\def\section{\@startsection{section}{1}%
  \z@{1.1\linespacing\@plus\linespacing}{.8\linespacing}%
  {\normalfont\Large\scshape\centering}}
\theoremstyle{plain}
\newtheorem*{conj*}{Root Groups Conjecture}
\newtheorem*{thm1.2}{(1.2) Theorem}
\newtheorem*{thm1.3}{(1.3) Theorem}
\newtheorem*{thm1.4}{(1.4) Theorem}
\newtheorem*{prop*}{Proposition}
\newtheorem*{thm*}{Theorem}
\def\eroman{\etype{\roman}}
\newtheorem{prop}{Proposition}[section]
\newtheorem{thm}[prop]{Theorem}
\newtheorem{cor}[prop]{Corollary}
\newtheorem{lemma}[prop]{Lemma}
\theoremstyle{definition}
\newtheorem{Def}[prop]{Definition}
\newtheorem*{Def*}{Definition}
\newtheorem{Defsnot}[prop]{Definitions and notation}
\newtheorem*{notation*}{Notation}
\newtheorem{remark}[prop]{Remark}
\newcommand{\etype}[1]{\renewcommand{\labelenumi}{(#1{enumi})}}
\newcommand{\ff}{\mathbb{F}}
\newcommand{\frakR}{\mathfrak{R}}
\newcommand{\ga}{\alpha}
\newcommand{\gb}{\beta}
\newcommand{\gc}{\gamma}
\newcommand{\gd}{\delta}
\newcommand{\gl}{\lambda}
\newcommand{\gt}{\tau}
\newcommand{\charc}{{\rm char}}
\newcommand{\half}{\frac 12}
\numberwithin{equation}{section}
\begin{document}
\title{Two axes in non-associative algebras with a Frobenius form }
\author[Yoav Segev]{Yoav Segev}

\address{Yoav Segev \\
         Department of Mathematics \\
         Ben-Gurion University \\
         Beer-Sheva 84105 \\
         Israel}
\email{yoavs@math.bgu.ac.il}

\thanks{}

\keywords{idempotent, axis, primitive,  Jordan
type, axial algebra, derivation}

\subjclass[2020]{Primary: 17C27, 17A99, Secondary: 17C10 }

\begin{abstract}
Throughout this paper $A$ is a commutative non-associative algebra over a field $\ff$ of characteristic not $2.$ In addition $A$ possesses a Frobenius form.  We obtain detailed information about the multiplication in $A$ given two axes of type half in $A.$

\end{abstract}
\date{\today}
\maketitle

\tableofcontents

\section{Introduction}
Throughout this paper $A$ is a commutative non-associative algebra over a field $\ff$ of characteristic not $2.$ Assume that $(\ ,\ )$ is a Frobenius form on $A,$ that is, $(\ ,\ )$ is a symmetric bilinear map such that $(xy,z)=(x,yz),$ $\forall x,y,z\in A.$ We denote by $\frakR$ the radical of $(\ ,\ )$. Recall that for $z\in A,\ \gl\in\ff,$ we denote $A_{\gl}(z)=\{x\in A\mid zx=\gl x\}.$  We direct the reader to  Definition \ref{axis} below and the beginning of \S\ref{SQP} for the definitions required for the introduction.

This paper was inspired by \cite{d}, which in turn was inspired by \cite{gss}.  These two papers reveal an unexpected, and rather amazing structure in primitive axial algebras of Jordan type half.  Since axial algebras are related to $3$-transposition groups, on the one hand, and to the Griess Monster algebra, on the other hand, there must be something real there (:)).

Now when one looks carefully on the arguments in \cite{d}, one realizes that all that is being used  for an idempotent $c\in A$ is that: $P_c(x,y)=0,\ \forall x,y\in A$ iff $c$ is an axis (where in characteristic $3$ one has to also require $(c,c)=1,$ see Lemma \ref{QP}).   However, some of the arguments in \cite{d} are sleek, smart (and short), but I thought there should be more elementary (albeit longer) ones.  Thus, I decided to embark on the tedious (though rewarding) effort to make the arguments elementary, and, on the way, there are some very interesting identities that are obtained.  

Indeed, Theorem \ref{ISI} introduces  new identities, and there are many additional identities in this paper which we believe will become very useful, in particular, for giving an elementary proof that finitely generated three transposition groups are finite.  Theorem \ref{Ider}  is new as well.

\begin{Defsnot}\label{axis}
\begin{enumerate}
\item
Recall that an idempotent $c\in A$ is called {\it a primitive axis of Jordan type $\half,$} if $A=\ff c\oplus A_0(c)\oplus A_{\half}(c),$ and $(c,c)=1.$ Further, $c$ satisfies the following {\it fusion rules:} for $x_0,y_0\in A_0(c)$ and $x_{\half},y_{\half}\in A_{\half}(c),$ $x_0y_0\in A_0(c),\ x_0y_{\half}\in A_{\half}(c),$ and $x_{\half}y_{\half}\in \ff c+A_0(c).$ For short we will say that $c$ is an {\it axis}.

\item 
Throughout the paper, $a, b$ are fixed axes in $A.$ 

\item
Throughout the paper, except for \S\ref{SQP}, when we write $x=\ga_x a+x_0+x_{\half},\  x\in A,$ we mean $\ga_x\in\ff,\ x_{\gl}\in A_{\gl}(a),\ \gl\in\{0,\half\}.$  Lemma \ref{O} below shows that $\ga_x=(a,x).$ 

\item 
Throughout the paper, except for \S\ref{SQP}, when we write $x_{\gl},\ \gl\in\{0,\half\},$ (but there is no element $x$ around) we just mean $x_{\gl}\in A_{\gl}(a).$

\item 
Recall that for an axis $a$   the Miyamoto map $\gt_a: x\mapsto (a,x)a+x_0-x_{\half},$ where $x=(a,x)a+x_0+x_{\half}\in A,$ is an automorphism of order at most $2$ of $A.$
\end{enumerate}
\end{Defsnot}

\begin{thm}[Theorem \ref{der}]\label{Ider}
Let $a, b$ be axes.  Then the following are equivalent
\begin{enumerate}\eroman
\item
$[L_a,L_b]$ is a derivation on $A.$
\item 
\begin{enumerate}
\item 
$b_{\half}(x_0y_0)=(b_{\half}x_0)y_0+(b_{\half}y_0)x_0,\ \forall x_0,y_0\in A_0(a).$

\item
$\Big(b_{\half}(x_{\half}y_0)\Big)_0=(b_{\half}x_{\half})y_0-\Big((b_{\half}y_0)x_{\half}\Big)_0 ,$ for all $x_{\half}\in A_{\half}(a)$ and $y_0\in A_0(a).$

\item 
$b_{\half}(x_{\half}y_{\half})+(b_{\half}x_{\half} )y_{\half} +(b_{\half}y_{\half} )x_{\half}=$\\
$\half(b,y_{\half} )x_{\half} +\half(b,x_{\half} )y_{\half}+(a,x_{\half}y_{\half})b_{\half},$\\ 
for all $x_{\half}, y_{\half}\in A_{\half}(a).$
\end{enumerate}
\end{enumerate}
\end{thm}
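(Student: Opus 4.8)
Set $D:=[L_a,L_b]=L_aL_b-L_bL_a$, so $D(x)=a(bx)-b(ax)$. The plan is to test the derivation identity $D(xy)=D(x)y+xD(y)$ only on homogeneous arguments. Since $D$ is linear while both sides are bilinear and symmetric in $(x,y)$, and $A=\ff a\oplus A_0(a)\oplus A_{\half}(a)$, it suffices to verify the identity for the six unordered pairs drawn from $\{a\}\cup A_0(a)\cup A_{\half}(a)$: namely $(a,a)$, $(a,x_0)$, $(a,x_{\half})$, $(x_0,y_0)$, $(x_0,y_{\half})$, and $(x_{\half},y_{\half})$. I first record that distinct eigenspaces are orthogonal under the form: if $u\in A_{\gl}(a)$, $v\in A_{\gm}(a)$ with $\gl\neq\gm$, then $\gl(u,v)=(au,v)=(u,av)=\gm(u,v)$ gives $(u,v)=0$; with Lemma \ref{O} this yields $(a,x_{\half})=0=(b_0,x_{\half})$, hence $(b,x_{\half})=(b_{\half},x_{\half})$.

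The first step is to compute $D$ on each graded piece, using only idempotency of $a$, the fusion rules, and the fact that $L_a-\half$ scales $A_{\gl}(a)$ by $\gl-\half$. Writing $b=\ga_b a+b_0+b_{\half}$ and splitting all products by the fusion rules, I expect to obtain
\begin{gather*}
D(a)=-\tfrac14\,b_{\half},\qquad D(x_0)=\half\,b_{\half}x_0,\\
D(x_{\half})=\half(a,b_{\half}x_{\half})\,a-\half(b_{\half}x_{\half})_0 ,
\end{gather*}
the last from $D(x_{\half})=a(bx_{\half})-\half\,bx_{\half}$ after writing $bx_{\half}=\half\ga_bx_{\half}+b_0x_{\half}+b_{\half}x_{\half}$ and reading off the $\ff a$-, $A_0(a)$- and $A_{\half}(a)$-parts.

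With these formulas the three pairs involving $a$ should hold identically, independent of (a)--(c): each collapses, after substitution, to a cancellation forced by idempotency and the eigenvalue bookkeeping (e.g.\ $(a,a)$ is $D(a)=2aD(a)$, true since $aD(a)=-\tfrac18 b_{\half}$). The pair $(x_0,y_0)$ is the cleanest: $D(x_0y_0)$ and $D(x_0)y_0+x_0D(y_0)$ are $\half$ times the two sides of (a), so this case \emph{is} (a). For $(x_0,y_{\half})$ the product $x_0y_{\half}$ lies in $A_{\half}(a)$, so the identity splits into an $\ff a$-part and an $A_0(a)$-part; the $\ff a$-part is automatic because Frobenius gives $(a,b_{\half}(x_0y_{\half}))=\half(b_{\half}x_0,y_{\half})=(a,(b_{\half}x_0)y_{\half})$, while the $A_0(a)$-part rearranges into (b) (after the substitution $x_{\half}\mapsto y_{\half}$, $y_0\mapsto x_0$ in (b), using $(b_{\half}y_{\half})x_0=x_0(b_{\half}y_{\half})_0$).

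The pair $(x_{\half},y_{\half})$ is where the real work lies, and it will be the main obstacle. Now $x_{\half}y_{\half}\in\ff a+A_0(a)$, so one must split $x_{\half}y_{\half}=(a,x_{\half}y_{\half})a+(x_{\half}y_{\half})_0$ and apply $D$ to the two summands via the $D(a)$- and $D(x_0)$-formulas, while $D(x_{\half})y_{\half}$ and $x_{\half}D(y_{\half})$ are expanded through the $A_{\half}(a)$-formula. The key simplification is $(a,b_{\half}x_{\half})=\half(b_{\half},x_{\half})=\half(b,x_{\half})$ (and likewise for $y_{\half}$), from Frobenius and the orthogonality above; this turns the coefficients coming from $D(x_{\half})$, $D(y_{\half})$ into the $\half(b,x_{\half})$, $\half(b,y_{\half})$ on the right of (c). After substitution the identity becomes, up to the factor $2$,
\begin{equation*}
b_{\half}(x_{\half}y_{\half})_0+(b_{\half}x_{\half})_0\,y_{\half}+(b_{\half}y_{\half})_0\,x_{\half}=\tfrac14(b,x_{\half})y_{\half}+\tfrac14(b,y_{\half})x_{\half}+\half(a,x_{\half}y_{\half})b_{\half},
\end{equation*}
and expanding (c) itself the same way, splitting $b_{\half}(x_{\half}y_{\half})$, $(b_{\half}x_{\half})y_{\half}$, $(b_{\half}y_{\half})x_{\half}$ into $\ff a$- and $A_0(a)$-parts and again using $(a,b_{\half}x_{\half})=\half(b,x_{\half})$, shows this is exactly (c). Throughout, the delicate point is the consistent bookkeeping of the $\ff a$-, $A_0(a)$- and $A_{\half}(a)$-components together with the repeated passage between $(a,b_{\half}x_{\half})$ and $\half(b,x_{\half})$; once that is organised, matching components pair by pair gives (i) $\Leftrightarrow$ (ii) in both directions.
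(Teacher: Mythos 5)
Your proposal is correct and follows essentially the same route as the paper: both compute $[L_a,L_b]$ on the three graded pieces (your formulas for $D(a),D(x_0),D(x_{\half})$ agree with Lemma \ref{der1}, after the observation that the $(a,b)a+b_0$ part of $b$ contributes nothing), check the pairs involving $a$ and the $\ff a$-components directly via the Frobenius property, and identify the surviving componentwise conditions with (a), (b), (c). The only difference is organizational — you test Leibniz on fully homogeneous pairs, whereas the paper keeps one argument general and packages the case analysis into Lemmas \ref{der2}--\ref{der5} — but the content is the same.
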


The following result is proven in \S5, and is essentially a consequence of \cite[Theorem 1.2 and Proposition 4.3]{d}, but here we prove it using elementary means.

\begin{thm}[Theorem \ref{0012}]\label{I0012}
If $(a,b)\ne \frac 14,$ then
$b_{\half}(x_0y_0)=(b_{\half}x_0)y_0+ (b_{\half}y_0)x_0.$
\end{thm}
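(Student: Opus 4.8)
\noindent I would begin by recognizing that the asserted identity is exactly condition (ii)(a) of Theorem~\ref{Ider}: since $A$ is commutative, $(b_{\half}y_0)x_0=x_0(b_{\half}y_0)$, so the claim is that the ``derivation on $A_0(a)$'' relation
\[
D:=b_{\half}(x_0y_0)-(b_{\half}x_0)y_0-x_0(b_{\half}y_0)=0
\]
holds \emph{unconditionally} as soon as $(a,b)\ne\tfrac14$. By the fusion rules for $a$ every summand of $D$ lies in $A_{\half}(a)$ (e.g.\ $x_0y_0\in A_0(a)$ and $A_0(a)A_{\half}(a)\subseteq A_{\half}(a)$), so $D\in A_{\half}(a)$, equivalently $\gt_a(D)=-D$. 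The plan is to produce a scalar $\kappa$, vanishing precisely at $(a,b)=\tfrac14$, together with the relation $\kappa\,D=0$; then $(a,b)\ne\tfrac14$ forces $D=0$. The engine will be that $b$ is an axis, so by Lemma~\ref{QP} it satisfies the axis identity $P_b(u,v)=0$ for all $u,v$.

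The key input is the expansion of $P_b(x_0,y_0)=0$. I would first record the elementary products in the $a$-grading: writing $b=(a,b)a+b_0+b_{\half}$ and using $ax_0=ay_0=0$ gives $bx_0=b_0x_0+b_{\half}x_0$ and, by orthogonality of the $a$-Peirce decomposition, $(b,x_0)=(b_0,x_0)$ (similarly for $y_0$). Hence the $A_{\half}(b)$-part of $x_0$ is
\[
x_{0,\half}^{\,b}=2\bigl(bx_0-(b,x_0)b\bigr)=2\bigl(b_0x_0+b_{\half}x_0\bigr)-2(b_0,x_0)b ,
\]
and likewise for $y_0$. Substituting these (and re-expanding the outer occurrences of $b$) turns $P_b(x_0,y_0)=0$ into an explicit relation $R=0$ among the products $b(x_0y_0),(bx_0)y_0,(by_0)x_0,bx_0,by_0$ and their further expansions, all now written in $a$-eigenspace ingredients $a,b_0,b_{\half},x_0,y_0$.

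The final step is to extract the $A_{\half}(a)$-component of $R=0$. Since $\gt_a$ is an automorphism fixing $a$ and $A_0(a)$ pointwise and negating $A_{\half}(a)$, the $A_{\half}(a)$-part of any expression is its $\gt_a$-odd part $\tfrac12(z-\gt_a z)$, which because $b_{\half}$ is the only $A_{\half}(a)$-ingredient coincides with the part involving an odd number of $b_{\half}$-factors. Collecting this part I expect the linear-in-$b_{\half}$ contribution to assemble exactly into $\kappa\,D$, where the scalar $\kappa$ is generated by the $(a,b)a$ summand of $x_{0,\half}^{\,b}$ and $y_{0,\half}^{\,b}$ feeding, under $a\cdot a=a$ and $a\cdot b_{\half}=\half b_{\half}$, back into the $a$-components of $A_{\half}(a)A_{\half}(a)$-products; a direct count should give $\kappa$ proportional to $1-4(a,b)$. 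This both yields $D=0$ for $(a,b)\ne\tfrac14$ and recovers the consequence of \cite[Theorem~1.2 and Proposition~4.3]{d}.

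The main obstacle is the bookkeeping forced by the two competing Peirce gradings: $P_b$ is phrased in the $b$-grading while the conclusion lives in the $a$-grading, so $x_{0,\half}^{\,b},y_{0,\half}^{\,b}$ must be re-expanded in $a$-eigenspaces and the mixed products $b_{\half}(b_{\half}x_0)$, $(b_{\half}x_0)(b_{\half}y_0)$, and the $\ff a$-components of $A_{\half}(a)A_{\half}(a)$-products (which re-inject factors of $(a,\cdot)$) tracked carefully. Isolating the $\gt_a$-odd, $A_{\half}(a)$-homogeneous part, verifying that every term other than $\kappa\,D$ cancels (including any higher $b_{\half}$-degree contributions), and pinning down that $\kappa=c\,(1-4(a,b))$ with $c\ne0$, is the crux—precisely the elementary-but-long computation the paper advertises.
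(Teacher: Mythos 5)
Your setup is sound: $D:=b_{\half}(x_0y_0)-(b_{\half}x_0)y_0-x_0(b_{\half}y_0)$ does lie in $A_{\half}(a)$, the $A_{\half}(a)$-part of any expression in $a,b_0,b_{\half},x_0,y_0$ is its part of odd degree in $b_{\half}$, and the target is indeed a relation $\kappa D=0$ with $\kappa$ a nonzero multiple of $4(a,b)-1$ (this is exactly how the paper concludes). But there is a genuine gap at the crux: the $A_{\half}(a)$-component of the single identity $P_b(x_0,y_0)=0$ does \emph{not} assemble into $\kappa D$. Since $P_b$ is quadratic in $b$, its odd-in-$b_{\half}$ part contains the mixed quadratic terms $4(b_{\half}y_0)(x_0b_0)+4(b_{\half}x_0)(y_0b_0)$ together with $(b,y_0)b_{\half}x_0+(b,x_0)b_{\half}y_0+(b,x_0y_0)b_{\half}$; what you get is precisely Lemma \ref{P00}(1), namely
\[
b_{\half}(x_0y_0)-(b_{\half}x_0)y_0-(b_{\half}y_0)x_0=-4(b_{\half}y_0)(x_0b_0)-4(b_{\half}x_0)(y_0b_0)+(b,y_0)b_{\half}x_0+(b,x_0)b_{\half}y_0+(b,x_0y_0)b_{\half},
\]
which is one linear relation expressing $D$ in terms of new, independent unknowns, not $\kappa D=0$. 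No amount of bookkeeping inside this one identity will make $(b_{\half}x_0)(y_0b_0)$ cancel against the $D$-terms; it has to be eliminated by further input.

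The paper supplies that further input from three additional sources: (i) a second application of the axis identity for $b$, namely the $(\half,0)$-expansion $P_b(x_{\half},y_0)=0$ (Lemma \ref{P120}(1)) specialized at $x_{\half}=b_{\half}x_0$, which after simplification yields the auxiliary relation \eqref{eq0012a} tying $(b_{\half}x_0)(y_0b_0)$ and $((b_{\half}x_0)y_0)b_0$ back to $D$; (ii) the explicit two-generator product formulas of Theorem \ref{SI} (e.g.\ $(b_{\half}x_0)b_0=\frac14(b,x_0)b_{\half}$ and $2b_{\half}(b_{\half}x_0)=(a,b)(b,x_0)a+(a,b)x_0b_0$), themselves obtained from $P_b(\cdot,a)=0$; and (iii) the Seress-type identity $b(xy)-2b((by)x)=(bx)y-2(bx)(by)$ for the axis $b$, which underlies Lemma \ref{P00}(3), $((b_{\half}y_0)x_0)b_0+((b_{\half}x_0)y_0)b_0=\frac14(b,x_0y_0)b_{\half}$. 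Only after symmetrizing \eqref{eq0012a} in $x_0,y_0$ and substituting (iii) and Lemma \ref{P00}(2) do all the extra terms collapse, leaving $(4(a,b)-1)D=0$. So your strategy needs to be enlarged from "extract the odd part of $P_b(x_0,y_0)=0$" to "combine the odd parts of $P_b$ at several argument pairs, including derived arguments like $b_{\half}x_0$, together with the Seress identity for $b$"; as written, the proposed direct count of $\kappa$ from a single expansion cannot succeed.
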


\begin{thm}[Theorem \ref{SI}]\label{ISI}
Let $x_0\in A_0(a)$ and $x_{\frac 12}\in A_{\frac 12}(a),$ then
\begin{enumerate}
\item 
$(x_{\frac 12}b_0)b_0=\frac 12(1-(a,b))x_{\frac 12}b_0.$

\item 
 $(x_0b_0)b_0=\frac 12(1-(a,b))x_0b_0+\frac 12(b,x_0)b_0.$

 \item 
 $(x_0b_{\frac 12}) b_{\frac 12}=\frac 12(a,b)(b,x_0)a+\frac 12(a,b)x_0b_0.$

 \item
 \begin{enumerate}
 \item 
$ \Big(x_{\half}b_{\half} \Big)_0b_{\half} =\frac 14(a,b)(1-(a,b))x_{\half}-\half(a,b)x_{\half}b_0+\frac 14(b,x_{\half})b_{\half} .$
\item
$(x_{\half}b_{\half})b_{\half}= \frac 14(a,b)(1-(a,b))x_{\half}-\half(a,b)x_{\half}b_0+\half(b,x_{\half})b_{\half}.$ 
\end{enumerate}

 \item 
$(x_{\frac 12}b_0)b_{\frac 12}=\frac 14(1-(a,b))(b,x_{\frac 12})a+\frac 14(b,x_{\frac 12})b_0.$ 

  \item 
$(x_{\frac 12}b_{\frac 12})b_0=\frac 12(1-(a,b))\Big(b_{\frac 12}x_{\frac 12}\Big)_0+\frac 14 (b,x_{\frac 12})b_0.$

\item
$(x_0b_0)b_{\frac 12}=\frac 12(1-(a,b))b_{\frac 12}x_0+\frac 14(b,x_0)b_{\frac 12}.$

\item
$(x_0b_{\frac 12})b_0=\frac 14(b,x_0)b_{\frac 12}.\quad$ 
\end{enumerate}
\end{thm}

Theorem \ref{2thms} (which infact are two separtate theorems) is proven in \S 6 and \S 7 by elemntary means, compare with \cite[Theorem 1.2 and Proposition 4.3]{d}).

\begin{thm}[Theorems \ref{121212} and  \ref{thm120} respectively]\label{2thms}
Suppose $(a,b)\notin\{1,\frac 14\},$ then
\begin{enumerate}
 \item
$b_{\half}(x_{\half}y_{\half})+(b_{\half}x_{\half} )y_{\half} +(b_{\half}y_{\half} )x_{\half}=$\\
$\half(b,y_{\half} )x_{\half} +\half(b,x_{\half} )y_{\half}+(a,x_{\half}y_{\half})b_{\half}.$

\item
$\Big(b_{\half}(x_{\half}y_0 ) \Big)_0=(b_{\half}x_{\half})y_0-\Big((b_{\half}y_0)x_{\half}\Big)_0.$
\end{enumerate}
In particular, $[L_a,L_b]$ is a derivation.
\end{thm}

For case $(a,b)=1$, we have the following theorem, proven in \S 8 by elemntary means.
 
\begin{thm}
Assume $(a,b)=1.$  Then
\begin{enumerate}
\item 
If $b_0=0,$ then $[L_a,L_b]$ is a derivation.

\item 
If $\charc(\ff)\ne 3,$  then idetities (1) and (2) of Theorem \ref{2thms} hold.  In particular, $[L_a,L_b]$ is a derivation.
\end{enumerate}
\end{thm}

We left the case $(a,b)=\frac 14$ for future works.

\section{The polynomials $Q_c(x)$ and $P_c(x,y)$}\label{SQP}
Recall Definition \ref{axis}(2).     Let  $c\in A$ be an idempotent.  Define, as in \cite{d},
\[
\textstyle{Q_c(x) :=c(cx) - \half(cx + (c, x)c),}
\]
\[
P_c(x,y) := 4(cx)(cy) - (c, y)cx - (cy)x - (c, x)cy - (cx)y - (c, xy)c + c(xy).
\]

\begin{lemma}\label{O}
Let $z\in A,\ \gl,\gd\in\ff,\ x_{\gl}\in A_{\gl}(z),\ y_{\gd}\in A_{\gd}(z).$  If $\gd\ne\gl,$  then $(x_{\gl},y_{\gd})=0.$ In particular  $x=(a,x)a+x_0+x_{\half},$ where $x\in A,\ x_0\in A_0(a),\ x_{\half}\in A_{\half}(a).$
\end{lemma}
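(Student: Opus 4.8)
The plan is to deduce the orthogonality from the self-adjointness of the multiplication operator $L_z\colon x\mapsto zx$ with respect to the Frobenius form, which is immediate from the defining identity $(xy,z)=(x,yz)$ together with commutativity. Once $L_z$ is self-adjoint, eigenvectors for distinct eigenvalues are orthogonal by the usual argument, and the ``in particular'' clause is then a direct application to the axis $a$.

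First I would record that for any $u,v\in A$ one has $(zu,v)=(uz,v)=(u,zv)$, where the middle equality is commutativity and the last is the Frobenius identity. Applying this with $u=x_{\gl}$ and $v=y_{\gd}$ gives
\[
\gl(x_{\gl},y_{\gd})=(zx_{\gl},y_{\gd})=(x_{\gl},zy_{\gd})=\gd(x_{\gl},y_{\gd}),
\]
so that $(\gl-\gd)(x_{\gl},y_{\gd})=0$. Since $\gl\ne\gd$, the scalar $\gl-\gd$ is nonzero, hence invertible in $\ff$, which forces $(x_{\gl},y_{\gd})=0$.

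For the ``in particular'' statement I would specialize to $z=a$. As $a$ is an axis we have the decomposition $A=\ff a\oplus A_0(a)\oplus A_{\half}(a)$, so every $x\in A$ can be written as $x=\ga a+x_0+x_{\half}$ with $\ga\in\ff,\ x_0\in A_0(a),\ x_{\half}\in A_{\half}(a)$. Since $a$ is idempotent we have $a\in A_1(a)$, while $x_0$ and $x_{\half}$ lie in the eigenspaces for the distinct eigenvalues $0$ and $\half$; hence the orthogonality just proved yields $(a,x_0)=(a,x_{\half})=0$. Therefore $(a,x)=\ga(a,a)=\ga$, using $(a,a)=1$ from the definition of an axis, which identifies the coefficient $\ga$ as $(a,x)$.

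I expect no genuine obstacle here; the only point requiring care is the bookkeeping in the self-adjointness step, namely correctly combining commutativity with the one-sided Frobenius identity $(xy,z)=(x,yz)$ so as to transfer the factor $z$ from one argument of the form to the other.
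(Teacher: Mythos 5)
Your proof is correct and rests on the same idea as the paper's: moving $z$ across the Frobenius form (self-adjointness of $L_z$) to compare the two eigenvalues. The only difference is cosmetic — the paper divides by a nonzero eigenvalue and therefore splits into the cases $\gl=0$ and $0\notin\{\gl,\gd\}$, whereas your identity $(\gl-\gd)(x_{\gl},y_{\gd})=0$ handles both at once; your treatment of the ``in particular'' clause, using $a\in A_1(a)$ and $(a,a)=1$, is likewise what the paper intends.
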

\begin{proof}
If $\gl=0,$ then, by hypothesis, $\gd\ne 0,$ and $(x_{\gl},y_{\gd})=(x_{\gl}, \frac{1}{\gd}zy_{\gd})=\frac{1}{\gd}(zx_{\gl},y_{\gd})=\frac{1}{\gd}(0,y_{\gd})=0.$  Assume $0\notin\{\gl,\gd\}.$  Then
\[
\textstyle{(x_{\gl},y_{\gd})=(\frac{1}{\gl}zx_{\gl},\ y_{\gd})=(\frac{1}{\gl}x_{\gl},\ zy_{\gd})=\frac{\gd}{\gl}(x_{\gl},y_{\gd}).}
\]
Hence if $\gl\ne\gd,$ then $(x_{\gl},y_{\gd})= 0.$    
\end{proof}

\begin{lemma}[Serss' Lemma]\label{seress}
Let $a$ be an axis.  Then $a(xy)=(ax)y,$ for all $x\in A$ and $y\in \ff a+A_0(a).$
\end{lemma}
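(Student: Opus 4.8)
The plan is to exploit the fact that both sides of the identity $a(xy)=(ax)y$ are bilinear in $x$ and $y$, so that it suffices to verify the identity when $x$ and $y$ range over spanning sets of their respective domains. Using the eigenspace decomposition $A=\ff a\oplus A_0(a)\oplus A_{\half}(a)$ guaranteed by Definition \ref{axis}, I would write $x=\ga_x a+x_0+x_{\half}$ and reduce $x$ to one of the three homogeneous components $a$, $x_0\in A_0(a)$, or $x_{\half}\in A_{\half}(a)$. Since $y$ is constrained to lie in $\ff a+A_0(a)$, I would reduce $y$ to either $a$ or $y_0\in A_0(a)$. This leaves six cases.

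In each case the computation reduces to applying the eigenvalue relations $a\cdot a=a$, $ay_0=0$, $ax_{\half}=\half x_{\half}$ together with commutativity. For instance, when $x=x_{\half}$ and $y=a$, both sides equal $\frac14 x_{\half}$; when $x=a$ and $y=y_0$, both sides vanish. The two genuinely substantive cases are $x=x_0,\ y=y_0$ and $x=x_{\half},\ y=y_0$: here the left-hand side $a(xy)$ requires knowing in which eigenspace the product $xy$ lands, which is exactly what the fusion rules $A_0(a)A_0(a)\subseteq A_0(a)$ and $A_0(a)A_{\half}(a)\subseteq A_{\half}(a)$ supply. In the first, $x_0y_0\in A_0(a)$ forces $a(x_0y_0)=0=(ax_0)y_0$; in the second, $x_{\half}y_0\in A_{\half}(a)$ gives $a(x_{\half}y_0)=\half(x_{\half}y_0)=(ax_{\half})y_0$.

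There is no serious obstacle here, since the proof is a finite case analysis; but the point worth flagging is that the restriction $y\in\ff a+A_0(a)$ is essential and is precisely what makes the argument go through. Were $y$ allowed to lie in $A_{\half}(a)$, the product $xy$ could pick up a component in $\ff a$ via the remaining fusion rule $A_{\half}(a)A_{\half}(a)\subseteq\ff a+A_0(a)$, and then $a(xy)$ would acquire a term that $(ax)y$ cannot match. Thus the content of the lemma is that $\ff a+A_0(a)$ is exactly the part of $A$ through which $a$ associates, and the fusion rules enter only in the two mixed cases involving $A_0(a)$.
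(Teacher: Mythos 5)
Your proof is correct and is essentially the paper's argument: the paper likewise writes $x=(a,x)a+x_0+x_{\half}$ and $y=(a,y)a+y_0$, expands $xy$, and compares $a(xy)$ with $(ax)y$ using the eigenvalue relations and the fusion rules $A_0(a)A_0(a)\subseteq A_0(a)$, $A_0(a)A_{\half}(a)\subseteq A_{\half}(a)$. Organizing the verification as six bilinear cases rather than one expanded computation is only a cosmetic difference, and your closing remark about why $y\in A_{\half}(a)$ must be excluded is a correct observation, though not needed for the proof.
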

\begin{proof}
Write $y=(a,y)a+y_0,$  and $x=(a,x)a+x_0+x_{\half},$ with $x_0,y_0\in A_0(a)$ and $x_{\half}\in A_{\half}(a).$  Then
\[
\textstyle{xy=(a,x)(a,y)a+x_0y_0+\half (a,y)x_{\half}+x_{\half}y_0,}
\]
so
\[
\textstyle{a(xy)=(a,x)(a,y)a+\frac 14 (a,y)x_{\half}+\half x_{\half}y_0,}
\]
while
\[
\textstyle{(ax)y=((a,x)a+\half x_{\half})((a,y)a+y_0)=(a,x)(a,y)a+\frac 14(a,y)x_{\half}+\half x_{\half}y_0.}\qedhere
\]
\end{proof}
\begin{lemma}\label{QP}
Let $c\in A$ be an idempotent, then
\begin{enumerate} 
\item 
If $\charc(\ff)\ne 3,$ and $P_c(c,c)=0,$ then $(c,c)=1.$

\item 
The following conditions on $c$ are equivalent:
\begin{enumerate}
\item
$Q_c(x)=0,$ for all $x\in A.$

\item
$A=\ff c\oplus A_0(c)\oplus A_{\half}(c),$ and $(c,c)=1.$  
\end{enumerate}

\item 
Assume $(c,c)=1.$  Then
\begin{enumerate}
\item
$P_c(c,y)=4Q_c(y).$

\item 
$c$ is a primitive  axis of Jordan type $\half$ iff $P_c(x,y)=0,$ for all $x,y\in A.$
\end{enumerate}
\end{enumerate}
\end{lemma}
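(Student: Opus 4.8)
The plan is to prove the three parts in the order (1), (3)(a), (2), (3)(b), since each later part rests on the earlier ones. Throughout write $\gamma=(c,c)$ and recall $cc=c$.

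\emph{Part (1).} I would substitute $x=y=c$ into $P_c$. Using $cc=c$, every term becomes a scalar multiple of $c$, with the form entering only through $(c,c)=\gamma$ and $(c,cc)=(c,c)=\gamma$. A short bookkeeping gives $P_c(c,c)=3(1-\gamma)c$. As $c\neq 0$ and $\charc(\ff)\neq 3$, the hypothesis $P_c(c,c)=0$ forces $\gamma=1$.

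\emph{Part (3)(a).} Assume $\gamma=1$ and set $x=c$ in $P_c(x,y)$. Using $cc=c$, commutativity to replace $(cy)c$ by $c(cy)$, and the Frobenius identity $(c,cy)=(cc,y)=(c,y)$, the seven terms regroup as $4c(cy)-2(c,y)c-2cy$, which is visibly $4\bigl(c(cy)-\tfrac12 cy-\tfrac12(c,y)c\bigr)=4Q_c(y)$.

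\emph{Part (2).} Write $L_c\colon x\mapsto cx$ and $R\colon x\mapsto (c,x)c$. The identity $Q_c\equiv 0$ reads $L_c^2=\tfrac12 L_c+\tfrac12 R$. Evaluating at $c$ (equivalently, noting $Q_c(c)=\tfrac12(1-\gamma)c$) yields $\gamma=1$; then $(c,c)=1$ makes $R$ the projection onto $\ff c$ with kernel $c^\perp$, so $R$ vanishes on $c^\perp$. Since $L_c$ preserves $c^\perp$ (because $(c,cx)=(c,x)$), on $c^\perp$ the relation becomes $L_c(L_c-\tfrac12)=0$; as $\charc(\ff)\neq 2$ this polynomial has distinct roots and $L_c$ is diagonalizable on $c^\perp$ with eigenvalues $0,\tfrac12$. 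Combined with $A_0(c),A_{\half}(c)\subseteq c^\perp$ (Lemma \ref{O}), this gives $A=\ff c\oplus A_0(c)\oplus A_{\half}(c)$ (in particular $A_1(c)=\ff c$), i.e. (a)$\Rightarrow$(b). For (b)$\Rightarrow$(a) I would use that $Q_c$ is linear in $x$, so it suffices to check $Q_c=0$ on $c$, on $A_0(c)$, and on $A_{\half}(c)$ separately; each is immediate after recording $(c,x_\lambda)=0$ for $\lambda\in\{0,\half\}$ (again Lemma \ref{O}).

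\emph{Part (3)(b).} Assume $\gamma=1$. Since $P_c$ is symmetric and bilinear, it suffices to evaluate it on pairs drawn from $\{c\}\cup A_0(c)\cup A_{\half}(c)$. The pairs containing $c$ are governed by (3)(a): $P_c(c,y)=4Q_c(y)$, so $P_c(c,\cdot)\equiv 0$ is equivalent to $Q_c\equiv 0$, hence by (2) to the eigenspace decomposition. For $x_\lambda\in A_\lambda(c)$ and $y_\mu\in A_\mu(c)$ with $\lambda,\mu\in\{0,\half\}$, substituting $cx_\lambda=\lambda x_\lambda$, $cy_\mu=\mu y_\mu$ and $(c,x_\lambda)=(c,y_\mu)=0$ collapses $P_c(x_\lambda,y_\mu)$ to
\[
(4\lambda\mu-\lambda-\mu)\,(x_\lambda y_\mu)+\bigl(c(x_\lambda y_\mu)-(c,x_\lambda y_\mu)c\bigr).
\]
Writing $z=x_\lambda y_\mu=\alpha c+z_0+z_{\half}$ (Lemma \ref{O}), the bracket equals $\tfrac12 z_{\half}$; moreover $\alpha=(c,x_\lambda y_\mu)=(cx_\lambda,y_\mu)$ vanishes whenever $\lambda=0$. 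Hence $P_c(x_0,y_0)=\tfrac12(x_0y_0)_{\half}$, $P_c(x_0,y_{\half})=-\tfrac12(x_0y_{\half})_0$ and $P_c(x_{\half},y_{\half})=\tfrac12(x_{\half}y_{\half})_{\half}$, so the vanishing of these three expressions is exactly the three fusion rules, giving both directions of the equivalence.

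\emph{Main obstacle.} The delicate point is in (3)(b): one must check that $P_c=0$ forces the \emph{exact} fusion rules (each product lying in a single eigenspace) rather than merely in a sum of two. For the $A_0A_0$ and $A_0A_{\half}$ products this is precisely where the Frobenius identity is essential, since $(c,x_0y)=(cx_0,y)=0$ automatically removes any $\ff c$-component of the product, so that the single scalar identity $P_c(x_\lambda,y_\mu)=0$ carries all the information. Everything else is careful but routine expansion.
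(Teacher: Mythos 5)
Your proof is correct and takes essentially the same route as the paper's: the expansions of $P_c(c,c)$, of $P_c(c,y)$, and of $P_c(x_\lambda,y_\mu)$ on eigenvectors are exactly the computations the paper performs, with your uniform formula $(4\lambda\mu-\lambda-\mu)x_\lambda y_\mu+\tfrac12\bigl(x_\lambda y_\mu\bigr)_{\half}$ merely packaging the paper's three case-by-case expansions for the pairs $(x_0,y_0)$, $(x_0,y_{\half})$, $(x_{\half},y_{\half})$. The only cosmetic difference is in (2)(a)$\Rightarrow$(b), where you invoke diagonalizability of $L_c$ on $c^\perp$ via the separable polynomial $t(t-\tfrac12)$ instead of writing out, as the paper does, the explicit splitting $x=(c,x)c-2\bigl(cx-\tfrac12 x-\tfrac12(c,x)c\bigr)+\bigl(2cx-2(c,x)c\bigr)$; the two arguments are the same identity in different clothing.
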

\begin{proof}
(1):\ By definition,
$P_c(c,c)=4c - (c, c)c - c - (c,c)c - c - (c, c)c + c=3c-3(c,c)c,$ so (1) holds.
\medskip

\noindent
(2)  $(a)\implies (b):$ Assume  that $Q_c(x)=0,$ for all $x\in A.$  Then $cx-\half x-\half(c,x)c\in A_0(c)$, and $c(2cx-2(c,x)c)=2(c(cx))-2(c,x)c)=cx+(c,x)c-2(c,x)c=cx-(c,x)c.$  Thus $2cx-2(c,x)c\in A_{\half}(c).$  Also $x=(c,x) c-2 (cx-\half x-\half(c,x)c)+2cx-2(c,x)c.$  Hence, $A=\ff c\oplus A_0(c)\oplus A_{\half}(c).$

Since $0=Q_c(c)=\half c-\half(c,c)c,$ we see that $(c,c)=1.$
\medskip

\noindent
$(b)\implies (a):$  Assume that $(b)$ holds.  Since $(c,c)=1,$ $Q_c(c)=0.$   Let $x_0\in A_0(c)$ and $x_{\half}\in A_{\half}(c).$ By Lemma \ref{O}, $(c,x_0)=(c,x_{\half})=0.$ Now $Q_c(x_0)=c(cx_0) - \half(cx_0) -\half (c, x_0)c)=0,$  and  $Q_c(x_{\half})=c(cx_{\half}) - \half cx_{\half} + (c, x_{\half})c)=\frac 14 x_{\half}-\frac 14 x_{\half}-\half(c,x_{\half})c=0.$  
\medskip

\noindent
(3)(a):\  We have
\begin{align*}
P_c(c,y)&=4c(cy) - (c, y)c - (cy)c - (c,c)cy - cy - (c, cy)c + c(cy)\\
&=4c(cy)-2cy-2(c,y)c=4Q_c(y),
\end{align*}
because $(c,c)=1,$ and $(c,cy)=(c,y),$ since $(\ ,\ )$ is Frobenius.
\medskip

\noindent
(b):\ Assume that $A=\ff c\oplus A_0(c)\oplus A_{\half}(c).$
Let $x_0, y_0\in A_0(c)$ and $x_{\half}, y_{\half}\in A_{\half}(c).$ By Lemma \ref{O}, $(c,x_0)=(c,y_0)=(c,x_{\half})=(c,y_{\half})=0.$  Also since $(\ ,\ )$ is Frobenius, $(c,x_0y_0)=0=(c,x_0y_{\half}).$ Hence
\begin{equation}\label{eqP00}
\begin{aligned}
P_c(x_0,y_0) &=4(cx_0)(cy_0) - (c, y_0)cx_0 - (cy_0)x_0\\
&- (c, x_0)cy_0 - (cx_0)y_0 - (c, x_0y_0)c + c(x_0y_0)\\
&=0-0-0-0-0-0+c(x_0y_0)\\
&=c(x_0y_0),
\end{aligned}
\end{equation}
\begin{equation}\label{eqP120}
\begin{aligned}
P_c(x_0,y_{\half}) &= 4(cx_0)(cy_{\half}) - (c, y_{\half})cx_0 - (cy_{\half})x_0 - (c, x_0)cy_{\half}\\
&- (cx_0)y_{\half} - (c, x_0y_{\half})c + c(x_0y_{\half})\\
&=\textstyle{0-0-\half x_0y_{\half}-0-0-0+c(x_0y_{\half})}\\
&=\textstyle{-\half x_0y_{\half}+c(x_0y_{\half}),}
\end{aligned}
\end{equation}
and
\begin{equation}\label{eqP1212a}
\begin{aligned}
 P_c(x_{\half},y_{\half})  &= 4(cx_{\half})(cy_{\half}) - (c, y_{\half})cx_{\half} - (cy_{\half})x_{\half} \\
&- (c, x_{\half})cy_{\half}- (cx_{\half})y_{\half}   - (c, x_{\half}y_{\half})c + c(x_{\half}y_{\half})\\
&=\textstyle{x_{\half}y_{\half} - 0 - \half y_{\half}x_{\half} - 0 - \half x_{\half}y_{\half}}\\
&- (c,x_{\half}y_{\half})c + c(x_{\half}y_{\half})\\
&=- (c,x_{\half}y_{\half})c + c(x_{\half}y_{\half}).
\end{aligned}
\end{equation}
Assume first that $P_c(x,y)=0,$ for all $x,y\in A.$ By part (a) and by (2), the equivalent condition (2)(b) holds.  
It remains to show the fusion rules.    But these follow from \eqref{eqP00}, \eqref{eqP120} and \eqref{eqP1212a}.

Assume next that $c$ is a primitive  axis of Jordan type $\half$.  By (2), and by part (a), $P_c(c,y)=P_c(x,c)=0,$ for all $x,y\in A.$  

Using Lemma \ref{O} and the fusion rule $x_{\half}y_{\half}\in\ff c+A_0(c),$ we see that $c(x_{\half}y_{\half})=(c,x_{\half}y_{\half})c.$

Now, by the fusion rules and by \eqref{eqP00}, \eqref{eqP120} and \eqref{eqP1212a}, $P(x_0,y_0)=P(x_0,y_{\half})=P(y_{\half},x_0)=P(x_{\half},y_{\half})=0,$ for all $x_0,y_0\in A_0(c)$ and $x_{\half},y_{\half}\in A_{\half}(c).$ Hence, $P(x,y)=0,$ for all $x,y\in A.$
\end{proof}

The following lemma will be used in the next section.

\begin{lemma}\label{x0x12}
Let $a\in A$ be an axis. For $x\in A,$ write $x=(a,x)a+x_0+x_{\half},$ with $x_0\in A_0(a)$ and $x_{\half}\in A_{\half}(a).$
\begin{enumerate}
\item
$x_0=x-2ax+(a,x)a.$
\item
$x_{\half}=2ax-2(a,x)a.$
\end{enumerate}
\end{lemma}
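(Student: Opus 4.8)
The plan is to apply left multiplication by $a$ to the given decomposition and read off the $\half$-part directly. Because $a$ is an axis we have $A=\ff a\oplus A_0(a)\oplus A_{\half}(a)$ with $a$ idempotent, and by definition $A_0(a)$ and $A_{\half}(a)$ are the $0$- and $\half$-eigenspaces of the operator $L_a\colon y\mapsto ay$. That the scalar appearing in the $a$-component is exactly $(a,x)$ is Lemma \ref{O}, so writing $x=(a,x)a+x_0+x_{\half}$ with $x_0\in A_0(a)$ and $x_{\half}\in A_{\half}(a)$ is legitimate.

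First I would multiply the identity $x=(a,x)a+x_0+x_{\half}$ on the left by $a$. Using $aa=a$, $ax_0=0$, and $ax_{\half}=\half x_{\half}$, this collapses to
\[
ax=(a,x)a+\half x_{\half}.
\]
Rearranging gives $\half x_{\half}=ax-(a,x)a$, i.e. $x_{\half}=2ax-2(a,x)a$, which is part (1).

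For part (2) I would simply isolate $x_0$ from the original decomposition, $x_0=x-(a,x)a-x_{\half}$, and substitute the value of $x_{\half}$ just found:
\[
x_0=x-(a,x)a-\big(2ax-2(a,x)a\big)=x-2ax+(a,x)a.
\]
There is no genuine obstacle here: the whole argument is a one-line application of $L_a$ together with the eigenvalue equations defining $A_0(a)$ and $A_{\half}(a)$, followed by solving a triangular linear system for the two components. The only point requiring any care is that the coefficient of $a$ is $(a,x)$ rather than an independent scalar, which is precisely what Lemma \ref{O} supplies.
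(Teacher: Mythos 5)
Your proof is correct and is essentially the same as the paper's: both compute $ax=(a,x)a+\half x_{\half}$ from the eigenspace decomposition and then solve for $x_{\half}$ and $x_0$ by rearranging. No further comment is needed.
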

\begin{proof}
 (1) $ax=(a,x)a+\half x_{\half}.$ Hence
$x+(a,x)a-2ax=(a,x)a+x_0+x_{\half}+(a,x)a-2(a,x)a-x_{\half} =x_0.$
\medskip

\noindent
(2)  $2ax-2(a,x)a=2(a,x)a+x_{\half}-2(a,x)a=x_{\half}.$  
\end{proof}
\section{Derivations}\label{DER}
We recall Definition \ref{axis}. In particular, in this section $a$ is an axis. 
\begin{Def}
\begin{enumerate}
\item
A derivation on $A$ is a $\ff$-linear map $D : A \to A$ that satisfies Leibniz's law:  $D ( x y ) = x D ( y ) + D ( x ) y.$
\item
For $x,y\in A$ we denote $L_x(y)=xy,$ and $[L_x,L_y]=L_xL_y-L_yL_x,$ so  $[L_x,L_y](z)=x(yz)-y(xz).$
\end{enumerate}
\end{Def}

\begin{lemma}\label{der1}
Let $v\in A_{\half}(a)$  and $x\in A.$ Write $x=(a,x)a+x_0+x_{\half},$ then
\begin{enumerate}
\item
$[L_a,L_v](x)=-2v(ax) + \half(vx + (v, x)a + (a, x)v).$

\item
$[L_a,L_v](x)=\half(v,x)a-\half vx_{\half}-\frac{1}{4}(a,x)v+\half vx_0.$

\item
$\Big([L_a,L_v](x)\Big)_0=-\half \Big(vx_{\half}\Big)_0.$

\item
$\Big([L_a,L_v](x)\Big)_{\half}=\frac{1}{2}vx_0-\frac{1}{4}(a,x)v .$

\item
$[L_a,L_v](x_0)=\half (vx_0).$

\item
$[L_a,L_v](x_{\half})=\half (v,x_{\half})a-\half vx_{\half}.$
\end{enumerate}
\end{lemma}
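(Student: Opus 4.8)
The plan is to prove all six identities for the commutator $[L_a,L_v]$ by direct computation, starting from the definition $[L_a,L_v](x)=a(vx)-v(ax)$ and exploiting the fact that $v\in A_{\half}(a)$, so $av=\half v$. The natural order is to establish (1) first as a master formula, then derive (2) by decomposing $x$, read off (3) and (4) as the $A_0(a)$- and $A_{\half}(a)$-components of (2), and finally obtain (5) and (6) as the special cases $x=x_0$ and $x=x_{\half}$.

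First I would compute (1). Since $ax=(a,x)a+\half x_{\half}$, applying $L_v$ gives $v(ax)=(a,x)va+\half v x_{\half}=\half(a,x)v+\half vx_{\half}$. For the term $a(vx)$, the strategy is to use that $c=a$ is an axis, so $Q_a=0$ by Lemma \ref{QP}, which lets me rewrite $a(av x)$ or, more directly, I would apply the relation $a(vx)$ via the identity coming from $Q_a(vx)=0$, namely $a(a(vx))=\half\bigl(a(vx)+(a,vx)a\bigr)$. A cleaner route: write $[L_a,L_v](x)=a(vx)-v(ax)$ and substitute $a(vx)=2a(a(vx))-$ correction terms using $Q_a$, but the shortest path is simply to expand $-2v(ax)$ and $\half(vx+(v,x)a+(a,x)v)$ and verify they sum to $a(vx)-v(ax)$. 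The key input is that $a(vx)=\half vx+\half(a,vx)a+(\text{a term in }A_0)$; since $(a,vx)=(av,x)=\half(v,x)$ by the Frobenius property, the constant coefficient of $a$ works out to $\half(v,x)$. I expect this bookkeeping to be the main obstacle, since one must carefully track which fusion rule controls $a(vx)$.

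Once (1) is in hand, (2) follows by substituting $ax=(a,x)a+\half x_{\half}$ so that $-2v(ax)=-(a,x)v-vx_{\half}$, and writing $vx=v\bigl((a,x)a+x_0+x_{\half}\bigr)=\half(a,x)v+vx_0+vx_{\half}$. Collecting the terms from (1), the $vx_{\half}$ contributions combine to $-vx_{\half}+\half vx_{\half}=-\half vx_{\half}$, the $v$-coefficient from $-(a,x)v$, $\frac14(a,x)v$ and $\half\cdot(a,x)v$ (the last via $\half(a,x)v$ in (1)) should collapse to $-\frac14(a,x)v$, and the $(v,x)a$ term gives $\half(v,x)a$, yielding the stated formula $\half(v,x)a-\half vx_{\half}-\frac14(a,x)v+\half vx_0$.

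For (3) and (4) I would apply the fusion rules to decompose formula (2) into eigencomponents. By Lemma \ref{O}, $(v,x)a\in\ff a$ and is orthogonal to the eigenspaces; the fusion rules give $vx_0\in A_{\half}(a)$ (so it contributes only to the $\half$-part) and $vx_{\half}\in\ff a+A_0(a)$ (so only its $A_0(a)$-component survives in the $0$-part). Thus the $A_0(a)$-component of (2) is $-\half(vx_{\half})_0$, giving (3), and the $A_{\half}(a)$-component is $\half vx_0-\frac14(a,x)v$, giving (4). Finally (5) is the case $x=x_0$ (so $(a,x)=0$, $x_{\half}=0$, $(v,x)=0$ by Lemma \ref{O}), reducing (2) to $\half vx_0$; and (6) is the case $x=x_{\half}$ (so $(a,x)=0$, $x_0=0$), reducing (2) to $\half(v,x_{\half})a-\half vx_{\half}$. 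These last two require only noting the vanishing of the irrelevant terms via orthogonality of distinct eigenspaces.
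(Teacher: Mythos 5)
Your proposal is correct, and parts (2)--(6) proceed exactly as in the paper: the paper likewise obtains (2) by substituting $ax=(a,x)a+\half x_{\half}$ and $vx=\half(a,x)v+vx_0+vx_{\half}$ into (1), and then simply reads off (3)--(6) from (2) using the fusion rules and Lemma \ref{O}, which is precisely your bookkeeping. The only genuine divergence is in part (1). The paper derives the master formula via the Miyamoto involution: it writes $ax=\frac14(x-x^{\gt_a})+(a,x)a$, applies this to $vx$, and uses that $\gt_a$ is an automorphism with $v^{\gt_a}=-v$ to get $(vx)^{\gt_a}=-vx^{\gt_a}$, whence $a(vx)=-v(ax)+\half(vx+(v,x)a+(a,x)v)$ in three lines. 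You instead verify (1) by eigencomponent matching: using $(a,vx)=(av,x)=\half(v,x)$ and the fusion rules ($vx_0\in A_{\half}(a)$, $vx_{\half}\in\ff a+A_0(a)$, so that $vx_{\half}=\half(v,x)a+(vx)_0$), one checks that both sides of (1) have the same $\ff a$-, $A_0(a)$- and $A_{\half}(a)$-parts; I confirmed this closes up. Your route is more computational and makes (1) essentially equivalent to (2) rather than logically prior to it, but it avoids invoking the automorphism property of $\gt_a$ explicitly; the paper's use of $\gt_a$ buys a shorter, substitution-free derivation of (1). Either way the lemma is proved.
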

\begin{proof}
(1)\ First note that $ax=(a,x)a+\half x_{\half}=\frac{1}{4} (x - x^{\tau_a}) + (a, x)a,$ hence
\begin{align*}
a(vx) &= \textstyle{\frac{1}{4} (vx - (vx)^{\tau_a}) + (a, vx)a=\frac{1}{4} (vx + vx^{\tau_a}) +\half (v, x)a}\\
&= \textstyle{\frac{1}{4} (vx + v(x + 4(a,x)a - 4(ax))) + \half (v, x)a}\\
&= \textstyle{\frac{1}{4} (vx + vx + 2(a, x)v - 4v(ax)) + \half (v, x)a}\\
&=\textstyle{-v(ax) + \half (vx + (v, x)a + (a, x)v).}
\end{align*}
\medskip

\noindent
(2)  By (1),
\begin{align*}
[L_a,L_v](x)&=\textstyle{-2v(ax)+\half(vx + (v, x)a + (a, x)v)}\\
&=\textstyle{-2v((a,x)a+\half x_{\half})+\half(\half(a,x)v+vx_0+vx_{\half}+(v,x)a+(a,x)v)}\\
&=\textstyle{-(a,x)v-vx_{\half}+\frac{3}{4}(a,x)v+\half vx_0+\half vx_{\half}+\half (v,x)a}\\
&=\textstyle{\half(v,x)a-\half vx_{\half}-\frac{1}{4}(a,x)v+\half vx_0.}
\end{align*}
\medskip

\noindent
(3), (4), (5) and (6) follow from (2).
\end{proof}

\begin{lemma}\label{der2}
Notation as in Lemma \ref{der1},
\begin{enumerate}
\item
$\Big([L_a,L_v](xy_0)\Big)_{\half}=\half v(y_0x_0).$

\item
$\Big(\big([L_a,L_v](x)\big)y_0\Big)_{\half}= \half(vx_0)y_0-\frac{1}{4}(a,x)vy_0.$

\item
$\Big(\big([L_a,L_v](y_0)\big)x\Big)_{\half}=\half (vy_0)x_0+\frac{1}{4}(a,x)vy_0.$

\item
$\Big([L_a,L_v](xy_0)\Big)_{\half}=\Big(([L_a,L_v](x))y_0\Big)_{\half}+\Big(([L_a,L_v](y_0))x\Big)_{\half},$ iff
\[
v(x_0y_0)=(vx_0)y_0+(vy_0)x_0.
\]
\end{enumerate}

\end{lemma}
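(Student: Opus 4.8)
The plan is to prove parts (1)–(3) by directly computing $\half$-components through the fusion rules together with the projection formulas of Lemma \ref{der1}, and then to deduce (4) as a purely formal consequence by adding (2) and (3) and comparing the result with (1). Nothing deep is needed beyond Lemma \ref{der1}, the orthogonality of eigenspaces (Lemma \ref{O}), the Frobenius property, and the fusion rules for the axis $a$.

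For (1), I would substitute $z:=xy_0$ into the formula $\big([L_a,L_v](z)\big)_{\half}=\half vz_0-\tfrac14(a,z)v$ of Lemma \ref{der1}(4), the only work being to identify $z_0$ and $(a,z)$. Since $ay_0=0$ we have $xy_0=x_0y_0+x_{\half}y_0$, and the fusion rules place $x_0y_0\in A_0(a)$ and $x_{\half}y_0\in A_{\half}(a)$, so $(xy_0)_0=x_0y_0$. Moreover $(a,xy_0)=(ax,y_0)=0$ by the Frobenius property and Lemma \ref{O}, because $ax=(a,x)a+\half x_{\half}$ is orthogonal to $y_0\in A_0(a)$. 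Substituting yields $\big([L_a,L_v](xy_0)\big)_{\half}=\half v(x_0y_0)=\half v(y_0x_0)$ by commutativity.

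For (2) and (3) the same eigenspace-projection technique applies. In (2), writing $w:=[L_a,L_v](x)=(a,w)a+w_0+w_{\half}$ and using $ay_0=0$ gives $wy_0=w_0y_0+w_{\half}y_0$ with $w_0y_0\in A_0(a)$ and $w_{\half}y_0\in A_{\half}(a)$, so $(wy_0)_{\half}=w_{\half}y_0$; then $w_{\half}=\half vx_0-\tfrac14(a,x)v$ from Lemma \ref{der1}(4) gives the claim at once. In (3), Lemma \ref{der1}(5) first gives $[L_a,L_v](y_0)=\half vy_0$, which by fusion lies entirely in $A_{\half}(a)$; setting $u:=\half vy_0$ and expanding $ux=(a,x)(au)+ux_0+ux_{\half}$, I use $au=\half u$, the fusion rule $ux_0\in A_{\half}(a)$, and crucially $ux_{\half}\in\ff a+A_0(a)$ (so $ux_{\half}$ is invisible to the $\half$-projection), obtaining $(ux)_{\half}=\half(a,x)u+ux_0=\tfrac14(a,x)vy_0+\half(vy_0)x_0$.

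Finally, for (4) I would add the expressions from (2) and (3); the two $\tfrac14(a,x)vy_0$ terms cancel, leaving the right-hand side equal to $\half(vx_0)y_0+\half(vy_0)x_0$. Comparing with the left-hand side $\half v(x_0y_0)$ from (1) and cancelling the factor $\half$, which is legitimate since $\charc(\ff)\ne2$, shows that the equality of $\half$-components holds exactly when $v(x_0y_0)=(vx_0)y_0+(vy_0)x_0$. I do not expect a genuine obstacle here: the whole argument is bookkeeping driven by Lemma \ref{der1} and the fusion rules. The one point requiring care is part (3), where it is essential that $vy_0$ is a pure $\half$-eigenvector (so $[L_a,L_v](y_0)$ carries no $a$- or $0$-component) and that the $A_{\half}(a)\!\cdot\!A_{\half}(a)$ product $ux_{\half}$ lands in $\ff a+A_0(a)$ and hence drops out of the $\half$-projection.
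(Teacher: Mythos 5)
Your proposal is correct and follows essentially the same route as the paper: parts (1)--(3) are obtained by projecting onto the $\half$-eigenspace via Lemma \ref{der1}(4) and (5) together with the fusion rules (and the Frobenius identity to see $(a,xy_0)=0$), and part (4) is the formal comparison of (1) with the sum of (2) and (3), where the $\tfrac14(a,x)vy_0$ terms cancel. Nothing to add.
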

\begin{proof}
(1)  This follows from Lemma \ref{der1}(4), since $(a, xy_0)=0,$ and $\Big(xy_0\Big)_0=x_0y_0.$
\medskip

\noindent
(2)  Using Lemma \ref{der1}(4) we get,
\[
\textstyle{\Big(\big([L_a,L_v](x)\big)y_0\Big)_{\half}=\Big([L_a,L_v](x)\Big)_{\half}y_0= \half(vx_0)y_0-\frac{1}{4}(a,x)vy_0.}
\]
\medskip

\noindent
(3) By Lemma \ref{der1}(5),
$
\Big(\big([L_a,L_v](y_0)\big)x\Big)_{\half}=\half \Big((vy_0)x\Big)_{\half}=\\
\half((vy_0)x_0+\half (a,x)vy_0)=\half (vy_0)x_0+\frac{1}{4}(a,x)vy_0.
$
\medskip

\noindent
(4) Follows from (1), (2) and (3).
\end{proof}

\begin{lemma}\label{der3}
Notation as in Lemma \ref{der1},
\begin{enumerate}
\item
$\Big([L_a,L_v](xy_0)\Big)_0=-\half \Big(v(x_{\half}y_0)\Big)_0.$

\item
$\Big(\big([L_a,L_v](y_0)\big)x\Big)_0=\half\Big((vy_0)x_{\half}\Big)_0.$


\item
$\Big(\big([L_a,L_v](x)\big)y_0\Big)_0=-\half (vx_{\half})y_0.$

\item
$\Big([L_a,L_v](xy_0)\Big)_0=\Big([L_a,L_v](x)\Big)_0y_0+\Big([L_a,L_v](y_0)x\Big)_0,$ iff
\[
 \Big(v(x_{\half}y_0)\Big)_0=(vx_{\half})y_0- \Big((vy_0)x_{\half}\Big)_0. 
\]
\end{enumerate}
\end{lemma}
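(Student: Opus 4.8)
The plan is to mirror the proof of Lemma~\ref{der2}, simply trading the $\half$-component bookkeeping used there for $0$-component bookkeeping here, and then feeding the three resulting identities into the final equivalence. The only inputs required are the component formulas of Lemma~\ref{der1}(3),(5) together with the fusion rules of Definition~\ref{axis}(1); I expect the whole argument to be a routine, if careful, tracking of eigenspace placements.

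First I would prove (1). Since $y_0\in A_0(a)$ and $x=(a,x)a+x_0+x_{\half}$, the term $(a,x)(ay_0)$ vanishes, so $xy_0=x_0y_0+x_{\half}y_0$, and the fusion rules give $x_0y_0\in A_0(a)$ and $x_{\half}y_0\in A_{\half}(a)$, whence $(xy_0)_{\half}=x_{\half}y_0$. Applying Lemma~\ref{der1}(3) to the element $xy_0$ then yields $\Big([L_a,L_v](xy_0)\Big)_0=-\half\Big(v(xy_0)_{\half}\Big)_0=-\half\Big(v(x_{\half}y_0)\Big)_0$, which is (1).

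Next I would handle (2) and (3). For (2), Lemma~\ref{der1}(5) gives $[L_a,L_v](y_0)=\half(vy_0)$; writing $w:=vy_0\in A_{\half}(a)$ (fusion $\half\cdot 0$) and expanding $wx$, I note that $wa=\half w$ and $wx_0$ both lie in $A_{\half}(a)$, so only $wx_{\half}\in\ff a+A_0(a)$ contributes to the $0$-component, giving $\Big([L_a,L_v](y_0)\,x\Big)_0=\half(wx)_0=\half\Big((vy_0)x_{\half}\Big)_0$. For (3), setting $z:=[L_a,L_v](x)$, the product $zy_0$ has $0$-part coming only from $z_0\in A_0(a)$, so $(zy_0)_0=z_0y_0=\Big([L_a,L_v](x)\Big)_0y_0=-\half(vx_{\half})_0\,y_0$ by Lemma~\ref{der1}(3). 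The one mildly delicate point is that the asserted right-hand side is written as $-\half(vx_{\half})y_0$ with no explicit $0$-subscript on $vx_{\half}$: since the fusion rule places $vx_{\half}\in\ff a+A_0(a)$ and $ay_0=0$, one has $(vx_{\half})y_0=(vx_{\half})_0\,y_0$, so the two expressions coincide (and already lie in $A_0(a)$).

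Finally, (4) is pure substitution: plugging (1) into the left-hand side and (3),(2) into the two right-hand summands, the derivation-type identity $\Big([L_a,L_v](xy_0)\Big)_0=\Big([L_a,L_v](x)\Big)_0y_0+\Big([L_a,L_v](y_0)\,x\Big)_0$ becomes $-\half\Big(v(x_{\half}y_0)\Big)_0=-\half(vx_{\half})y_0+\half\Big((vy_0)x_{\half}\Big)_0$, and clearing the common factor $-\half$ produces exactly $\Big(v(x_{\half}y_0)\Big)_0=(vx_{\half})y_0-\Big((vy_0)x_{\half}\Big)_0$. There is no genuine obstacle; the only step demanding attention is the consistent identification of fusion components throughout, in particular the equality $(vx_{\half})y_0=(vx_{\half})_0\,y_0$ invoked in (3).
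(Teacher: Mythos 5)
Your proposal is correct and follows essentially the same route as the paper: each component identity comes from Lemma \ref{der1}(3) and (5) together with the fusion-rule identification $\Big(xy_0\Big)_{\half}=x_{\half}y_0$, and part (4) is the same substitution. The observation that $(vx_{\half})y_0=\Big(vx_{\half}\Big)_0y_0$ because $ay_0=0$ is exactly the step the paper uses implicitly in its proof of (3).
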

\begin{proof}
(1)  This follows from Lemma \ref{der1}(3), since $\Big(xy_0\Big)_{\half}=x_{\half}y_0.$
\medskip

\noindent
(2) By Lemma \ref{der1}(5), $\Big(\big([L_a,L_v](y_0)\big) x\Big)_0=\half\Big((vy_0)x\Big)_0=\half\Big((vy_0)x_{\half}\Big)_0.$
\medskip

\noindent
(3) By Lemma \ref{der1}(3), $\Big(\big([L_a,L_v](x)\big)y_0\Big)_0=\Big([L_a,L_v](x)\Big)_0y_0=-\half \Big(vx_{\half}\Big)_0y_0=-\half(vx_{\half})y_0.$
\medskip

\noindent
(4) Follows from (1), (2) and (3).

\end{proof}

\begin{lemma}\label{der4}
Notation as in Lemma \ref{der1}.
\begin{enumerate}
\item
$\Big([L_a,L_v](xy_{\half})\Big)_{\half}=\half v(x_{\half}y_{\half})-\half(a,x_{\half}y_{\half})v.$

\item
$\Big(\big([L_a,L_v](x)\big)y_{\half}\Big)_{\half}=\frac{1}{4}(v,x)y_{\half}-\half(vx_{\half})y_{\half}.$

\item
$\Big(\big([L_a,L_v](y_{\half})\big)x\Big)_{\half}=\frac{1}{4}(v,y)x_{\half}-\half(vy_{\half})x_{\half}.$

\item
$\Big([L_a,L_v](xy_{\half})\Big)_{\half}=\Big(\big([L_a,L_v](x)\big)y_{\half}\Big)_{\half}+\Big(\big([L_a,L_v](y_{\half})\big)x\Big)_{\half},$ iff
\[
\textstyle{v(x_{\half}y_{\half})+(vx_{\half})y_{\half}+(vy_{\half})x_{\half} =\half(v,x)y_{\half}+\half(v,y)x_{\half}+(a,x_{\half}y_{\half})v.}
\]
%
%
\end{enumerate}
\end{lemma}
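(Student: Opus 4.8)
The plan is to prove (1), (2) and (3) by extracting the relevant eigencomponents using the formulas already recorded in Lemma \ref{der1}, and then to obtain (4) as a purely formal consequence of the first three parts. The only facts needed beyond Lemma \ref{der1} are the fusion rules of Definition \ref{axis}(1), the relations $aw_{\half}=\half w_{\half}$ and $aw_0=0$ for $w_{\half}\in A_{\half}(a),\ w_0\in A_0(a)$, and the observation (via Lemma \ref{O}) that for any $z\in A$ the $a$-coefficient of $z$ equals $(a,z)$.

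For (1), I would apply Lemma \ref{der1}(4) to the element $z=xy_{\half}$, giving $\big([L_a,L_v](xy_{\half})\big)_{\half}=\half vz_0-\frac14(a,z)v$. Expanding $xy_{\half}=(a,x)(ay_{\half})+x_0y_{\half}+x_{\half}y_{\half}$ and invoking the fusion rules, the terms $(a,x)(ay_{\half})=\half(a,x)y_{\half}$ and $x_0y_{\half}$ both lie in $A_{\half}(a)$, so the only contribution to $\ff a\oplus A_0(a)$ comes from $x_{\half}y_{\half}$; hence $z_0=(x_{\half}y_{\half})_0$ and $(a,z)=(a,x_{\half}y_{\half})$. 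Finally I would rewrite $vz_0$ using $v(x_{\half}y_{\half})=(a,x_{\half}y_{\half})va+v(x_{\half}y_{\half})_0=\half(a,x_{\half}y_{\half})v+v(x_{\half}y_{\half})_0$, which substitutes to give exactly $\half v(x_{\half}y_{\half})-\half(a,x_{\half}y_{\half})v$.

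For (2) and (3) the mechanism is the same decompose-multiply-recombine device. In (2) I would take $w=[L_a,L_v](x)$ from Lemma \ref{der1}(2), read off $w_0=-\half(vx_{\half})_0$ and $(a,w)=\half(v,x)-\half(a,vx_{\half})$, and compute the $A_{\half}(a)$-part of $wy_{\half}$ as $\half(a,w)y_{\half}+w_0y_{\half}$ (the $A_{\half}(a)$-part of $w$ times $y_{\half}$ lands in $\ff a\oplus A_0(a)$ and is discarded). Reassembling $(vx_{\half})y_{\half}=\half(a,vx_{\half})y_{\half}+(vx_{\half})_0y_{\half}$ converts the answer into $\frac14(v,x)y_{\half}-\half(vx_{\half})y_{\half}$. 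Part (3) is the mirror image: since $[L_a,L_v](y_{\half})=\half(v,y_{\half})a-\half vy_{\half}$ lies in $\ff a\oplus A_0(a)$ by Lemma \ref{der1}(6), multiplying by $x$ and extracting the $A_{\half}(a)$-part is again a short fusion computation, and one uses $(v,y)=(v,y_{\half})$ because $(v,a)=(v,y_0)=0$ by Lemma \ref{O}.

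Finally, (4) requires no new multiplication: substituting the expressions from (1), (2) and (3) into the equation $\big([L_a,L_v](xy_{\half})\big)_{\half}=\big(([L_a,L_v](x))y_{\half}\big)_{\half}+\big(([L_a,L_v](y_{\half}))x\big)_{\half}$, clearing the common factor $\half$, and transposing the three quadratic terms to one side yields precisely the stated identity. I expect the only genuine bookkeeping hazard to be the repeated passage between a product such as $vx_{\half}$ or $x_{\half}y_{\half}$ (which lies in $\ff a\oplus A_0(a)$) and its $A_0(a)$-component, governed by the single rule that the $a$-part contributes a factor $\half$ upon further multiplication by an element of $A_{\half}(a)$; keeping the two normalizations $\half$ and $\frac14$ straight across the three parts is where a coefficient error would most easily creep in.
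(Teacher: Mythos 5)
Your proposal is correct and follows essentially the same route as the paper: each of (1)--(3) is read off from the corresponding part of Lemma \ref{der1} together with the fusion rules (with exactly the conversions $vz_0=vz-\half(a,z)v$ and $(vx_{\half})y_{\half}=\half(a,vx_{\half})y_{\half}+(vx_{\half})_0y_{\half}$ you describe), and (4) is the formal consequence of substituting and clearing the factor $\half$. The coefficients all check out, so no further comment is needed.
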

\begin{proof}
(1)  By Lemma \ref{der1}(4),
\begin{gather*}
\textstyle{\Big([L_a,L_v](xy_{\half})\Big)_{\half}=\frac{1}{2}v\Big(xy_{\half}\Big)_0-\frac{1}{4}(a,xy_{\half})v=\half v\Big(x_{\half}y_{\half}\Big)_0-\frac{1}{4}(a,x_{\half}y_{\half})v}\\
=\textstyle{\half v(x_{\half}y_{\half})-\half(a,x_{\half}y_{\half})av-\frac{1}{4}(a,x_{\half}y_{\half})v=\half v(x_{\half}y_{\half})-\half(a,x_{\half}y_{\half})v.}
\end{gather*}
\medskip

\noindent
(2) By Lemma \ref{der1}(2),\\
$\Big(\big([L_a,L_v](x)\big)y_{\half}\Big)_{\half}=(\half(v,x)a-\half vx_{\half})y_{\half}=\frac 14 (v,x)y_{\half}-\half (vx_{\half})y_{\half}.$
\medskip

\noindent
(3) By Lemma \ref{der1}(6),
\[
\textstyle{\Big([L_a,L_v](y_{\half})x\Big)_{\half}=(\half(v, y_{\half})a-\half vy_{\half})x_{\half}=\frac{1}{4}(v,y)x_{\half}-\half(vy_{\half})x_{\half}.}
\]
\medskip

\noindent
(4)  Follows from (1), (2) and (3).
%
%
\end{proof}

\begin{lemma}\label{der5}
Notation as in Lemma \ref{der1},
\begin{enumerate}
\item
$\Big([L_a,L_v](xy_{\half})\Big)_0=-\frac 14 (a,x)\Big(vy_{\half}\Big)_0-\half\Big(v(x_0y_{\half})\Big)_0$

\item
$\Big(\big([L_a,L_v](x)\big)y_{\half}\Big)_0=\frac{1}{2}\Big((vx_0)y_{\half}\Big)_0-\frac 14 (a,x) \Big(vy_{\half}\Big)_0. $

\item
$\Big(\big([L_a,L_v](y_{\half})\big)x\Big)_0=-\half (vy_{\half})x_0.$

\item
$\Big([L_a,L_v](xy_{\half})\Big)_0=\Big(\big([L_a,L_v](x)\big)y_{\half}\Big)_0+\Big(\big([L_a,L_v](y_{\half})\big)x\Big)_0,$ iff
\[
\Big(v (y_{\half}x_0) \Big)_0=(vy_{\half}  )x_0-\Big((vx_0)y_{\half}\Big)_0   .
\]
(Note that is exactly  the same condition of Lemma \ref{der3}(4).)
\end{enumerate}
\end{lemma}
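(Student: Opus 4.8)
The plan is to establish (1)--(3) by direct substitution into the formulas of Lemma \ref{der1}, following the same pattern as the parallel Lemmas \ref{der2}--\ref{der4}, and then to read off the equivalence in (4) by comparing (1) with the sum of (2) and (3).

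First I would prove (1) by applying Lemma \ref{der1}(3) to the element $xy_{\half}$ in place of $x$, which gives $\big([L_a,L_v](xy_{\half})\big)_0=-\half\big(v(xy_{\half})_{\half}\big)_0$. The only computation needed is the $A_{\half}(a)$-component of $xy_{\half}$: writing $x=(a,x)a+x_0+x_{\half}$ and using $ay_{\half}=\half y_{\half}$ together with the fusion rules $x_0y_{\half}\in A_{\half}(a)$ and $x_{\half}y_{\half}\in\ff a+A_0(a)$, one obtains $(xy_{\half})_{\half}=\half(a,x)y_{\half}+x_0y_{\half}$, and substituting this in yields the claimed identity.

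Next, for (2) I would decompose $[L_a,L_v](x)$ into its $\ff a$-, $A_0(a)$- and $A_{\half}(a)$-components; by Lemma \ref{der1}(4) the $A_{\half}(a)$-component is $w_{\half}=\half vx_0-\frac14(a,x)v$. Upon right multiplication by $y_{\half}$, the $\ff a$- and $A_0(a)$-components land in $\ff a+A_{\half}(a)$ (via $ay_{\half}=\half y_{\half}$ and the rule $A_0(a)A_{\half}(a)\subseteq A_{\half}(a)$) and hence contribute nothing to the $0$-part, so only $w_{\half}y_{\half}\in\ff a+A_0(a)$ matters; extracting its $A_0(a)$-part gives (2). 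For (3) I would start from Lemma \ref{der1}(6), $[L_a,L_v](y_{\half})=\half(v,y_{\half})a-\half vy_{\half}$, and multiply on the right by $x$. The term $\half(v,y_{\half})ax$ lies in $\ff a+A_{\half}(a)$ and so has no $0$-part; for $-\half(vy_{\half})x$, since $vy_{\half}\in\ff a+A_0(a)$ and $ax_0=0$, the only contribution to $A_0(a)$ is $(vy_{\half})_0x_0$, which because $ax_0=0$ equals $(vy_{\half})x_0$, producing (3).

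Finally, for (4) I would equate the formula from (1) with the sum of those from (2) and (3). The common term $-\frac14(a,x)(vy_{\half})_0$ cancels, and multiplying through by $-2$ turns the derivation identity into $(v(x_0y_{\half}))_0=(vy_{\half})x_0-((vx_0)y_{\half})_0$; since $A$ is commutative we may write $x_0y_{\half}=y_{\half}x_0$, which is exactly the displayed condition and, after renaming, coincides with the condition of Lemma \ref{der3}(4). I expect the only real obstacle to be the eigenspace bookkeeping in (2) and (3)---keeping straight which fusion products fall into $\ff a+A_0(a)$ and which into $\ff a+A_{\half}(a)$---and in particular noticing the small simplification $(vy_{\half})_0x_0=(vy_{\half})x_0$ in (3) that puts the answer into the stated form.
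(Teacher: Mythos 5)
Your proposal is correct and follows essentially the same route as the paper: parts (1)--(3) are obtained by substituting into Lemma \ref{der1}(3), (4) and (6) respectively, with the same fusion-rule bookkeeping (including the observation $\bigl((vy_{\half})x\bigr)_0=(vy_{\half})x_0$ in (3)), and (4) follows by comparing (1) with the sum of (2) and (3) exactly as you describe.
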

\begin{proof}
(1) By Lemma \ref{der1}(3),
\begin{gather*}
\textstyle{\Big([L_a,L_v](xy_{\half})\Big)_0=-\half \Big(v\Big(xy_{\half}\Big)_{\half}\Big)_0=}\\
\textstyle{-\half \Big(v((a,x)a+x_0)y_{\half})\Big)_0=
-\frac 14 (a,x)\Big(vy_{\half}\Big)_0-\half\Big(v(x_0y_{\half})\Big)_0.}\\
\end{gather*}
\medskip

\noindent
(2)  By Lemma \ref{der1}(4),
\begin{gather*}
\Big(\big([L_a,L_v](x)\big)y_{\half}\Big)_0=\Big(\Big([L_a,L_v](x)\big)\Big)_{\half}y_{\half}\Big)_0\\
=\textstyle{\Big((\frac{1}{2}vx_0-\frac{1}{4}(a,x)v)y_{\half}\Big)_0=\frac{1}{2}\Big((vx_0)y_{\half}\Big)_0-\frac 14 (a,x) \Big(vy_{\half}\Big)_0.}
\end{gather*}
\medskip

\noindent
(3)  By Lemma \ref{der1}(6),
\begin{gather*}
\textstyle{\Big(\big([L_a,L_v](y_{\half})\big)x\Big)_0=\Big(( \half (v,y_{\half})a-\half vy_{\half})x\Big)_0}\\
=\textstyle{-\half \Big((vy_{\half})x\Big)_0=-\half(vy_{\half})x_0.}
\end{gather*}
\medskip

\noindent
(4) This is immediate from (1), (2) and (3).
%
\end{proof}

\begin{thm}\label{der}
Let $a, b$ be axes.  Then the following are equivalent
\begin{enumerate}\eroman
\item
$[L_a,L_b]$ is a derivation on $A.$

\item 
\begin{enumerate}
\item 
$b_{\half}(x_0y_0)=(b_{\half}x_0)y_0+(b_{\half}y_0)x_0,\ \forall x_0,y_0\in A_0(a).$

\item
$\Big(b_{\half}(x_{\half}y_0)\Big)_0=(b_{\half}x_{\half})y_0-\Big((b_{\half}y_0)x_{\half}\Big)_0 ,$ for all $x_{\half}\in A_{\half}(a)$ and $y_0\in A_0(a).$

\item 
$b_{\half}(x_{\half}y_{\half})+(b_{\half}x_{\half} )y_{\half} +(b_{\half}y_{\half} )x_{\half}=$\\
$\half(b,y_{\half} )x_{\half} +\half(b,x_{\half} )y_{\half}+(a,x_{\half}y_{\half})b_{\half},$\\ 
for all $x_{\half}, y_{\half}\in A_{\half}(a).$
\end{enumerate}
\end{enumerate}
\end{thm}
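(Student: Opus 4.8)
The plan is to strip $[L_a,L_b]$ down to $[L_a,L_{b_{\half}}]$ and then to test the Leibniz rule one homogeneous pair at a time, reading conditions (a)--(c) directly off Lemmas~\ref{der2}--\ref{der5} with $v=b_{\half}$. \textbf{Step 1: reduction to $b_{\half}$.} Write $b=(a,b)a+b_0+b_{\half}$. Since $(u,w)\mapsto[L_u,L_w]$ is bilinear and $[L_a,L_a]=0$, it suffices to kill the $b_0$--term. For $y_0\in A_0(a)$ and any $x\in A$, Seress's Lemma~\ref{seress} (applied with second argument $y_0\in\ff a+A_0(a)$) gives $a(y_0x)=a(xy_0)=(ax)y_0=y_0(ax)$, the last equality by commutativity; hence $[L_a,L_{y_0}]=0$, and in particular $[L_a,L_{b_0}]=0$. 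Therefore $D:=[L_a,L_b]=[L_a,L_{b_{\half}}]$, which is exactly the operator for which Lemmas~\ref{der1}--\ref{der5} are stated; from now on $v:=b_{\half}$.

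\textbf{Step 2: enumeration of pairs.} $D$ is a derivation iff $D(xy)=D(x)y+xD(y)$ for all $x,y\in A$. This identity is bilinear and, by commutativity, symmetric in $x,y$, so it is enough to verify it on unordered pairs from the spanning set $\{a\}\cup A_0(a)\cup A_{\half}(a)$. The pair $(a,a)$ is automatic: a direct computation (or Lemma~\ref{der1}(2) with $x=a$) gives $D(a)=-\frac14 b_{\half}\in A_{\half}(a)$, so $aD(a)=\half D(a)$ and $D(a\cdot a)=D(a)=2aD(a)=D(a)a+aD(a)$. Every other pair has an entry in $A_0(a)$ or in $A_{\half}(a)$; taking $x$ arbitrary and $y=y_0\in A_0(a)$ covers the pairs $(a,y_0),(x_0,y_0),(x_{\half},y_0)$ via Lemmas~\ref{der2} and~\ref{der3}, while taking $x$ arbitrary and $y=y_{\half}\in A_{\half}(a)$ covers $(a,y_{\half}),(x_0,y_{\half}),(x_{\half},y_{\half})$ via Lemmas~\ref{der4} and~\ref{der5}.

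\textbf{Step 3: componentwise reading.} For each such pair I project $D(xy)=D(x)y+xD(y)$ onto the three summands of $A=\ff a\oplus A_0(a)\oplus A_{\half}(a)$. When $y=y_0\in A_0(a)$, the $A_{\half}(a)$--projection is by Lemma~\ref{der2}(4) equivalent to condition (a), and the $A_0(a)$--projection is by Lemma~\ref{der3}(4) equivalent to condition (b). When $y=y_{\half}\in A_{\half}(a)$, the $A_{\half}(a)$--projection is by Lemma~\ref{der4}(4) equivalent to condition (c), and the $A_0(a)$--projection is by Lemma~\ref{der5}(4) once more equivalent to condition (b) (the two coincide, as remarked after Lemma~\ref{der5}). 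Letting $x$ run over all of $A$ promotes these to the stated quantifications over $x_0\in A_0(a)$ and $x_{\half}\in A_{\half}(a)$ (the cases $x=a,x_0$ in the $A_{\half}$--projection, and $x=a,x_{\half}$ in the $A_0$--projection, reduce to $0=0$ by Lemma~\ref{O}); to match Lemma~\ref{der4}(4) with (c) one uses $(b_{\half},x_{\half})=(b,x_{\half})$ and $(b_{\half},y_{\half})=(b,y_{\half})$, which hold by Lemma~\ref{O} because $(a,x_{\half})=(b_0,x_{\half})=0$. Assembling the equivalences shows $D$ is a derivation iff (a), (b), (c) all hold.

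\textbf{Main obstacle.} The one piece not supplied by Lemmas~\ref{der2}--\ref{der5} is the $\ff a$--component of each Leibniz identity, and this is where the (short) remaining work lies. The clean input is that $D$ is skew--adjoint for the Frobenius form, i.e.\ $(Dx,y)=-(x,Dy)$ --- a one--line consequence of the Frobenius identity --- which yields $(a,D(xy))=-(Da,xy)=\frac14(b_{\half},xy)$. One must then check, pair by pair, that $(a,D(x)y+xD(y))$ equals the same scalar; in every case the fusion rules together with Lemma~\ref{O} force each term to vanish or cancel, so the $\ff a$--projection imposes no further condition. With this settled, Step~3 completes the proof.
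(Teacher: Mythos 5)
Your proposal is correct and follows essentially the same route as the paper: reduce $[L_a,L_b]$ to $[L_a,L_{b_{\half}}]$ via Seress's Lemma, dispose of the pairs involving $a$ and of the $\ff a$-projection, and read conditions (a)--(c) off the $A_0(a)$- and $A_{\half}(a)$-projections using Lemmas~\ref{der2}--\ref{der5}(4). The only (minor) deviation is that you obtain $(a,D(xy))=\frac14(b_{\half},xy)$ from the skew-adjointness of $D=[L_a,L_{b_{\half}}]$ with respect to the Frobenius form, which is a slightly slicker shortcut than the paper's direct evaluation via Lemma~\ref{der1}(2), though the matching computation for $(a,D(x)y+xD(y))$ that you defer is exactly the one the paper carries out.
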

\begin{proof}
First, note that
\begin{equation}\label{lal0}
[L_a,L_{x_0}]=0,\text{ for all }x_0\in A_0(a),
\end{equation}
because by Seress' Lemma \ref{seress}, $[L_a,L_{x_0}](x)=a(x_0x)-x_0(ax)=0,$ for all $x\in A.$  Hence, since $[L_a,L_a]=0,$ we see that  for $x\in A,$ $[L_a, L_x]$ is a derivation iff $[L_a,L_{x_{\half}}]$ is a derivation.     
Let $v\in A_{\half}(a).$ By Lemma \ref{der1}(5\&6),
\begin{equation*}
\begin{aligned}
&\textstyle{[L_a,L_v](a)=a(av)-v(aa)=-\frac 14v,}\\
&\textstyle{[L_a,L_v](x_{\half})=a(vx_{\half})-\half vx_{\half},\text{ and }
[L_a,L_v](x_0)=\half vx_0.}
\end{aligned}
\end{equation*}
Hence,
\begin{gather*}
\textstyle{\big([L_a,L_v](x_0)\big)a+\big([L_a,L_v](a)\big)x_0=
\frac 14 vx_0-\frac 14 vx_0=0=[L_a,L_v](ax_0).}\\
\end{gather*}
Also,
\begin{gather*}
\textstyle{\big([L_a,L_v](a)\big)x_{\half}+a\big([L_a,L_v](x_{\half})\big)=-\frac 14 vx_{\half}+\half a(vx_{\half})=[L_a,L_v](ax_{\half}).}
\end{gather*}
Finally, 
\[
\textstyle{[L_a,L_v](a)=-\frac 18 v-\frac 18 v=a\big([L_a,L_v](a)\big)+a\big([L_a,L_v](a)\big)=[L_a,L_v](aa).}
\]
Thus $[L_a,L_v](xa)=x[L_a,L_v](a)+a[L_a,L_v](x).$
\medskip

Next we show that
\[
(a,[L_a,L_v](xy))=(a, \left([L_a,L_v](x)\right)y)+(a, \left([L_a,L_v](y)\right)x).
\]
By Lemma \ref{der1}(2),
\[
\textstyle{(a, [L_a,L_v](x))=\half(v,x)-\half(a,vx_{\half})=\frac 14(v,x),}
\]
so 
\begin{gather*}
\textstyle{(a, [L_a,L_v](xy))=\frac 14(v,xy)=\frac 14(v,\half(a,x)y_{\half}+\half(a,y)x_{\half}+x_0y_{\half}+y_0x_{\half})}\\
\textstyle{=\frac 18(a,x)(v,y)+\frac 18(a,y)(v,x)+\frac 14(v,x_0y_{\half})+\frac 14(v,y_0x_{\half}).}\\
\end{gather*}
Also, using Lemma \ref{der1}(2),
\begin{gather*}
\textstyle{(a,\left([L_a,L_v](x)\right)y)=(y,a\big([L_a,L_v](x)\big))=(y,\frac 14(v,x_{\half})a-\frac 18(a,x)v+\frac 14(vx_0))}\\
\textstyle{=\frac 14(a,y)(v,x)-\frac 18(a,x)(v,y)+\frac 14(y_{\half},vx_0).}
\end{gather*}
so
\begin{gather*}
\textstyle{(a,\big([L_a,L_v](x)\big)y)+(a,\big([L_a,L_v](y)\big)x)=\frac 14(a,y)(v,x)-\frac 18(a,x)(v,y)+\frac 14(y_{\half},vx_0)}\\
\textstyle{+\frac 14(a,x)(v,y)-\frac 18(a,y)(v,x)+\frac 14(x_{\half},vy_0)=}\\
\textstyle{\frac 18(a,x)(v,y)+\frac 18(a,y)(v,x)+\frac 14(x_{\half},vy_0)+\frac 14(y_{\half},vx_0).}
\end{gather*}

It remains to show that $\Big([L_a,L_{b_{\half}}](xy_{\gd})\Big)_{\gl}=\Big(\big([L_a,L_{b_{\half}}](x)\big)y_{\gd}\Big)_{\gl}+\Big(\big([L_a,L_{b_{\half}}](y_{\gd})\big)x\Big)_{\gl},$  for $\gl,\gd\in\{0,\half\},$ is equivalent to (ii) (a), (b) and (c).  But this follows from Lemma  \ref{der2}, \ref{der3}, \ref{der4} and \ref{der5}.
\end{proof}

\section{Obtaining identities in $A$}

We recall Definition \ref{axis}.  In particular $a$ and $b$ are axes.   

\begin{lemma}\label{Q}
\begin{enumerate}
\item
Let $x\in A.$  Then,
\begin{gather*}
0=(a,b)ax+(a,b)(b,x)ab-2(a,b)a(bx)+b_0x +(b,x)bb_0\\
 -2(bx)b_0+b_{\half}x+(b,x)bb_{\half}-2(bx)b_{\half}.
\end{gather*}

\item
$\Big(b_{\half}^2\Big)_0=(a,b)\Big[b_0^2+\Big(b_{\half}^2\Big)_0\Big].$

\item
\begin{enumerate}
\item
$(b_{\half},b_{\half})=(b,b_{\half})=2(a,b)(1-(a,b)).$

\item
$(b,b_0)=(1-(a,b))^2.$

\item
$b_0^2=(1-(a,b))b_0$ and $\Big(b_{\half}^2\Big)_0=(a,b)b_0.$

\item
$b_{\half}^2=(a,b)(1-(a,b))a+(a,b)b_0.$
\item
$b_{\half}b_0=\half(1-(a,b))b_{\half}.$
\end{enumerate}

\item
$2\Big(b_{\half}(b_{\half}x_0)\Big)_0=x_0b_0+(b,x_0)b_0-2(x_0b_0)b_0 .$
 
\item
\begin{enumerate}
\item
$(1-(a,b))\Big(b_{\half}x_{\half} \Big)_0-2\Big(b_{\half}x_{\half} \Big)_0b_0 +(b,x_{\half})b_0-2\Big(b_{\half}(x_{\half}b_0) \Big)_0=0.$

\item
$\half(a,b)(1-(a,b))x_{\half}-2\Big(b_{\half}x_{\half}\Big)_0b_{\half} +\half(b,x_{\half})b_{\half}
-2(x_{\half}b_0)b_0+(1-2(a,b))x_{\half}b_0=0.$
\end{enumerate}
\end{enumerate}
\end{lemma}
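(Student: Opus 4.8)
The plan is to extract everything from two facts about $b$: it is idempotent, so $b^2=b$, and it is an axis, so $Q_b(x)=0$ for all $x$ by Lemma \ref{QP}(2). Part (1) is nothing but the identity $-2Q_b(x)=0$ rewritten after inserting the $a$-eigendecomposition $b=(a,b)a+b_0+b_{\half}$; the value of doing so is that, although $-2Q_b(x)$ is literally zero, its nine summands no longer each lie in a single $a$-eigenspace, so projecting the rewritten form onto $\ff a$, $A_0(a)$ and $A_{\half}(a)$ forces genuine cancellations and yields the identities (2), (4), (5).

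For part (1) I would regroup the nine terms. Using $a^2=a$, $ab_0=0$ and $ab_{\half}=\half b_{\half}$, one checks $(a,b)ax+b_0x+b_{\half}x=bx$; that $(a,b)(b,x)ab+(b,x)bb_0+(b,x)bb_{\half}=(b,x)b^2=(b,x)b$; and that $-2[(a,b)a(bx)+(bx)b_0+(bx)b_{\half}]=-2\,b(bx)$. Hence the right-hand side collapses to $bx+(b,x)b-2b(bx)=-2Q_b(x)=0$.

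Before projecting I would record the three components of $b^2=b$. Expanding $b^2=[(a,b)a+b_0+b_{\half}]^2$ and using the $a$-fusion rules $b_0^2\in A_0(a)$, $b_{\half}^2\in\ff a+A_0(a)$, $b_0b_{\half}\in A_{\half}(a)$, the $\ff a$-, $A_0(a)$- and $A_{\half}(a)$-components give $(a,b_{\half}^2)=(a,b)(1-(a,b))$, the \emph{square relation} $b_0^2+(b_{\half}^2)_0=b_0$, and part 3(e), $b_0b_{\half}=\half(1-(a,b))b_{\half}$. Part (2) is then the $A_0(a)$-component of part (1) at $x=a$: there $bx=ab=(a,b)a+\half b_{\half}$, so the only surviving contributions are $(a,b)b_0^2+(a,b)(b_{\half}^2)_0-(b_{\half}^2)_0$, which is exactly $(a,b)[b_0^2+(b_{\half}^2)_0]-(b_{\half}^2)_0=0$. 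Feeding in the square relation gives $(b_{\half}^2)_0=(a,b)b_0$, hence $b_0^2=(1-(a,b))b_0$ (part 3(c)), and reassembling $b_{\half}^2=(a,b_{\half}^2)a+(b_{\half}^2)_0$ gives part 3(d). The inner-product parts 3(a),(b) are separate: by Lemma \ref{O} the components $(a,b)a$, $b_0$, $b_{\half}$ are mutually orthogonal, so $(b,b_{\half})=(b_{\half},b_{\half})$ and $(b,b_0)=(b_0,b_0)$; Frobenius gives $(a,b_{\half}^2)=(ab_{\half},b_{\half})=\half(b_{\half},b_{\half})$, whence $(b_{\half},b_{\half})=2(a,b)(1-(a,b))$, and expanding $1=(b,b)=(a,b)^2+(b_0,b_0)+(b_{\half},b_{\half})$ yields $(b_0,b_0)=(1-(a,b))^2$.

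Parts (4) and (5) are the remaining projections of part (1): part (4) is the $A_0(a)$-component at $x=x_0$, part 5(a) the $A_0(a)$-component at $x=x_{\half}$, and part 5(b) the $A_{\half}(a)$-component at $x=x_{\half}$. In each case I compute $bx$, split it into $a$-eigencomponents, and read off the contribution of every summand, using the $a$-fusion rules to locate products, commutativity to rewrite $(b_{\half}x_0)b_{\half}=b_{\half}(b_{\half}x_0)$ and $(b_0x_{\half})b_{\half}=b_{\half}(x_{\half}b_0)$, the square relation to collapse the combination $(b,x)(b_0^2+(b_{\half}^2)_0)$ to $(b,x)b_0$, part 3(e) for the two $b_0b_{\half}$-terms appearing in 5(b), and the Frobenius identity $(a,b_{\half}x_{\half})=(ab_{\half},x_{\half})=\half(b,x_{\half})$ for the $\ff a$-coefficient of $bx_{\half}$. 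The main obstacle is entirely organizational: no single step is deep, but one must track the eigenspace component of each of the nine terms across the different choices of $x$ and of projection, and apply commutativity and Frobenius at exactly the right places, so that the $x_{\half}$-, $b_{\half}$-, $x_{\half}b_0$- and $(x_{\half}b_0)b_0$-type terms assemble with precisely the coefficients claimed.
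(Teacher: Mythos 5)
The proposal is correct and takes essentially the same route as the paper: part (1) is the expansion of $bx+(b,x)b-2b(bx)=-2Q_b(x)=0$ (the paper writes this as $b$ annihilating the $A_0(b)$-component $x+(b,x)b-2bx$, which is the identical expression) with $b=(a,b)a+b_0+b_{\half}$ inserted, part (3) comes from comparing $a$-eigencomponents of $b^2=b$ combined with (2), Lemma \ref{O} and the Frobenius property, and parts (2), (4), (5) are the stated eigenspace projections of (1) at $x=a$, $x_0$, $x_{\half}$. The component bookkeeping you indicate (fusion rules to locate each product, $(a,b_{\half}x_{\half})=\half(b,x_{\half})$, the square relation, and $b_0b_{\half}=\half(1-(a,b))b_{\half}$) is exactly what the paper's computations carry out.
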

\begin{proof}
(1) Write $b=(a,b)a+b_0+b_{\half}.$ Then, by Lemma \ref{x0x12}(2) applied to $b,$
\begin{gather*}
0=b(x+(b,x)b-2bx)=((a,b)a+b_0+b_{\half})(x+(b,x)b-2bx)\\
=(a,b)ax+(a,b)(b,x)ab-2(a,b)a(bx)+b_0x +(b,x)bb_0\\
 -2(bx)b_0+b_{\half}x+(b,x)bb_{\half}-2(bx)b_{\half}.
\end{gather*}
\medskip

\noindent
(2)  Replacing $x=a$ in (1) we get
\begin{gather*}
0=(a,b)a+(a,b)^2ab-2(a,b)a(ab) +(a,b)bb_0\\
\textstyle{ -2(ab)b_0+\half b_{\half}+(a,b)bb_{\half}-2(ab)b_{\half}}\\
 =\textstyle{w+\half(a,b)^2b_{\half}-\half(a,b)b_{\half}+(a,b)b_{\half}b_0} \\
 \textstyle{ -b_{\half}b_0+\half b_{\half}+(a,b)b_0b_{\half}+\half(a,b)^2b_{\half}-(a,b)b_{\half},}
\end{gather*}
where,
\begin{gather*}
w=(a,b)a+(a,b)^3a-2(a,b)^2a+(a,b)b_0^2+(a,b)b_{\half}^2-b_{\half}^2\\
\textstyle{=(a,b)((a,b)-1)^2a+\half((a,b)-1)(b_{\half},b_{\half})a}\\
+(a,b)b_0^2+((a,b)-1)\Big(b_{\half}^2\Big)_0.
\end{gather*}
Since $w=0,$ we see that $(a,b)(b_0^2+\Big(b_{\half}^2\Big)_0)=\Big(b_{\half}^2\Big)_0.$
\medskip

\noindent
(3) Since $b$ is an idempotent we have
\begin{align*}
b&=b^2=((a,b)a+b_0+b_{\half})^2=(a,b)^2a+b_0^2+b_{\half}^2+(a,b)b_{\half}+2b_0b_{\half}\\
&=(a,b)^2a+(a,b_{\half}^2)a+b_0^2+\Big(b_{\half}^2\Big)_0+(a,b)b_{\half}+2b_0b_{\half}.
\end{align*}
Since $(a,b_{\half}^2)=\half(b_{\half},b_{\half})=\half (b,b_{\half}),$ comparing the components of both sides yields (a) and (e) and $b_0^2+\Big(b_{\half}^2\Big)_0=b_0.$ Now by (2),  $\Big(b_{\half}^2\Big)_0=(a,b)b_0,$ so (c) holds.
Since
\[
1=(b,b)=(a,b)^2+(b_0,b_0)+(b_{\half},b_{\half}),
\]
(a) implies $(b,b_0)=(b_0,b_0)=(1-(a,b))^2,$ so (b) holds. Then, since $b_{\half}^2=(a,b_{\half}^2)a+\Big(b_{\half}^2\Big)_0,$ we see that (d) holds.
\medskip

\noindent
In the proofs of  (4) and (5) ahead, $w$ stands for the $1$-component plus the $0$-component of the right side of the equation.  
\medskip

\noindent
(4)  Replacing $x=x_0$ in (1) we get,
\begin{gather*}
0=(a,b)(b,x_0)ab-2(a,b)a(bx_0)+b_0x_0+(b,x_0)bb_0 \\
-2(bx_0)b_0+b_{\half}x_0+(b,x_0)bb_{\half}-2(bx_0)b_{\half}\\
=w+z_{\half},
\end{gather*}
with $w\in \ff a+A_0(a)$ and $z_{\half}\in A_{\half}(a),$ where,
\begin{gather*}
w=(a,b)^2(b,x_0)a+b_0x_0+(b,x_0)b_0^2-2(x_0b_0)b_0+(b,x_0)b_{\half}^2-2b_{\half}(b_{\half}x_0) \\
=(a,b)^2(b,x_0)a+x_0b_0+ (b,x_0)(1-(a,b))b_0-2(x_0b_0)b_0\\
+(b,x_0)(a,b)(1-(a,b))a+(b,x_0)(a,b)b_0-(a,b)(b,x_0)a-2\Big(b_{\half}(b_{\half}x_0) \Big)_0\\
=x_0b_0+(b,x_0)b_0-2(x_0b_0)b_0-2\Big(b_{\half}(b_{\half}x_0) \Big)_0,
\end{gather*}
where we used that $(a,b_{\half}(b_{\half}x_0) )=\half(b_{\half}^2,x_0)=\half(a,b)(b,x_0),$ and part (2) for $b_0^2$ and $b_{\half}^2.$
Since $w=0,$ (4) follows.
\medskip

\noindent
(5)  Replacing $x=x_{\half}$ in (1) we get,
\begin{gather*}
\textstyle{0=\half(a,b)x_{\half}+(a,b)(b,x_{\half})ab-2(a,b)a(bx_{\half})+b_0x_{\half} +(b,x_{\half})bb_0}\\
 -2(bx_{\half})b_0+b_{\half}x_{\half}+(b,x_{\half})bb_{\half}-2(bx_{\half})b_{\half}\\
 =\textstyle{w+\half(a,b)x_{\half}+\half(a,b)(b,x_{\half})b_{\half}-\half(a,b)^2x_{\half}-(a,b)x_{\half}b_0}\\
 +x_{\half}b_0+(b,x_{\half})b_{\half}b_0 \\
 \textstyle{-(a,b)x_{\half}b_0-2(x_{\half}b_0)b_0+\half(a,b)(b,x_{\half})b_{\half}+(b,x_{\half})b_{\half}b_0 
  -2(b_{\half}x_{\half})b_{\half}} \\
  \\
  =\textstyle{w+\half(a,b)(1-(a,b))x_{\half}+(a,b)(b,x_{\half})b_{\half}+(1-2(a,b))x_{\half}b_0}\\
 +(1-(a,b))(b,x_{\half})b_{\half}-2(x_{\half}b_0)b_0-2(b_{\half}x_{\half})b_{\half}\\
  \\
  \textstyle{=w+\half(a,b)(1-(a,b))x_{\half}+(b,x_{\half})b_{\half}+(1-2(a,b))x_{\half}b_0}\\
 -2(x_{\half}b_0)b_0 
  -2(b_{\half}x_{\half})b_{\half}\\
\textstyle{=w+\half(a,b)(1-(a,b))x_{\half}+\half(b,x_{\half})b_{\half}+(1-2(a,b))x_{\half}b_0}\\
 -2(x_{\half}b_0)b_0 
  -2\Big(b_{\half}x_{\half}\Big)_0b_{\half},
\end{gather*}
with $w\in \ff a+A_0(a),$ where we used $b_{\half}x_{\half}=\half(b,x_{\half})a+\Big(b_{\half}x_{\half}\Big)_0.$ This shows part (b).  Now
\begin{gather*}
w=(a,b)^2(b,x_{\half})a-2(a,b)a(b_{\half}x_{\half})+(b,x_{\half})b_0^2-2(b_{\half}x_{\half})b_0+\\
b_{\half}x_{\half}+(b,x_{\half})b_{\half}^2-(a,b)x_{\half}b_{\half}-2(b_0x_{\half})b_{\half}\\
\textstyle{=(b,x_{\half})((a,b)-1) ((a,b)-\half)a+\half(b_{\half},b_{\half})(b,x_{\half})a-(b_{\half},b_0x_{\half})a}\\
+(b,x_{\half})b_0^2-2(b_{\half}x_{\half})b_0+(1-(a,b))\Big(x_{\half}b_{\half}\Big)_0\\
+(b,x_{\half})\Big(b_{\half}^2\Big)_0 -2\Big((b_0x_{\half})b_{\half}\Big)_0,
\end{gather*}
where the last equality follows from
\begin{align*}
&b_{\half}x_{\half}=\half(b,x_{\half})a+\Big(b_{\half}x_{\half}\Big)_0\text{ so}\\
&(a,b)^2(b,x_{\half})a-2(a,b)a(b_{\half}x_{\half})+(1-(a,b))b_{\half}x_{\half}=\\
&\textstyle{\Big[(a,b)^2(b,x_{\half})-(a,b)(b,x_{\half})+\half(1-(a,b))(b,x_{\half})\Big]a+(1-(a,b))\Big(b_{\half}x_{\half}\Big)_0=}\\
&\textstyle{\Big[(b,x_{\half})(a,b)((a,b)-1))+\half(1-(a,b))(b,x_{\half})\Big]a+(1-(a,b))\Big(b_{\half}x_{\half}\Big)_0=}\\
&\textstyle{(b,x_{\half})((a,b)-1)((a,b)-\half)a+(1-(a,b))\Big(b_{\half}x_{\half}\Big)_0.}
\end{align*}
We have
\begin{gather*}
0=(b,x_{\half})b_0^2-2(b_{\half}x_{\half})b_0+(1-(a,b))\Big(x_{\half}b_{\half}\Big)_0\\
+(b,x_{\half})\Big(b_{\half}^2\Big)_0 -2\Big((b_0x_{\half})b_{\half}\Big)_0\\
=(b,x_{\half})(1-(a,b))b_0-2\Big(b_{\half}x_{\half}\Big)_0b_0+(1-(a,b))\Big(x_{\half}b_{\half}\Big)_0\\
+(b,x_{\half})(a,b)b_0-2\Big((b_0x_{\half})b_{\half}\Big)_0\\
=(b,x_{\half})b_0-2\Big(b_{\half}x_{\half}\Big)_0b_0+(1-(a,b))\Big(x_{\half}b_{\half}\Big)_0-2\Big((b_0x_{\half})b_{\half}\Big)_0,
\end{gather*}
so part (a) holds.
\end{proof}

\begin{lemma}\label{P}
Let $a, b$ be  axes in $A.$  Then
\begin{enumerate}
\item
$a(x(y+(a,y)a-2ay))=(ax)(y+(a,y)a-2ay),$ for all $x,y\in A,$ that is
$a (xy) -2a ((ay) x) = (ax) y-2 (ax) (ay).$

\item
For all $x,y\in A,$
\begin{gather*}
(a,b)(1-(a,b))a(xy)+b_0(xy)+b_{\half}(xy)-2\Big((a,b)b_0((ay)x)+(a,b)b_{\half}((ay)x)\Big)\\
-2\Big((a,b)a((b_0y)x)+b_0((b_0y)x)+b_{\half}((b_0y)x)\\
+(a,b)a((b_{\half}y)x)+b_0((b_{\half}y)x)+b_{\half}((b_{\half}y)x)\Big)\\=
\\
(a,b)(1-(a,b))(ax)y+(b_0x)y+(b_{\half}x)y-2\Big((a,b)(ax)(b_0y)+(a,b)(ax)(b_{\half}y) \Big)\\
-2\Big((a,b)(b_0x)(ay)+(b_0x)(b_0y)+ (b_0x)(b_{\half}y)\Big)\\
-2\Big((a,b) (b_{\half}x)(ay)+ (b_{\half}x)(b_0y)+ (b_{\half}x)(b_{\half}y) \Big).
\end{gather*}
 
\item
Taking $x=x_0\in A_0(a)$ and $y=a$ in (2) yields
\[
(1-(a,b))b_{\half}x_0-2b_{\half}(x_0b_0)+2(b_{\half}x_0)b_0 =0.
\]

\item
Taking $x=a$ and $y=y_{\half}$ in (2), we get
\begin{enumerate}
\item
$\half(a,b)(1-(a,b))y_{\half}-2b_{\half}\Big(y_{\half}b_{\half}\Big)_0 
+\half(b, y_{\half})b_{\half}+2(y_{\half}b_0)b_0-y_{\half}b_0=0.$
 
\item
$(x_{\half}b_0)b_0 =\half(1-(a,b))x_{\half}b_0.$

\item
$b_{\half}\Big(b_{\half}x_{\half}\Big)_0 =\frac 14(a,b)(1-(a,b))x_{\half} -\half(a,b)x_{\half}b_0+\frac 14(b,x_{\half})b_{\half}.$

\item 
$b_{\half}(b_{\half}x_{\half})= \frac 14(a,b)(1-(a,b))x_{\half}-\half(a,b)x_{\half}b_0+\half(b,x_{\half})b_{\half}.$
 \end{enumerate}

 \item
 Taking $y=a$ and $x=x_{\half}$ in (2) we get
 \begin{enumerate}

 \item
$(1-(a,b))\Big(b_{\half}x_{\half}\Big)_0-2(b_{\half}x_{\half} )b_0+2\Big(b_{\half}(x_{\half}b_0)\Big)_0 =0.$

\item
$\Big(b_{\half}(x_{\half}b_0) \Big)_0=\frac 14(b,x_{\half})b_0.$

\item 
$b_{\half}(x_{\half}b_0)=\frac 14(1-(a,b))(b,x_{\half})a+\frac 14(b,x_{\half})b_0.$

\item
From (a) and (b) it follows:
\[\textstyle{(b_{\half}x_{\half})b_0=\half(1-(a,b))\Big(b_{\half}x_{\half}\Big)_0+\frac 14 (b,x_{\half})b_0.}\]
\end{enumerate}

\item
Taking $x=x_0,$ and $y=y_0$ in (2) we get
\begin{gather*}
b_0(x_0y_0)+b_{\half}(x_0y_0)
-2\Big[b_0((b_0y_0)x_0)+b_{\half}((b_0y_0)x_0) \\
\textstyle{+\half(a,b)(b_{\half}y_0)x_0+b_0((b_{\half}y_0)x_0)+b_{\half}((b_{\half}y_0)x_0)\Big]}\\
=(b_0x_0)y_0+(b_{\half}x_0)y_0\\
-2\Big[(b_0x_0)(b_0y_0)+ (b_0x_0)(b_{\half}y_0)\Big]\\
-2\Big[(b_{\half}x_0)(b_0y_0)+ (b_{\half}x_0)(b_{\half}y_0) \Big].
\end{gather*}

\item
For all $x_0, y_0\in A_0(a),$
\begin{gather*}
b_{\half}(x_0y_0)+2 (b_{\half}y_0)(x_0b_0)
+2(b_{\half}x_0)(y_0b_0)\\
=(b_{\half}x_0)y_0+2b_{\half}(x_0(y_0b_0) ) 
+(a,b)(b_{\half}y_0)x_0+2((b_{\half}y_0)x_0)b_0.
\end{gather*}

\item
Interchanging $x_0$ and $y_0$ in (7) we get
\begin{enumerate}

\item 
By subtracting we get
\begin{gather*}
(1-(a,b))(b_{\half}x_0)y_0-(1-(a,b))(b_{\half}y_0)x_0  \\
=2b_{\half}(y_0(x_0b_0) )-2b_{\half}(x_0(y_0b_0) )+2((b_{\half}x_0)y_0)b_0-2((b_{\half}y_0)x_0)b_0.
\end{gather*}

\item 
By adding we get
\begin{gather*}
2b_{\half}(x_0y_0)+4 (b_{\half}y_0)(x_0b_0)
+4(b_{\half}x_0)(y_0b_0)\\
=(1+(a,b))(b_{\half}x_0)y_0+(1+(a,b))(b_{\half}y_0)x_0+2b_{\half}((y_0b_0)x_0)+2b_{\half}((x_0b_0)y_0)\\
 +2((b_{\half}y_0)x_0)b_0+2((b_{\half}x_0)y_0)b_0.
\end{gather*}
\end{enumerate}
 
\item 
For all $x_0, y_0\in A_0(a),$

\begin{gather*}
(x_0y_0)b_0+2(x_0b_0)(y_0b_0)+2\Big((b_{\half}x_0)(b_{\half}y_0)\Big)_0\\
=y_0(x_0b_0)+2(x_0(y_0b_0))b_0+2\Big(b_{\half}((b_{\half}y_0)x_0)\Big)_0.
\end{gather*}
\item
Interchanging $x_0$ and $y_0$ in (9), we get

\begin{enumerate}
\item\label{14a}
by subtracting, 
\begin{gather*}
2\Big(b_{\half}((b_{\half}x_0)y_0)\Big)_0-
2\Big(b_{\half}((b_{\half}y_0)x_0)\Big)_0\\
=y_0(x_0b_0)+2(x_0(y_0b_0))b_0-x_0(y_0b_0) -2(y_0(x_0b_0))b_0.
\end{gather*}
%
%

\item\label{14b} 
By adding,
\begin{gather*}
 4(x_0b_0)(y_0b_0)+4\Big((b_{\half}x_0)(b_{\half}y_0)\Big)_0-y_0(x_0b_0)-x_0(y_0b_0)+(x_0y_0)b_0\\
= 2(x_0(y_0b_0))b_0+2(y_0(x_0b_0))b_0\\
+2\Big(b_{\half}((b_{\half}y_0)x_0)\Big)_0+2\Big(b_{\half}((b_{\half}x_0)y_0)\Big)_0-(x_0y_0)b_0. 
\end{gather*}
\end{enumerate}

\end{enumerate}
\end{lemma}
\begin{proof}
(1)  Follows from Seress' lemma \ref{seress}, and Lemma \ref{x0x12}(1).
\medskip

\noindent
(2)  By (1),
\begin{gather*}
b(xy)-2b((by)x)=(bx)y-2(bx)(by)\quad\iff\\
((a,b)a+b_0+b_{\half})(xy)-2((a,b)a+b_0+b_{\half})((((a,b)a+b_0+b_{\half})y)x)\\
=(((a,b)a+b_0+b_{\half})x)y-2(((a,b)a+b_0+b_{\half})x)
(((a,b)a+b_0+b_{\half})y)\\\iff
\\
(a,b)a(xy)+b_0(xy)+b_{\half}(xy)-2\Big[(a,b)^2a((ay)x))+(a,b)b_0((ay)x)+(a,b)b_{\half}((ay)x)\Big]\\
-2\Big[(a,b)a((b_0y)x)+b_0((b_0y)x)+b_{\half}((b_0y)x)+(a,b)a((b_{\half}y)x)+b_0((b_{\half}y)x)+b_{\half}((b_{\half}y)x)\Big]\\
=(a,b)(ax)y+(b_0x)y+(b_{\half}x)y-2\Big[(a,b)^2(ax)(ay)+(a,b)(ax)(b_0y)+(a,b)(ax)(b_{\half}y) \Big]\\
-2\Big[(a,b)(b_0x)(ay)+(b_0x)(b_0y)+ (b_0x)(b_{\half}y) \Big]\\
-2\Big[(a,b) (b_{\half}x)(ay)+ (b_{\half}x)(b_0y)+ (b_{\half}x)(b_{\half}y) \Big]\\\iff
\\
(a,b)(1-(a,b))a(xy)+b_0(xy)+b_{\half}(xy)-2\Big[(a,b)b_0((ay)x)+(a,b)b_{\half}((ay)x)\Big]\\
-2\Big[(a,b)a((b_0y)x)+b_0((b_0y)x)+b_{\half}((b_0y)x)+(a,b)a((b_{\half}y)x)+b_0((b_{\half}y)x)+b_{\half}((b_{\half}y)x)\Big]\\
=(a,b)(1-(a,b))(ax)y+(b_0x)y+(b_{\half}x)y-2\Big[(a,b)(ax)(b_0y)+(a,b)(ax)(b_{\half}y) \Big]\\
-2\Big[(a,b)(b_0x)(ay)+(b_0x)(b_0y)+ (b_0x)(b_{\half}y)\Big]\\
-2\Big[(a,b) (b_{\half}x)(ay)+ (b_{\half}x)(b_0y)+ (b_{\half}x)(b_{\half}y) \Big]
\end{gather*}
where we used
$(a,b)^2\Big[a(xy)-2a((ay)x)\Big]=(a,b)^2\Big[(ax)y-2(ax)(ay)\Big].$

\noindent
(3)
Taking $y=a$ and $x=x_0$ in (2) we get,
\begin{gather*}
\textstyle{-\half(a,b)(b_{\half}x_0)- b_0(b_{\half}x_0)- b_{\half}(b_{\half}x_0)}\\
\textstyle{=\half b_{\half}x_0
-(b_0x_0)b_{\half}
-(a,b) (b_{\half}x_0)-(b_{\half}x_0)b_{\half}\iff}\\
\textstyle{\half((a,b)-1)b_{\half}x_0- b_0(b_{\half}x_0)=-(b_0x_0)b_{\half}.}
\end{gather*}
This shows (3).
%
\medskip

\noindent
(4) (a):\ Replacing $x=a$ and $y=y_{\half}$ in (2) we get
\begin{gather*}
\textstyle{\frac 14 (a,b)(1-(a,b))y_{\half}+\half y_{\half}b_0+\half y_{\half}b_{\half}-2\Big[\frac 14 (a,b)y_{\half}b_0+\frac 14 (a,b)y_{\half}b_{\half}\Big]}\\
\textstyle{-2\Big[\frac 14 (a,b)y_{\half}b_0+\half (y_{\half}b_0)b_0+\half(y_{\half}b_0)b_{\half}+(a,b)(a,b_{\half}y_{\half})a+\half (a, b_{\half}y_{\half})b_{\half}\Big]}\\
\textstyle{=\half(a,b)(1-(a,b))y_{\half}+\half b_{\half}y_{\half}-2\Big[\half(a,b)y_{\half}b_0+(a,b)(a,b_{\half}y_{\half})a \Big]}\\
\textstyle{-2\Big[\frac 14(a,b)y_{\half}b_{\half}+\half(y_{\half}b_0) b_{\half} + \half b_{\half}(b_{\half}y_{\half}) \Big].}
\end{gather*}
Hence, comparing the $\half$-components, we get
\begin{gather*}
\textstyle{\frac 14 (a,b)(1-(a,b))y_{\half}+\half y_{\half}b_0-\half (a,b)y_{\half}b_0}\\
\textstyle{-\half (a,b)y_{\half}b_0- (y_{\half}b_0)b_0-(a,y_{\half}b_{\half})b_{\half}}\\
\textstyle{=\half(a,b)(1-(a,b))y_{\half}-(a,b)y_{\half}b_0}\\
\textstyle{-\half(a, b_{\half}y_{\half})b_{\half}-\Big(y_{\half}b_{\half}\Big)_0b_{\half}.}
\end{gather*}
Hence
\begin{gather*}
\textstyle{\half y_{\half}b_0- (y_{\half}b_0)b_0
=\frac 14(a,b)(1-(a,b))y_{\half}
+\frac 14(b, y_{\half})b_{\half}-b_{\half}\Big(b_{\half}y_{\half} \Big)_0,}
\end{gather*}
or
\[
\textstyle{\half(a,b)(1-(a,b))y_{\half}-2b_{\half}\Big(b_{\half}y_{\half}\Big)_0 
+\half(b, y_{\half})b_{\half}+2(y_{\half}b_0)b_0-y_{\half}b_0=0.}
\]
\medskip

\noindent
(b):\ Comparing (a) with Lemma \ref{Q}(5)(b) which states:
\begin{gather*}
\textstyle{\half(a,b)(1-(a,b))x_{\half}-2b_{\half}\Big(b_{\half}x_{\half}\Big)_0  +\half(b,x_{\half})b_{\half}}\\
-2(x_{\half}b_0)b_0+(1-2(a,b))x_{\half}b_0=0,
\end{gather*}
 we get
 \[
 2(x_{\half}b_0)b_0-x_{\half}b_0=-2(x_{\half}b_0)b_0+(1-2(a,b))x_{\half}b_0,
 \]
 which proves (b).
 \medskip

 \noindent
 (c)
 By lemma \ref{Q}(5)(b), and by (b),
\begin{gather*}
\textstyle{\half(a,b)(1-(a,b))x_{\half}-2b_{\half}\Big(b_{\half}x_{\half}\Big)_0  +\half(b,x_{\half})b_{\half}}\\
-2(x_{\half}b_0)b_0+(1-2(a,b))x_{\half}b_0=0\\\iff
 \\
 \textstyle{\half(a,b)(1-(a,b))x_{\half}-2\Big(b_{\half}x_{\half}\Big)_0b_{\half}+\half(b,x_{\half})b_{\half}-(a,b)x_{\half}b_0=0.} 
 \end{gather*}
 \medskip

 \noindent
(d)  Note that $(a,b_{\half}x_{\half})=\half(b,x_{\half}).$  Hence, by (c),
\begin{gather*}
b_{\half}(b_{\half}x_{\half})=b_{\half}\Big[(a,b_{\half}x_{\half})a+\Big(b_{\half}x_{\half}\Big)_0\Big]\\
\textstyle{=\frac 14(b,x_{\half})b_{\half}+\frac 14(a,b)(1-(a,b))x_{\half}+\frac 14(b,x_{\half})b_{\half}-\half(a,b)x_{\half}b_0.}
\end{gather*}
%
\medskip

\noindent
(5)  Taking $y=a$ and $x=x_{\half}$ in (2) we get
\begin{gather*}
\textstyle{\frac 14 (a,b)(1-(a,b))x_{\half}+\half x_{\half}b_0+\half b_{\half}x_{\half}-(a,b)x_{\half}b_0 -(a,b)b_{\half}x_{\half}}\\
\textstyle{-(a,b)a(b_{\half}x_{\half})- b_0(b_{\half}x_{\half})- b_{\half}(b_{\half}x_{\half})\quad =}\\
\textstyle{\frac 14 (a,b)(1-(a,b))x_{\half}+\half x_{\half}b_0+a(b_{\half}x_{\half})-\half (a,b)x_{\half}b_{\half}}\\
-(a,b)x_{\half}b_0-(x_{\half}b_0)b_{\half}
-2(a,b) a(b_{\half}x_{\half})- (b_{\half}x_{\half})b_{\half}\\\iff
\\
\textstyle{\half b_{\half}x_{\half}-\half(a,b)b_{\half}x_{\half}
-(a,b)a(b_{\half}x_{\half})- b_0(b_{\half}x_{\half}) }\\
\textstyle{=a(b_{\half}x_{\half})
-(b_0x_{\half})b_{\half}
-2(a,b) a(b_{\half}x_{\half})}\quad\iff
\\
\textstyle{\half(1-(a,b)) b_{\half}x_{\half}+((a,b)-1)a(b_{\half}x_{\half})+(b_0x_{\half})b_{\half}-b_0(b_{\half}x_{\half})=0.}
\end{gather*}
Writing $b_{\half}x_{\half}=\half (b,x_{\half})a+\Big(b_{\half}x_{\half}\Big)_0,$ and $b_{\half}(b_0x_{\half})=\half (b,b_0x_{\half})a+\Big(b_{\half}(b_0x_{\half})\Big)_0,$ we get $((a,b)-1)a(b_{\half}x_{\half})=\half((a,b)-1)(b,x_{\half})a,$ so
\begin{gather*}
\textstyle{\frac 14((a,b)-1)(b,x_{\half})a+\half(b,b_0x_{\half})a}\\
\textstyle{+\half(1-(a,b))\Big(b_{\half}x_{\half}\Big)_0+\Big((b_0x_{\half})b_{\half}\Big)_0-b_0(b_{\half}x_{\half})=0.}
\end{gather*}
Now $\half(b,b_0x_{\half})=\half(b_{\half}b_0,x_{\half})=\frac 14(1-(a,b))(b,x_{\half}),$ so part (a) holds.  Then, (b) follows from (a) and Lemma \ref{Q}(5)(a).
\medskip

\noindent
(c) We only need to compute 
$(a,b_{\half}(x_{\half}b_0))=\half(b_{\half}b_0,x_{\half})=\frac 14(1-(a,b))(b,x_{\half}).$
%
%
%
%
(7) Comparing the $\half$-components in (6) we get
\begin{gather*}
\textstyle{b_{\half}(x_0y_0)
-2\Big[b_{\half}((y_0b_0)x_0) 
+\half(a,b)(b_{\half}y_0)x_0+((b_{\half}y_0)x_0)b_0\Big]}\\
=(b_{\half}x_0)y_0
-2(b_{\half}y_0)(x_0b_0)
-2(b_{\half}x_0)(y_0b_0 )\\\iff
\\
b_{\half}(x_0y_0)+2 (b_{\half}y_0)(x_0b_0)
+2(b_{\half}x_0)(y_0b_0 )\\
=(b_{\half}x_0)y_0+2b_{\half}((y_0b_0)x_0) 
+(a,b)(b_{\half}y_0)x_0+2((b_{\half}y_0)x_0)b_0.
\end{gather*}
\medskip

\noindent
(8) Interchanging $x_0$ and $y_0$ in (7) we get, by subtracting,
\begin{gather*}
(b_{\half}x_0)y_0+2b_{\half}((y_0b_0)x_0) 
+(a,b)(b_{\half}y_0)x_0+2((b_{\half}y_0)x_0)b_0\\
=(b_{\half}y_0)x_0+2b_{\half}((x_0b_0)y_0) 
+(a,b)(b_{\half}x_0)y_0+2((b_{\half}x_0)y_0)b_0\\\iff
\\
(1-(a,b))(b_{\half}x_0)y_0-(1-(a,b))(b_{\half}y_0)x_0  \\
=2b_{\half}((x_0b_0)y_0)-2b_{\half}((y_0b_0)x_0)+2((b_{\half}x_0)y_0)b_0-2((b_{\half}y_0)x_0)b_0.
\end{gather*}
The other part of (8) is clear.
\medskip

\noindent
(9) Comparing the $0$-components in (6) we get,
\begin{gather*}
b_0(x_0y_0)-2b_0((b_0y_0)x_0) 
-2\Big(b_{\half}((b_{\half}y_0)x_0)\Big)_0\\
=(b_0x_0)y_0-2(b_0x_0)(b_0y_0)-2\Big((b_{\half}x_0)(b_{\half}y_0)\Big)_0\\\iff
\\
b_0(x_0y_0)+2(b_0x_0)(b_0y_0)+2\Big((b_{\half}x_0)(b_{\half}y_0)\Big)_0\\
=(b_0x_0)y_0+2b_0((b_0y_0)x_0)+2\Big(b_{\half}((b_{\half}y_0)x_0)\Big)_0.
\end{gather*}
\medskip

\noindent
(10)  This is immediate from (9).
\end{proof}

\begin{lemma}\label{120}
Let $x_{\half}\in A_{\half}(a)$ and $y_0\in A_0(a),$ then
\begin{enumerate}

\item 
\begin{gather*} 
(x_{\half}y_0)b_0+b_{\half}(x_{\half}y_0)\\
-2\Big[(x_{\half}(y_0b_0))b_0+b_{\half}(x_{\half}(y_0b_0) )+(a,b)a((b_{\half}y_0)x_{\half})+ (x_{\half}(b_{\half}y_0))b_0\\
+b_{\half}(x_{\half}(b_{\half}y_0))\Big]\quad =\\
(x_{\half}b_0)y_0+(b_{\half}x_{\half})y_0-(a,b)x_{\half}(b_{\half}y_0)\\
-2\Big[(x_{\half}b_0)(y_0b_0)+ (x_{\half}b_0)(b_{\half}y_0)\Big]\\
-2\Big[(b_{\half}x_{\half})(y_0b_0)+ (b_{\half}x_{\half})(b_{\half}y_0) \Big].
\end{gather*}

\item 
\begin{gather*} 
(x_{\half}y_0)b_0-2(x_{\half}(y_0b_0))b_0-2b_{\half}(x_{\half}(b_{\half}y_0) )\quad =\\
(x_{\half}b_0)y_0-2(x_{\half}b_0)(y_0b_0)
-2(b_{\half}x_{\half})(b_{\half}y_0).
\end{gather*}
%
%
%

\item
\begin{gather*} 
b_{\half}(x_{\half}y_0)+(a,b)(b_{\half}y_0)x_{\half} 
-(b_{\half}x_{\half})y_0 
+2(x_{\half}b_0)(b_{\half}y_0)\\
+2(b_{\half}x_{\half})(y_0b_0)-2(b_{\half}(x_{\half}(y_0b_0) )-2(a,b)a(x_{\half}(b_{\half}y_0))-2((b_{\half}y_0)x_{\half} )b_0\\
= 0.
\end{gather*}
\end{enumerate}
\end{lemma}
\begin{proof}
(1) taking $x=x_{\half}$ and $y=y_0$ in Lemma \ref{P}(2), we get
\begin{gather*} 
(a,b)(1-(a,b))a(x_{\half}y_0)+b_0(x_{\half}y_0)+b_{\half}(x_{\half}y_0)-2\Big[(a,b)b_0((ay_0)x_{\half})+(a,b)b_{\half}((ay_0)x_{\half})\Big]\\
-2\Big[(a,b)a((b_0y_0)x_{\half}) +b_0((b_0y_0)x_{\half})+b_{\half}((b_0y_0)x_{\half})+(a,b)a((b_{\half}y_0)x_{\half})+b_0((b_{\half}y_0)x_{\half})\\
+b_{\half}((b_{\half}y_0)x_{\half})\Big]\\=
\\
(a,b)(1-(a,b))(ax_{\half})y_0+(b_0x_{\half})y_0+(b_{\half}x_{\half})y_0-2\Big[(a,b)(ax_{\half})(b_0y_0)+(a,b)(ax_{\half})(b_{\half}y_0) \Big]\\
-2\Big[(a,b)(b_0x_{\half})(ay_0)+(b_0x_{\half})(b_0y_0)+ (b_0x_{\half})(b_{\half}y_0)\Big]\\
-2\Big[(a,b) (b_{\half}x_{\half})(ay_0)+ (b_{\half}x_{\half})(b_0y_0)+ (b_{\half}x_{\half})(b_{\half}y_0) \Big].
\end{gather*}
So,
\begin{gather*} 
\textstyle{\half(a,b)(1-(a,b))x_{\half}y_0+(x_{\half}y_0)b_0+b_{\half}(x_{\half}y_0)}\\
\textstyle{-2\Big[\half(a,b)x_{\half}(y_0b_0) +(x_{\half}(y_0b_0))b_0+b_{\half}(x_{\half}(y_0b_0) )+(a,b)a((b_{\half}y_0)x_{\half})+ ((b_{\half}y_0)x_{\half})b_0}\\
+b_{\half}((b_{\half}y_0)x_{\half})\Big]\\=
\\
\textstyle{\half(a,b)(1-(a,b))x_{\half}y_0+(x_{\half}b_0)y_0+(b_{\half}x_{\half})y_0-2\Big[\half(a,b)x_{\half}(y_0b_0 )+\half(a,b)x_{\half}(b_{\half}y_0) \Big]}\\
-2\Big[(x_{\half}b_0)(y_0b_0)+ (x_{\half}b_0)(b_{\half}y_0)\Big)\\
-2\Big((b_{\half}x_{\half})(y_0b_0)+ (b_{\half}x_{\half})(b_{\half}y_0) \Big].
\end{gather*}
Hence,
\begin{gather*} 
(x_{\half}y_0)b_0+b_{\half}(x_{\half}y_0))\\
-2\Big[(x_{\half}(y_0b_0))b_0+b_{\half}(x_{\half}(b_0y_0) )+(a,b)a((b_{\half}y_0)x_{\half})+ ((b_{\half}y_0)x_{\half})b_0\\
+b_{\half}((b_{\half}y_0)x_{\half})\Big]\\
=\textstyle{(x_{\half}b_0)y_0+(b_{\half}x_{\half})y_0-2\Big(\half(a,b)x_{\half}(b_{\half}y_0) \Big)}\\
-2\Big[(x_{\half}b_0)(y_0b_0)+ (b_0x_{\half})(b_{\half}y_0)\Big]\\
-2\Big[(b_{\half}x_{\half})(b_0y_0)+ (b_{\half}x_{\half})(b_{\half}y_0) \Big].
\end{gather*}
\medskip

\noindent
(2) Comparing the $\half$-components in (1) we get
\begin{gather*} 
(x_{\half}y_0)b_0-2(x_{\half}(y_0b_0))b_0-2b_{\half}((b_{\half}y_0)x_{\half})\\
=(x_{\half}b_0)y_0-2(x_{\half}b_0)(y_0b_0)-2(b_{\half}x_{\half})(b_{\half}y_0).
\end{gather*}
This shows (2).
\medskip

\noindent
(3)  Comparing the $1$-component plus the $0$-components on both sides of (1) gives:
\begin{gather*} 
b_{\half}(x_{\half}y_0)
-2b_{\half}(x_{\half}(y_0b_0))-2(a,b)a(x_{\half}(b_{\half}y_0))-2(x_{\half}(b_{\half}y_0))b_0\\
=(b_{\half}x_{\half})y_0-(a,b)x_{\half}(b_{\half}y_0)
-2(x_{\half}b_0)(b_{\half}y_0)
-2(b_{\half}x_{\half})(y_0b_0).\qedhere
\end{gather*}
\end{proof}

\begin{lemma}\label{012}
Let $x_0\in A_0(a)$ and $y_{\half}\in A_{\half}(a), then$
\begin{enumerate}
\item 
\begin{gather*}
\textstyle{\half(a,b)(1-(a,b))y_{\half}x_0+(1-(a,b))(y_{\half}x_0)b_0+(1-(a,b))b_{\half}(y_{\half}x_0)}\\
-(a,b)(y_{\half}b_0)x_0-2((y_{\half}b_0)x_0)b_0-2b_{\half}((y_{\half}b_0)x_0)\\
-2((b_{\half}y_{\half})x_0)b_0-2b_{\half}((b_{\half}y_{\half})x_0)\\
=(1-(a,b)) y_{\half}(x_0b_0)+(1-(a,b))y_{\half}(b_{\half}x_0) \\
-2(y_{\half}b_0)(b_0x_0)-2(b_{\half}y_{\half})(b_0x_0)\\
-2(b_{\half}x_0)(y_{\half}b_0)-2(b_{\half}x_0)(b_{\half}y_{\half}).
\end{gather*}

\item 
\begin{gather*}
\textstyle{\half(a,b)(1-(a,b))y_{\half}x_0+(1-(a,b))(y_{\half}x_0)b_0
-(a,b)(y_{\half}b_0)x_0-2((y_{\half}b_0)x_0)b_0}\\
-2b_{\half}((b_{\half}y_{\half})x_0)\\
=(1-(a,b)) y_{\half}(x_0b_0)
-2(y_{\half}b_0)(b_0x_0)
-2(b_{\half}x_0)(b_{\half}y_{\half}).
\end{gather*}

\item 
\begin{gather*}
(1-(a,b))b_{\half}(y_{\half}x_0)
-(1-(a,b))(b_{\half}x_0)y_{\half}+2(b_{\half}x_0)(y_{\half}b_0)\\
 +2(b_{\half}y_{\half})(x_0b_0 )-2((b_{\half}y_{\half})x_0)b_0-2b_{\half}((y_{\half}b_0)x_0)
  \\
= 0 .
\end{gather*}
\end{enumerate}
\end{lemma}
\begin{proof}

(1)
Replacing $x=x_0$ and $y=y_{\half}$ in Lemma \ref{P}(2) we get
\begin{gather*}
(a,b)(1-(a,b))a(x_0y_{\half})+b_0(x_0y_{\half})+b_{\half}(x_0y_{\half})-2\Big((a,b)b_0((ay_{\half})x_0)+(a,b)b_{\half}((ay_{\half})x_0)\Big)\\
-2\Big((a,b)a((b_0y_{\half})x_0)+b_0((b_0y_{\half})x_0)+b_{\half}((b_0y_{\half})x_0)+(a,b)a((b_{\half}y_{\half})x_0)\\
+b_0((b_{\half}y_{\half})x_0)+b_{\half}((b_{\half}y_{\half})x_0)\Big)\\=
\\
(a,b)(1-(a,b))(ax_0)y_{\half}+(b_0x_0)y_{\half}+(b_{\half}x_0)y_{\half}\\
-2\Big((a,b)(ax_0)(b_0y_{\half})+(a,b)(ax_0)(b_{\half}y_{\half}) \Big)\\
-2\Big((a,b)(b_0x_0)(ay_{\half})+(b_0x_0)(b_0y_{\half})+ (b_0x_0)(b_{\half}y_{\half})\Big)\\
-2\Big((a,b) (b_{\half}x_0)(ay_{\half})+ (b_{\half}x_0)(b_0y_{\half})+ (b_{\half}x_0)(b_{\half}y_{\half}) \Big)
\end{gather*}
\begin{gather*}
\iff\\
\textstyle{\half(a,b)(1-(a,b))y_{\half}x_0+(y_{\half}x_0)b_0+b_{\half}(y_{\half}x_0)-2\Big(\half(a,b)(y_{\half}x_0)b_0 +\half(a,b)b_{\half}(y_{\half}x_0)\Big)}\\
\textstyle{-2\Big(\half(a,b)(y_{\half}b_0)x_0+((y_{\half}b_0)x_0)b_0+b_{\half}((y_{\half}b_0)x_0)}\\
+((b_{\half}y_{\half})x_0)b_0+b_{\half}((b_{\half}y_{\half})x_0)\Big)\\=
\\
y_{\half}(x_0b_0 )+y_{\half}(b_{\half}x_0) \\
\textstyle{-2\Big(\half(a,b)y_{\half}(x_0b_0 ) +(y_{\half}b_0)(x_0b_0 ) +  (b_{\half}y_{\half})(x_0b_0 )\Big)}\\
\textstyle{-2\Big(\half(a,b)y_{\half} (b_{\half}x_0) + (b_{\half}x_0)(y_{\half}b_0)+ (b_{\half}x_0)(b_{\half}y_{\half}) \Big)}\\\iff
\\
\textstyle{\half(a,b)(1-(a,b))y_{\half}x_0+(1-(a,b))(y_{\half}x_0)b_0+(1-(a,b))b_{\half}(y_{\half}x_0)}\\
\textstyle{-2\Big(\half(a,b)(y_{\half}b_0)x_0+((y_{\half}b_0)x_0)b_0+b_{\half}((y_{\half}b_0)x_0)}\\
+((b_{\half}y_{\half})x_0)b_0+b_{\half}((b_{\half}y_{\half})x_0)\Big)\\=
\\
(1-(a,b)) y_{\half}(x_0b_0)+(1-(a,b))y_{\half}(b_{\half}x_0) \\
-2\Big((y_{\half}b_0)(x_0b_0 ) +  (b_{\half}y_{\half})(x_0b_0 )\Big)\\
-2\Big((b_{\half}x_0)(y_{\half}b_0)+ (b_{\half}x_0)(b_{\half}y_{\half}) \Big).
\end{gather*}
\medskip

\noindent
(2)
Comparing the $\half$-components  we get,
\begin{gather*}
\textstyle{\half(a,b)(1-(a,b))y_{\half}x_0+(1-(a,b))(y_{\half}x_0)b_0}\\
-(a,b)(y_{\half}b_0)x_0-2((y_{\half}b_0)x_0)b_0
-2b_{\half}((b_{\half}y_{\half})x_0)\quad =\\
(1-(a,b)) y_{\half}(x_0b_0)  
-2(y_{\half}b_0)(b_0x_0)
-2(b_{\half}x_0)(b_{\half}y_{\half}).
\end{gather*}
\medskip

\noindent
(3)  This follows from comparing the $1$ plus $0$ components of both sides of (2).
\end{proof}

\begin{lemma}\label{1212}
Let $x_{\half}, y_{\half}\in A_{\half}(a),$ then
\begin{enumerate}
    \item 
 \begin{gather*}
\textstyle{\half(a,b)(1-(a,b))(x_{\half},y_{\half})a+(1-(a,b))(x_{\half}y_{\half})b_0+(1-(a,b))b_{\half}(x_{\half}y_{\half})}\\
-(a,b)(x_{\half},y_{\half}b_0)a-2(x_{\half}(y_{\half}b_0))b_0-2b_{\half}(x_{\half}(y_{\half}b_0))\\
-2 ((b_{\half}y_{\half})x_{\half})b_0-2b_{\half}((b_{\half}y_{\half})x_{\half})\\
=\textstyle{\half(a,b)(1-(a,b))x_{\half}y_{\half}+(1-(a,b))y_{\half}(x_{\half}b_0)+(1-(a,b))(b_{\half}x_{\half})y_{\half}-(a,b)x_{\half}(y_{\half}b_0)}\\
-2(x_{\half}b_0)(y_{\half}b_0)-2 (b_{\half}y_{\half})(x_{\half}b_0)\\
-2(b_{\half}x_{\half})(y_{\half}b_0)-2 (b_{\half}x_{\half})(b_{\half}y_{\half})
\end{gather*}

\item 
Comparing the $\half$-component in (1) we get
\begin{gather*}
(1-(a,b))b_{\half}(x_{\half}y_{\half})
+2(b_{\half}y_{\half})(x_{\half}b_0)
+2(b_{\half}x_{\half})(y_{\half}b_0)\\
=
(1-(a,b))(b_{\half}x_{\half})y_{\half} +2b_{\half}(x_{\half}(y_{\half}b_0))
 +2((b_{\half}y_{\half})x_{\half})b_0.
\end{gather*}

 \item 
Interchanging $x_{\half}$ and $y_{\half}$ in (2) we get:
\begin{enumerate}
\item
by subtracting:
\begin{gather*}
(1-(a,b))(b_{\half}x_{\half})y_{\half} +2b_{\half}(x_{\half}(y_{\half}b_0))
 +2((b_{\half}y_{\half})x_{\half})b_0\quad =\\
 (1-(a,b))(b_{\half}y_{\half})x_{\half} +2b_{\half}(y_{\half}(x_{\half}b_0))
 +2((b_{\half}x_{\half})y_{\half})b_0.
\end{gather*}

\item
By adding:
\begin{gather*}
2(1-(a,b))b_{\half}(x_{\half}y_{\half})
+4(b_{\half}y_{\half})(x_{\half}b_0)
+4(b_{\half}x_{\half})(y_{\half}b_0)\quad =
\\
(1-(a,b))\Big[(b_{\half}x_{\half})y_{\half}+(b_{\half}y_{\half})x_{\half}\Big]
+2b_{\half}(x_{\half}(y_{\half}b_0))+2b_{\half}(y_{\half}(x_{\half}b_0))\\
 +2((b_{\half}y_{\half})x_{\half})b_0+2((b_{\half}x_{\half})y_{\half})b_0\\\iff
 \\
   ((a,b)-1)\Big[(b_{\half}x_{\half})y_{\half}+(b_{\half}y_{\half})x_{\half}\Big]
+4(b_{\half}y_{\half})(x_{\half}b_0)
+4(b_{\half}x_{\half})(y_{\half}b_0)\quad =
\\
 2((a,b)-1)b_{\half}(x_{\half}y_{\half})+2b_{\half}(x_{\half}(y_{\half}b_0))+2b_{\half}(y_{\half}(x_{\half}b_0))\\
 +2((b_{\half}y_{\half})x_{\half})b_0+2((b_{\half}x_{\half})y_{\half})b_0
\end{gather*} 
\end{enumerate}
\item 
Comparing the $0$-components in (1) we get
\begin{gather*}
\textstyle{(1-(a,b))(x_{\half}y_{\half})b_0-\half(a,b)(1-(a,b))\Big(x_{\half}y_{\half}\Big)_0}\\
+2\Big((x_{\half}b_0)(y_{\half}b_0)\Big)_0+2 \Big((b_{\half}x_{\half})(b_{\half}y_{\half})\Big)_0
 \\
= (1-(a,b))\Big(y_{\half} (x_{\half}b_0)\Big)_0-(a,b)\Big(x_{\half}(y_{\half}b_0)\Big)_0\\
+2((x_{\half}(y_{\half}b_0))b_0+2\Big(b_{\half}((b_{\half}y_{\half})x_{\half})\Big)_0\\\iff
\\
(a,b)(1-(a,b))\Big(x_{\half}y_{\half}\Big)_0
-4\Big((x_{\half}b_0)(y_{\half}b_0)\Big)_0-4 \Big((b_{\half}x_{\half})(b_{\half}y_{\half})\Big)_0
 \\
= -2(1-(a,b))\Big(y_{\half} (x_{\half}b_0)\Big)_0+2(a,b)\Big(x_{\half}(y_{\half}b_0)\Big)_0\\
-4((x_{\half}(y_{\half}b_0))b_0-4\Big(b_{\half}((b_{\half}y_{\half})x_{\half})\Big)_0+2(1-(a,b))(x_{\half}y_{\half})b_0
\end{gather*}
 
 \item 
Interchanging $x_{\half}, y_{\half}$ in (4) and subtracting we get,
\begin{gather*}
\Big(y_{\half}(x_{\half}b_0)\Big)_0+2(x_{\half}(y_{\half}b_0))b_0+2\Big(b_{\half}((b_{\half}y_{\half})x_{\half})\Big)_0\\
=\Big(x_{\half}(y_{\half}b_0)\Big)_0+2(y_{\half}(x_{\half}b_0))b_0+2\Big(b_{\half}((b_{\half}x_{\half})y_{\half})\Big)_0
\end{gather*}

\item 
Interchanging $x_{\half}, y_{\half}$ in (4) and adding we  get
\begin{gather*}
2(1-(a,b))(x_{\half}y_{\half})b_0-(a,b)(1-(a,b))\Big(x_{\half}y_{\half}\Big)_0\\
+4\Big((x_{\half}b_0)(y_{\half}b_0)\Big)_0+4 \Big((b_{\half}x_{\half})(b_{\half}y_{\half})\Big)_0
 \\
= (1-2(a,b))\Big(y_{\half} (x_{\half}b_0)\Big)_0+(1-2(a,b))\Big(x_{\half}(y_{\half}b_0)\Big)_0\\
+2((x_{\half}(y_{\half}b_0))b_0+2((y_{\half}(x_{\half}b_0))b_0+2\Big(b_{\half}((b_{\half}y_{\half})x_{\half})\Big)_0+2\Big(b_{\half}((b_{\half}x_{\half})y_{\half})\Big)_0\\\iff
\\
 (a,b)((a,b)-1)\Big(x_{\half}y_{\half}\Big)_0+(2(a,b)-1)\Big[\Big(y_{\half} (x_{\half}b_0)\Big)_0+\Big(x_{\half}(y_{\half}b_0)\Big)_0\Big]\\
+4\Big((x_{\half}b_0)(y_{\half}b_0)\Big)_0+4 \Big((b_{\half}x_{\half})(b_{\half}y_{\half})\Big)_0
 \\
=-2(1-(a,b))(x_{\half}y_{\half})b_0+  \\
+2((x_{\half}(y_{\half}b_0))b_0+2((y_{\half}(x_{\half}b_0))b_0+2\Big(b_{\half}((b_{\half}y_{\half})x_{\half})\Big)_0+2\Big(b_{\half}((b_{\half}x_{\half})y_{\half})\Big)_0
\end{gather*}

\item 
If $(a,b)\ne 1,$ then
$((b_{\half}x_{\half})y_{\half})b_0+((b_{\half}y_{\half})x_{\half})b_0=\\
\half\Big[(b,y_{\half})x_{\half}b_0+(b,x_{\half})y_{\half}b_0\Big]+\frac 14(1-(a,b))(a,x_{\half}y_{\half})b_{\half}-\frac 14(b_0,x_{\half}y_{\half})b_{\half}.$

\item 
If $(a,b)\ne 1,$ then
$
b_{\half}((x_{\half}b_0)y_{\half})+(b_{\half}y_{\half} )(x_{\half}b_0)\\
=\half(b,y_{\half})x_{\half}b_0 -\frac 14(b,x_{\half})y_{\half}b_0+\frac 18(1-(a,b)(b,x_{\half})y_{\half} \\
   +\half(b_0, x_{\half}y_{\half})b_{\half} 
$
\end{enumerate}
\end{lemma}
\begin{proof}
(1)  Replacing $x=x_{\half}, y=y_{\half}$ in Lemma \ref{P}(2) we get
\begin{gather*}
(a,b)(1-(a,b))a(x_{\half}y_{\half})+b_0(x_{\half}y_{\half})+b_{\half}(x_{\half}y_{\half})-2\Big((a,b)b_0((ay_{\half})x_{\half})+(a,b)b_{\half}((ay_{\half})x_{\half})\Big)\\
-2\Big((a,b)a((b_0y_{\half})x_{\half})+b_0((b_0y_{\half})x_{\half})+b_{\half}((b_0y_{\half})x_{\half})\Big)\\
-2\Big((a,b)a((b_{\half}y_{\half})x_{\half})+b_0((b_{\half}y_{\half})x_{\half})+b_{\half}((b_{\half}y_{\half})x_{\half})\Big)\\
=(a,b)(1-(a,b))(ax_{\half})y_{\half}+(b_0x_{\half})y_{\half}+(b_{\half}x_{\half})y_{\half}-2\Big((a,b)(ax_{\half})(b_0y_{\half})+(a,b)(ax_{\half})(b_{\half}y_{\half}) \Big)\\
-2\Big((a,b)(b_0x_{\half})(ay_{\half})+(b_0x_{\half})(b_0y_{\half})+ (b_0x_{\half})(b_{\half}y_{\half})\Big)\\
-2\Big((a,b) (b_{\half}x_{\half})(ay_{\half})+ (b_{\half}x_{\half})(b_0y_{\half})+ (b_{\half}x_{\half})(b_{\half}y_{\half}) 
\\\iff
\\
\overbrace{\textstyle{\half(a,b)(1-(a,b))(x_{\half},y_{\half})a}}^{(1)}+\overbrace{(x_{\half}y_{\half})b_0}^{(2)}+\overbrace{b_{\half}(x_{\half}y_{\half})}^{(3)}-2\Big(\overbrace{\textstyle{\half(a,b)(x_{\half}y_{\half})b_0}}^{(2)}+\overbrace{\textstyle{\half(a,b)b_{\half}(x_{\half}y_{\half})}}^{(3)}\Big)\\
-2\Big(\overbrace{\textstyle{\half(a,b)(x_{\half},y_{\half}b_0)a}}^{(4)}+\overbrace{(x_{\half}(y_{\half}b_0))b_0}^{(5)}+\overbrace{b_{\half}(x_{\half}(y_{\half}b_0))}^{(6)}\\
-2\Big(\overbrace{\textstyle{\half(a,b)(b_{\half}y_{\half})x_{\half}}}^{(7)}+ \overbrace{((b_{\half}y_{\half})x_{\half})b_0}^{(8)}+\overbrace{b_{\half}((b_{\half}y_{\half})x_{\half})}^{(9)}\Big)\\
=\overbrace{\textstyle{\half(a,b)(1-(a,b))x_{\half}y_{\half}}}^{(10)}+\overbrace{y_{\half}(x_{\half}b_0)}^{(11)}+\overbrace{(b_{\half}x_{\half})y_{\half}}^{(12)}-2\Big(\overbrace{\textstyle{\half(a,b)x_{\half}(y_{\half}b_0)}}^{(13)}+\overbrace{\textstyle{\half(a,b)x_{\half}(b_{\half}y_{\half})}}^{(7)} \Big)\\
-2\Big(\overbrace{\textstyle{\half(a,b)y_{\half}(x_{\half}b_0)}}^{(14)} +(x_{\half}b_0)(y_{\half}b_0)+  (b_{\half}y_{\half})(x_{\half}b_0)\Big)\\
-2\Big(\overbrace{\textstyle{\half(a,b)(b_{\half}x_{\half})y_{\half}}}^{(12)}+ (b_{\half}x_{\half})(y_{\half}b_0)+ (b_{\half}x_{\half})(b_{\half}y_{\half}) \Big)
\end{gather*}
\begin{gather*}
\iff\\
\overbrace{\textstyle{\half(a,b)(1-(a,b))(x_{\half},y_{\half})a}}^{(1)}+\overbrace{(1-(a,b))(x_{\half}y_{\half})b_0}^{(2)}+\overbrace{(1-(a,b))b_{\half}(x_{\half}y_{\half})}^{(3)}\\
\overbrace{-(a,b)(x_{\half},y_{\half}b_0)a}^{(4)}\overbrace{-2(x_{\half}(y_{\half}b_0))b_0}^{(5)}\overbrace{-2b_{\half}(x_{\half}(y_{\half}b_0))}^{(6)}\\
-2\overbrace{((b_{\half}y_{\half})x_{\half})b_0}^{(8)}-2\overbrace{b_{\half}((b_{\half}y_{\half})x_{\half})}^{(9)}\\
=\overbrace{\textstyle{\half(a,b)(1-(a,b))x_{\half}y_{\half}}}^{(10)}+\overbrace{y_{\half}(x_{\half}b_0)}^{(11)}+\overbrace{(1-(a,b))(b_{\half}x_{\half})y_{\half}}^{(12)}-\overbrace{(a,b)x_{\half}(y_{\half}b_0)}^{(13)}\\
\overbrace{-(a,b)y_{\half}(x_{\half}b_0)}^{(14)} -2(x_{\half}b_0)(y_{\half}b_0)-2 (b_{\half}y_{\half})(x_{\half}b_0)\\
-2(b_{\half}x_{\half})(y_{\half}b_0)-2 (b_{\half}x_{\half})(b_{\half}y_{\half}).
\end{gather*}
\medskip

\noindent
(2)  Comparing the $\half$-component in (1) we get
\begin{gather*}
(1-(a,b))b_{\half}(x_{\half}y_{\half})
-2b_{\half}((y_{\half}b_0)x_{\half})
-2((b_{\half}y_{\half})x_{\half})b_0\\
=\quad (1-(a,b))(b_{\half}x_{\half})y_{\half}
-2(b_{\half}y_{\half})(x_{\half}b_0)
-2(b_{\half}x_{\half})(y_{\half}b_0).
\end{gather*}
\medskip

\noindent
(3)  This is obvious.
\medskip

\noindent
(4) Comparing the $0$-components in (1) we get
  \begin{gather*}
(1-(a,b))(x_{\half}y_{\half})b_0
-2(x_{\half}(y_{\half}b_0))b_0
-2\Big(b_{\half}((b_{\half}y_{\half})x_{\half})\Big)_0\\
=\textstyle{\half(a,b)(1-(a,b))\Big(x_{\half}y_{\half}\Big)_0+(1-(a,b))\Big(y_{\half}(x_{\half}b_0)\Big)_0-(a,b)\Big(x_{\half}(y_{\half}b_0)\Big)_0}\\
-2\Big((x_{\half}b_0)(y_{\half}b_0)\Big)_0
-2 \Big((b_{\half}x_{\half})(b_{\half}y_{\half})\Big)_0.
\end{gather*}
\medskip

\noindent
(5)  Interchanging $x_{\half}, y_{\half}$ in (4) we get,
\begin{gather*}
(1-(a,b))\Big(y_{\half}(x_{\half}b_0)\Big)_0-(a,b)\Big(x_{\half}(y_{\half}b_0)\Big)_0+2(x_{\half}(y_{\half}b_0))b_0+2\Big(b_{\half}((b_{\half}y_{\half})x_{\half})\Big)_0\\
=(1-(a,b))\Big(x_{\half}(y_{\half}b_0)\Big)_0-(a,b)\Big(y_{\half}(x_{\half}b_0)\Big)_0+2(y_{\half}(x_{\half}b_0))b_0+2\Big(b_{\half}((b_{\half}x_{\half})y_{\half})\Big)_0\\\iff
\\
\Big(y_{\half}(x_{\half}b_0)\Big)_0+2(x_{\half}(y_{\half}b_0))b_0+2\Big(b_{\half}((b_{\half}y_{\half})x_{\half})\Big)_0\\
=\Big(x_{\half}(y_{\half}b_0)\Big)_0+2(y_{\half}(x_{\half}b_0))b_0+2\Big(b_{\half}((b_{\half}x_{\half})y_{\half})\Big)_0.
\end{gather*}
\medskip

\noindent
(6) This is obvious.
\medskip

\noindent
(7) We multiply (2) by $b_0$ to get
\begin{gather*}
(1-(a,b))(b_{\half}(x_{\half}y_{\half}))b_0
+2((b_{\half}y_{\half})(x_{\half}b_0))b_0
+2((b_{\half}x_{\half})(y_{\half}b_0))b_0\\
=
(1-(a,b))\Big[((b_{\half}x_{\half})y_{\half})b_0+ ((b_{\half}y_{\half})x_{\half})b_0\Big]+2(b_{\half}(x_{\half}(y_{\half}b_0)))b_0.
 \end{gather*}
Using Lemma \ref{SSI}(2), and Lemma \ref{P1212}(5) we get
\begin{gather*}
\textstyle{(1-(a,b))(b_{\half}(x_{\half}y_{\half}))b_0
+\half(1-(a,b))\Big[(b,y_{\half})x_{\half}b_0+(b,x_{\half})y_{\half}b_0\Big]}\\
=
\textstyle{(1-(a,b))\Big[((b_{\half}x_{\half})y_{\half})b_0+ ((b_{\half}y_{\half})x_{\half})b_0\Big]+\half(1-(a,b))(b_0,x_{\half}y_{\half})b_{\half}.}
\end{gather*}
So if $(a,b)\ne 1,$ then, by Lemma \ref{SSI}(1),
\begin{gather*}
\textstyle{\frac 14(1-(a,b))(a,x_{\half}y_{\half})b_{\half}+\frac 14(b_0,x_{\half}y_{\half})b_{\half} +\half\Big[(b,y_{\half})x_{\half}b_0+(b,x_{\half})y_{\half}b_0\Big]}\\
\textstyle{=((b_{\half}x_{\half})y_{\half})b_0+ ((b_{\half}y_{\half})x_{\half})b_0+ \half(b_0,x_{\half}y_{\half})b_{\half}.}
\end{gather*}
\medskip

\noindent
(8)
We replace  $x_{\half}$ with $x_{\half}b_0$ in (2).  We  use Lemma \ref{SSI}(10) and Lemma \ref{P1212}((5)\&(7)), to get
\begin{gather*}
(1-(a,b))b_{\half}((x_{\half}b_0)y_{\half})
+2(b_{\half}y_{\half})((x_{\half}b_0)b_0)
+2(b_{\half}(x_{\half}b_0))(y_{\half}b_0)\\
=
(1-(a,b))(b_{\half}(x_{\half}b_0))y_{\half} +2b_{\half}((x_{\half}b_0)(y_{\half}b_0))
 +2((b_{\half}y_{\half})(x_{\half}b_0))b_0\\\iff
\\
\textstyle{(1-(a,b))b_{\half}(y_{\half}(x_{\half}b_0) )
+(1-(a,b))(b_{\half}y_{\half})(x_{\half}b_0)}
\textstyle{+\half(1-(a,b))(b,x_{\half})y_{\half}b_0}\\
=
\textstyle{\frac 18(1-(a,b)^2(b,x_{\half})y_{\half}+\frac 14(1-(a,b)(b,x_{\half})y_{\half}b_0}\\
\textstyle{+\half(1-(a,b))(b_0,x_{\half}y_{\half})b_{\half}
 +\half(1-(a,b))(b,y_{\half})x_{\half}b_0}\\\text{since $(a,b)\ne 1$}\implies
 \\
 b_{\half}(y_{\half}(x_{\half}b_0) )
+(b_{\half}y_{\half})(x_{\half}b_0)\\
=
\textstyle{\frac 18(1-(a,b)(b,x_{\half})y_{\half}-\frac 14(b,x_{\half})y_{\half}b_0}\\
\textstyle{\half(b_0,x_{\half}y_{\half})b_{\half}
 +\half(b,y_{\half})x_{\half}b_0.}
\end{gather*}
Hence by Lemma \ref{P1212}(7), the claim holds.
\end{proof}

\begin{lemma}\label{P1}
Let $b\in A$ be an axis of Jordan type half.  Then
\begin{enumerate}
\item 
$4(bx)(by)-(bx)y -(by)x-(b,y)bx -(b, x)by- (b,xy)b + b(xy)=0.$

\item 
\begin{gather*}
4(bx)(by)=4(((a,b)a+b_0+b_{\half})x)((a,b)a+b_0+b_{\half})y)\\
=4(a,b)^2(ax)(ay)+4(a,b)(ax)(yb_0)+4(a,b)(ay)(xb_0)+4(a,b)(ax)(yb_{\half})\\
+4(a,b)(ay)(xb_{\half})+4(xb_0)(yb_0)+4(xb_{\half})(yb_{\half})+4(xb_0)(yb_{\half})+4(yb_0)(xb_{\half})
\end{gather*}

\item 
$-(bx)y=-(a,b)(ax)y-(xb_0)y-(xb_{\half})y$  and   $-(by)x=-(a,b)(ay)x-(yb_0)x-(yb_{\half})x.$

\item 
$-(b,y)bx=-(b,y)(a,b)ax-(b,y)xb_0-(b,y)xb_{\half}$ and  $-(b,x)by=-(b,x)(a,b)ay-(b,x)yb_0-(b,x)yb_{\half}.$

\item 
$-(b,xy)b+b(xy)=-(b,xy)(a,b)a-(b,xy)b_0-(b,xy)b_{\half}+(a,b)a(xy)+(xy)b_0+(xy)b_{\half}.$
 
\end{enumerate}
\end{lemma}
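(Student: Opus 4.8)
The plan is to read part (1) as a restatement of the vanishing of the polynomial $P_b$, and then to obtain (2)--(5) purely by expanding each group of terms of $P_b(x,y)$ through the $a$-decomposition of $b$.

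First I would note that, after reordering its summands, the left-hand side of (1) is exactly $P_b(x,y)$ as defined at the start of \S\ref{SQP}. Since $b$ is an axis, Definition \ref{axis}(1) gives $(b,b)=1$, and since $b$ is of Jordan type half, Lemma \ref{QP}(3)(b) yields $P_b(x,y)=0$ for all $x,y\in A$. This disposes of (1) without computation.

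For (2)--(5) I would substitute $b=(a,b)a+b_0+b_{\half}$ (the $a$-decomposition, with $b_0\in A_0(a)$ and $b_{\half}\in A_{\half}(a)$, legitimate by Lemma \ref{O}) into the separate clumps of terms forming $P_b(x,y)$ and expand by bilinearity, using commutativity of $A$ to place each $b$-component on the right, so that $b_0 x=xb_0$ and $b_{\half}x=xb_{\half}$. Concretely: (2) is the expansion of the single product $(bx)(by)=\bigl((a,b)(ax)+xb_0+xb_{\half}\bigr)\bigl((a,b)(ay)+yb_0+yb_{\half}\bigr)$ into its nine cross terms; (3) applies $bx=(a,b)(ax)+xb_0+xb_{\half}$ to the linear terms $-(bx)y$ and $-(by)x$; (4) does the same for the scalar multiples $-(b,y)bx$ and $-(b,x)by$; and (5) substitutes $b$ into both $-(b,xy)b$ and $b(xy)$, keeping $(a,b)a(xy)$ as is and writing $b_0(xy)=(xy)b_0$, $b_{\half}(xy)=(xy)b_{\half}$.

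There is no conceptual obstacle here: the lemma simply records, clump by clump, the $a$-decomposition of the identity $P_b(x,y)=0$, in preparation for extracting component identities. The only place demanding care is the bookkeeping in (2), where one must keep the nine terms of $(bx)(by)$ distinct and order every factor consistently via commutativity.
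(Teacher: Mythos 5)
Your proposal is correct and follows the paper's own route exactly: part (1) is the identity $P_b(x,y)=0$ supplied by Lemma \ref{QP} (the paper cites precisely this), and parts (2)--(5) are the term-by-term expansions obtained by substituting $b=(a,b)a+b_0+b_{\half}$ and using bilinearity and commutativity, which the paper dismisses as ``obvious.'' You have merely spelled out the bookkeeping the paper omits; there is no gap and no divergence in method.
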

\begin{proof}
(1) follows from Lemma \ref{QP}, since $P_b(x,y)=0,$ for all $x,y\in A.$ Then (2)--(5) are obvious.

\end{proof}

\begin{remark}
In Lemmas \ref{Pa0},  \ref{P00} and \ref{P1212} ahead we replace $x,y$ by appropriate elements, we sum up parts (2),(3),(4) and (5) of Lemma \ref{P1} and equal the sum to zero.
\end{remark}

%
%
%
%

\begin{lemma}\label{Pa0}
Putting $x=x_0, y=a$ in Lemma \ref{P1} gives
\begin{enumerate}
\item 
$2b_{\half}(x_0b_0)=(1-(a,b))b_{\half}x_0+\half(b,x_0)b_{\half}.$

\item 
$2b_{\half}(b_{\half}x_0)=(a,b)(b,x_0)a+(a,b)x_0b_0.$

\item 
$2(x_0b_0)b_0=(1-(a,b))x_0b_0+(b,x_0)b_0.$

\item 
$(b_{\half}x_0)b_0=\frac 14(b,x_0)b_{\half}.$
%
%
\end{enumerate}
\end{lemma}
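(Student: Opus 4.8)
The plan is to follow the recipe named in the statement: set $x=x_0\in A_0(a)$ and $y=a$ in Lemma \ref{P1}, substitute the expansions in parts (2)--(5), and set the total sum equal to zero. The whole proof is then bookkeeping that exploits the special features of this substitution.

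First I would record the simplifications forced by $y=a$ and $x=x_0$: we have $ax_0=0$, $aa=a$, $ab_0=0$ (as $b_0\in A_0(a)$), $ab_{\half}=\half b_{\half}$, and $x_0a=ax_0=0$, so that $xy=x_0a=0$. Consequently Lemma \ref{P1}(5) contributes nothing, and every term of parts (2)--(4) carrying a factor $ax=ax_0$ drops out. The fusion rules supply the two remaining facts needed: $x_0b_0\in A_0(a)$, whence $a(x_0b_0)=0$, and $x_0b_{\half}\in A_{\half}(a)$, whence $a(x_0b_{\half})=\half x_0b_{\half}$. Feeding these into parts (2), (3) and (4) and adding, the surviving terms collapse (after using commutativity to identify $x_0b_{\half}=b_{\half}x_0$, and similarly for the other products) to the single relation
\begin{gather*}
-(1-(a,b))x_0b_{\half}+2(x_0b_{\half})b_{\half}+2(x_0b_0)b_{\half}\\
-(a,b)x_0b_0-(a,b)(b,x_0)a-\half(b,x_0)b_{\half}=0.
\end{gather*}

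Next I would split this according to $A=\ff a\oplus A_0(a)\oplus A_{\half}(a)$. The terms $x_0b_{\half}$, $(x_0b_0)b_{\half}$ and $(b,x_0)b_{\half}$ lie in $A_{\half}(a)$, while $(x_0b_{\half})b_{\half}$, $x_0b_0$ and $(b,x_0)a$ lie in $\ff a+A_0(a)$; so the equation separates cleanly into two. The $A_{\half}(a)$-part reads $2(x_0b_0)b_{\half}=(1-(a,b))x_0b_{\half}+\half(b,x_0)b_{\half}$, which is part (1), and the $\ff a+A_0(a)$-part reads $2(x_0b_{\half})b_{\half}=(a,b)x_0b_0+(a,b)(b,x_0)a$, which is part (2).

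Finally, parts (3) and (4) follow by combining these two with identities already in hand. For (3), I would take the $A_0(a)$-component of part (2), namely $2\big(b_{\half}(b_{\half}x_0)\big)_0=(a,b)x_0b_0$, and compare it with Lemma \ref{Q}(4); eliminating $\big(b_{\half}(b_{\half}x_0)\big)_0$ yields $2(x_0b_0)b_0=(1-(a,b))x_0b_0+(b,x_0)b_0$. For (4), I would feed part (1) into Lemma \ref{P}(4) (which relates $b_{\half}x_0$, $b_{\half}(x_0b_0)$ and $(b_{\half}x_0)b_0$); the $b_{\half}(x_0b_0)$ terms cancel and leave $(b_{\half}x_0)b_0=\frac14(b,x_0)b_{\half}$. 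I do not expect any genuine obstacle: the only real hazard is the sheer number of terms in Lemma \ref{P1}(2), so the main care is in checking that exactly the right terms vanish under the substitution and in keeping the $\ff a$-, $A_0(a)$- and $A_{\half}(a)$-components correctly sorted when separating the collapsed relation.
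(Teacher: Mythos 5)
Your proposal is correct and follows essentially the same route as the paper: substitute $x=x_0$, $y=a$ into Lemma \ref{P1}, use $ax_0=0$, $ab_0=0$, $ab_{\half}=\half b_{\half}$ and the fusion rules to collapse the sum to the same six-term relation, split it into its $A_{\half}(a)$- and $\ff a+A_0(a)$-components to get (1) and (2), and then derive (3) from the $A_0(a)$-component of (2) together with Lemma \ref{Q}(4) and (4) from (1) together with Lemma \ref{P}(4). The only nitpick is purely verbal: in the last step it is the $(1-(a,b))b_{\half}x_0$ terms that cancel after you use (1) to eliminate $b_{\half}(x_0b_0)$ from Lemma \ref{P}(4), but the computation you describe is the right one.
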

\begin{proof}
(1)\&(2):  By Lemma \ref{P1}(1),
\begin{gather*}
0=\\
4(a,b)^2(ax_0)(aa)+4(a,b)(ax_0)(ab_0)+4(a,b)(aa)(x_0b_0)+4(a,b)(ax_0)(ab_{\half})\\
+4(a,b)(aa)(x_0b_{\half})+4(x_0b_0)(ab_0)+4(x_0b_{\half})(ab_{\half})+4(x_0b_0)(ab_{\half})+4(ab_0)(x_0b_{\half})\\
-(a,b)(ax_0)a-(x_0b_0)a-(x_0b_{\half})a-(a,b)(aa)x_0-(ab_0)x_0-(ab_{\half})x_0\\
-(b,a)(a,b)ax_0-(b,a)x_0b_0-(b,a)x_0b_{\half}-(b,x_0)(a,b)aa-(b,x_0)ab_0-(b,x_0)ab_{\half}\\
-(b,x_0a)b+b(x_0a)\\
=\textstyle{2(a,b)b_{\half}x_0 
+2b_{\half}(b_{\half}x_0 ) +2b_{\half}(x_0b_0)-\half b_{\half}x_0-\half b_{\half}x_0}\\
\textstyle{-(a,b)x_0b_0-(a,b)b_{\half}x_0 -(a,b)(b,x_0)a-\half(b,x_0)b_{\half}.}
\end{gather*}
Hence
\begin{gather*}
\textstyle{((a,b)-1)b_{\half}x_0+2b_{\half}(x_0b_0)-\half(b,x_0)b_{\half}=0,}
\end{gather*}
and
\[
2b_{\half}(b_{\half}x_0)-(a,b)(b,x_0) a-(a,b)x_0b_0=0
\]
\medskip

\noindent
(3) Using (2) and Lemma \ref{Q}(4),
\begin{gather*}
(a,b)x_0b_0=x_0b_0+(b,x_0)b_0-2(x_0b_0)b_0\ \iff\\
2(x_0b_0)b_0=(1-(a,b))x_0b_0+(b,x_0)b_0.
\end{gather*}
\medskip

\noindent
(4)  By Lemma \ref{P}(3), $(1-(a,b))b_{\half}x_0-2b_{\half}(x_0b_0)+2(b_{\half}x_0)b_0=0,$  hence, by (1), $-\half(b,x_0)b_{\half}+2(b_{\half}x_0)b_0 =0.$  This shows (4).
%
%
\end{proof}

We summarize the results of Lemma \ref{Pa0} and part of Lemma \ref{P} in the following theorem.

\begin{thm}\label{SI}
Let $x_0\in A_0(a)$ and $x_{\half}\in A_{\half}(a),$ then
\begin{enumerate}
\item 
$(x_{\frac 12}b_0)b_0=\frac 12(1-(a,b))x_{\frac 12}b_0.$

\item 
 $(x_0b_0)b_0=\frac 12(1-(a,b))x_0b_0+\frac 12(b,x_0)b_0.$

 \item 
 $(x_0b_{\frac 12}) b_{\frac 12}=\frac 12(a,b)(b,x_0)a+\frac 12(a,b)x_0b_0.$

 \item
 \begin{enumerate}
 \item 
$ \Big(x_{\half}b_{\half} \Big)_0b_{\half} =\frac 14(a,b)(1-(a,b))x_{\half}-\half(a,b)x_{\half}b_0+\frac 14(b,x_{\half})b_{\half} .$
\item
$(x_{\half}b_{\half})b_{\half}= \frac 14(a,b)(1-(a,b))x_{\half}-\half(a,b)x_{\half}b_0+\half(b,x_{\half})b_{\half}.$ 
\end{enumerate}

 \item 
$(x_{\frac 12}b_0)b_{\frac 12}=\frac 14(1-(a,b))(b,x_{\frac 12})a+\frac 14(b,x_{\frac 12})b_0.$ 

  \item 
$(x_{\frac 12}b_{\frac 12})b_0=\frac 12(1-(a,b))\Big(b_{\frac 12}x_{\frac 12}\Big)_0+\frac 14 (b,x_{\frac 12})b_0.$

\item
$(x_0b_0)b_{\frac 12}=\frac 12(1-(a,b))b_{\frac 12}x_0+\frac 14(b,x_0)b_{\frac 12}.$

\item
$(x_0b_{\frac 12})b_0=\frac 14(b,x_0)b_{\frac 12}.\quad$ 
\end{enumerate}
\end{thm}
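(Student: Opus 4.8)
The plan is to recognize this theorem as a bookkeeping collation: each of its eight statements (with item~(4) split into two) has already been established — up to commuting a product and/or dividing by $2$ — in Lemma~\ref{Pa0} or in the indicated parts of Lemma~\ref{P}. The sentence preceding the statement announces exactly this, so no new computation is needed. The only structural fact I will invoke beyond those lemmas is that $A$ is commutative, allowing me to rewrite $uv=vu$ freely and, in particular, to reorient a product such as $b_{\half}(b_{\half}x_0)$ as $(x_0b_{\half})b_{\half}$ so as to match the form in which the theorem is phrased. I will also silently identify $x_{\frac12}$ with $x_{\half}$, these being two notations for the same element of $A_{\half}(a)$.

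Concretely, I would proceed item by item. Statement~(1) is literally Lemma~\ref{P}(\ref{6b}). Statement~(2) is Lemma~\ref{Pa0}(3) after dividing by $2$. For~(3) I commute $(x_0b_{\half})b_{\half}=b_{\half}(b_{\half}x_0)$ and halve Lemma~\ref{Pa0}(2). Items~(4)(a) and~(4)(b) are Lemma~\ref{P}(\ref{6c}) and~(\ref{6d}) respectively, after writing $\big(x_{\half}b_{\half}\big)_0b_{\half}=b_{\half}\big(b_{\half}x_{\half}\big)_0$ and $(x_{\half}b_{\half})b_{\half}=b_{\half}(b_{\half}x_{\half})$. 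Item~(5) is Lemma~\ref{P}(\ref{7c}) with $(x_{\half}b_0)b_{\half}=b_{\half}(x_{\half}b_0)$; item~(6) is Lemma~\ref{P}(\ref{7f}) with $x_{\half}b_{\half}=b_{\half}x_{\half}$; item~(7) is Lemma~\ref{Pa0}(1) halved, using $(x_0b_0)b_{\half}=b_{\half}(x_0b_0)$; and item~(8) is Lemma~\ref{Pa0}(4) with $x_0b_{\half}=b_{\half}x_0$.

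Since each line is an immediate transcription, there is no genuine obstacle at this stage: all of the real work — expanding the Frobenius/fusion identity $P_b(x,y)=0$ supplied by Lemma~\ref{P1}, substituting the special arguments $x=x_0,\,x_{\half}$ and $y=a$, projecting onto the $\ff a$, $A_0(a)$ and $A_{\half}(a)$ components, and feeding in the scalar data of Lemma~\ref{Q} — was already carried out in Lemmas~\ref{P}, \ref{Pa0} and their predecessors. The one point where I would pause is to confirm that the two independent derivations feeding Lemma~\ref{P} are mutually consistent (for instance, that Lemma~\ref{P}(\ref{6b}) agrees with what one extracts by comparing with Lemma~\ref{Q}(5)(b)); but that consistency is precisely what was exploited in proving those parts, so nothing further must be checked here. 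The proof therefore reduces to assembling the references in the order above, inserting a commutation and a factor of $\tfrac12$ wherever indicated.
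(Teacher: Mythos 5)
Your proposal is correct and is essentially identical to the paper's own proof, which likewise just collates Lemma \ref{Pa0} and the cited parts of Lemma \ref{P} (the paper's attributions: (1) is Lemma \ref{P}(\ref{6b}); (2), (3), (7), (8) come from Lemma \ref{Pa0}; (4) from Lemma \ref{P}(\ref{6c})--(\ref{6d}); (5) from Lemma \ref{P}(\ref{7c}); (6) from Lemma \ref{P}(\ref{7f})). Your bookkeeping is if anything slightly more precise than the paper's, which cites only (\ref{6d}) for item (4) and labels item (5) as (\ref{7b}) rather than (\ref{7c}).
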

\begin{proof}
(1) is Lemma \ref{P}(4b). (2), (3), (7) and (8) come from Lemma \ref{Pa0}.  (4) is Lemma \ref{P}(4)(c)\&(d)), (5) is Lemma \ref{P}(5c) and (6) is Lemma \ref{P}(5d).
\end{proof}

\begin{lemma}\label{P00}, 
Putting $x=x_0$ and $y=y_0$ in Lemma \ref{P1} we get
\begin{enumerate}
\item\label{P001} 
\begin{gather*}
0=\\
4(b_{\half}y_0 )(x_0b_0)+4(b_{\half}x_0 )(y_0b_0)
-(b_{\half}x_0 )y_0-(b_{\half}y_0 )x_0\\
-(b,y_0)b_{\half}x_0-(b,x_0)b_{\half}y_0-(b,x_0y_0)b_{\half}+b_{\half}(x_0y_0). 
\end{gather*}

\item\label{P002} 
Hence
\begin{gather*}
b_{\half}(x_0y_0)-(b_{\half}x_0 )y_0-(b_{\half}y_0 )x_0\\
=-4(b_{\half}y_0 )(x_0b_0)-4(b_{\half}x_0 )(y_0b_0)
+(b,y_0)b_{\half}x_0+(b,x_0)b_{\half}y_0
+(b,x_0y_0)b_{\half}.  
\end{gather*}

\item\label{P003} 
$ ((b_{\half}y_0)x_0)b_0+((b_{\half}x_0)y_0)b_0=\frac 14(b,x_0y_0)b_{\half}.$

\item\label{P004} 
\begin{gather*}
4(b_{\half}x_0 )(b_{\half}y_0)+4(x_0b_0)(y_0b_0)
-y_0(x_0b_0)-x_0(y_0b_0)+(x_0y_0)b_0\\
=(b,y_0)x_0b_0+(b,x_0)y_0b_0
 +(b,x_0y_0)b_0 +(b,x_0y_0)(a,b)a.
\end{gather*}

\end{enumerate}
\end{lemma}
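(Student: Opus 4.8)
The plan is to specialize the component expansion of $P_b(x,y)=0$ — valid by Lemma~\ref{P1}(1) since $b$ is an axis — to $x=x_0$ and $y=y_0$, and then separate the resulting relation into its $a$-eigencomponents. The first thing I would record is how much collapses: since $x_0,y_0\in A_0(a)$ we have $ax_0=ay_0=0$, which annihilates every term of Lemma~\ref{P1}(2)--(4) carrying a factor $ax_0$ or $ay_0$; and since $x_0y_0\in A_0(a)$ by the fusion rules we also have $a(x_0y_0)=0$, killing the term $(a,b)a(xy)$ of Lemma~\ref{P1}(5) while leaving the scalar term $-(b,x_0y_0)(a,b)a$ intact. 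Using commutativity to write each surviving product with its $b_0$-- or $b_{\half}$--factor first, I am left with a single relation whose terms are products of $x_0,y_0,b_0,b_{\half}$.

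Next I would sort these terms by $a$-eigencomponent, reading the component of each off the fusion rules (so, e.g., $(b_{\half}y_0)(x_0b_0)\in A_{\half}(a)$, while $(b_{\half}x_0)(b_{\half}y_0)$ and $(x_0b_0)(y_0b_0)$ lie in $\ff a+A_0(a)$, the former contributing an $\ff a$--part through $(a,(b_{\half}x_0)(b_{\half}y_0))$). Projecting the relation onto $A_{\half}(a)$ reproduces part~(\ref{P001}) verbatim, and isolating the three ``derivation-defect'' terms $b_{\half}(x_0y_0),\,(b_{\half}x_0)y_0,\,(b_{\half}y_0)x_0$ on one side rewrites it as part~(\ref{P002}). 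Projecting instead onto $\ff a+A_0(a)$ collects $4(b_{\half}x_0)(b_{\half}y_0)$ and $4(x_0b_0)(y_0b_0)$ together with the scalar contribution $-(b,x_0y_0)(a,b)a$ from $-(b,x_0y_0)b$, and after transposing the $(b,\cdot)$--weighted terms gives part~(\ref{P004}).

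The remaining and most delicate step is part~(\ref{P003}). My plan is to multiply the $A_{\half}(a)$--identity~(\ref{P001}) on the right by $b_0$. The terms linear in the decorations collapse at once: by Lemma~\ref{Pa0}(4), $(b_{\half}x_0)b_0=\tfrac{1}{4}(b,x_0)b_{\half}$, $(b_{\half}y_0)b_0=\tfrac{1}{4}(b,y_0)b_{\half}$ and $(b_{\half}(x_0y_0))b_0=\tfrac{1}{4}(b,x_0y_0)b_{\half}$, while $b_{\half}b_0=\tfrac{1}{2}(1-(a,b))b_{\half}$ by Lemma~\ref{Q}(3)(e). After these substitutions the only surviving unknowns are the two target terms $((b_{\half}x_0)y_0)b_0+((b_{\half}y_0)x_0)b_0$ and the two quartic terms produced by $4(b_{\half}y_0)(x_0b_0)$ and $4(b_{\half}x_0)(y_0b_0)$.

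The hard part will be those quartic terms $((b_{\half}y_0)(x_0b_0))b_0$ and $((b_{\half}x_0)(y_0b_0))b_0$: each is a single product of an \emph{arbitrary} half-element with $b_0$, and none of the reductions in Theorem~\ref{SI} or Lemma~\ref{Pa0} applies to it directly, since those only simplify products decorated by $b_0,b_{\half}$ in the rigid shapes $(\,\cdot\,b_0)b_0$ and $(\,\cdot\,b_{\half})b_0$. I expect to clear them by first proving the auxiliary identity
\[
((b_{\half}y_0)(x_0b_0))b_0+((b_{\half}x_0)(y_0b_0))b_0=\tfrac{1}{8}(b,x_0)(b,y_0)b_{\half}+\tfrac{1}{8}(1-(a,b))(b,x_0y_0)b_{\half},
\]
after which part~(\ref{P003}) follows by collecting coefficients (note that, by Theorem~\ref{SI}(8) applied to $x_0y_0\in A_0(a)$, the desired right-hand side is exactly $((x_0y_0)b_{\half})b_0$, so part~(\ref{P003}) amounts to saying that the derivation-defect, multiplied by $b_0$, vanishes). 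This auxiliary identity I would in turn derive by feeding the quadratic reductions of Theorem~\ref{SI}(1),(2),(8) into a further specialization of the $P_b$-identity (replacing one argument by the $A_0(a)$--element $x_0b_0$), or into Lemma~\ref{P}; getting the numerical coefficients to balance there is where the real bookkeeping lies.
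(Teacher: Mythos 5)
Your treatment of parts (\ref{P001}), (\ref{P002}) and (\ref{P004}) is correct and is the same as the paper's: specialize Lemma \ref{P1} to $x=x_0,y=y_0$, use $ax_0=ay_0=0$ and $a(x_0y_0)=0$ to collapse the expansion, and read off the $A_{\half}(a)$--component and the $\ff a+A_0(a)$--component. The problem is part (\ref{P003}), where your argument has a genuine gap. Multiplying (\ref{P001}) by $b_0$ and reducing the low-degree terms leaves, as you say, the two quartic terms, and your proposed auxiliary identity
$((b_{\half}y_0)(x_0b_0))b_0+((b_{\half}x_0)(y_0b_0))b_0=\tfrac{1}{8}(b,x_0)(b,y_0)b_{\half}+\tfrac{1}{8}(1-(a,b))(b,x_0y_0)b_{\half}$
is, modulo (\ref{P001}), \emph{exactly equivalent} to (\ref{P003}) itself --- you have only restated the claim, not reduced it. Moreover the route you sketch for proving it does not close up: replacing $x_0$ by $x_0b_0$ in (\ref{P001}) turns $4(b_{\half}y_0)((x_0b_0)b_0)$ into $2(1-(a,b))(b_{\half}y_0)(x_0b_0)+(\text{known})$ via Theorem \ref{SI}(2), i.e.\ it reproduces the \emph{unmultiplied} quartic term rather than its $b_0$--multiple, and it also introduces the fresh cubic unknown $b_{\half}((x_0b_0)y_0)$ through the last term of Lemma \ref{P1}(5). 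So the ``real bookkeeping'' you defer is not bookkeeping; it is the whole difficulty.

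The idea you are missing is the one the paper uses: bring in a \emph{second, independent} identity that contains the same quartic block. Lemma \ref{P}(\ref{10b}) --- obtained from Seress's lemma for $b$ (Lemma \ref{P}(\ref{9})) by symmetrizing in $x_0,y_0$ --- reads
$2b_{\half}(x_0y_0)+4(b_{\half}y_0)(x_0b_0)+4(b_{\half}x_0)(y_0b_0)=(1+(a,b))\big[(b_{\half}x_0)y_0+(b_{\half}y_0)x_0\big]+2b_{\half}((y_0b_0)x_0)+2b_{\half}((x_0b_0)y_0)+2((b_{\half}y_0)x_0)b_0+2((b_{\half}x_0)y_0)b_0$,
and substituting the value of $4(b_{\half}y_0)(x_0b_0)+4(b_{\half}x_0)(y_0b_0)$ from your part (\ref{P002}) cancels the quartic block entirely. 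What remains involves only terms of the shapes $b_{\half}z_0$, $(z_{\half})b_0$ and $z_{\half}$, every one of which reduces under multiplication by $b_0$ via $(b_{\half}z_0)b_0=\tfrac14(b,z_0)b_{\half}$ and $(z_{\half}b_0)b_0=\tfrac12(1-(a,b))z_{\half}b_0$; multiplying by $b_0$ and computing $(b,(y_0b_0)x_0)=\tfrac12(1-(a,b))(b_0,x_0y_0)+\tfrac12(b,x_0)(b,y_0)$ then yields a linear equation in $T:=((b_{\half}y_0)x_0)b_0+((b_{\half}x_0)y_0)b_0$ with coefficient $(a,b)+(1-(a,b))=1$, whence $T=\tfrac14(b,x_0y_0)b_{\half}$. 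Without an input of this kind from the Seress side, your plan for (\ref{P003}) does not terminate.
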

\begin{proof}
(\ref{P001})\&(\ref{P004})  By Lemma \ref{P1},
\begin{gather*}
0=\\
4(a,b)^2(ax_0)(ay_0)+4(a,b)(ax_0)(y_0b_0)+4(a,b)(ay_0)(x_0b_0)+4(a,b)(ax_0)(y_0b_{\half})\\
+4(a,b)(ay_0)(x_0b_{\half})+4(x_0b_0)(y_0b_0)+4(x_0b_{\half})(y_0b_{\half})+4(x_0b_0)(y_0b_{\half})+4(y_0b_0)(x_0b_{\half})\\
-(a,b)(ax_0)y_0-(x_0b_0)y_0-(x_0b_{\half})y_0-(a,b)(ay_0)x_0-(y_0b_0)x_0-(y_0b_{\half})x_0\\
-(b,y_0)(a,b)ax_0-(b,y_0)(x_0b_0)-(b,y_0)x_0b_{\half}-(b,x_0)(a,b)ay_0-(b,x_0)(y_0b_0)-(b,x_0)y_0b_{\half}\\
-(b,x_0y_0)(a,b)a-(b,x_0y_0)b_0-(b,x_0y_0)b_{\half}+(a,b)a(x_0y_0)+(x_0y_0)b_0+(x_0y_0)b_{\half}\\=
\\
4(x_0b_0)(y_0b_0)+4(x_0b_{\half})(y_0b_{\half})+4(x_0b_0)(y_0b_{\half})+4(y_0b_0)(x_0b_{\half})\\
-(x_0b_0)y_0-(x_0b_{\half})y_0-(y_0b_0)x_0-(y_0b_{\half})x_0\\
-(b,y_0)(x_0b_0)-(b,y_0)x_0b_{\half}-(b,x_0)(y_0b_0)-(b,x_0)y_0b_{\half}\\
-(b,x_0y_0)(a,b)a-(b,x_0y_0)b_0-(b,x_0y_0)b_{\half}+(x_0y_0)b_0+(x_0y_0)b_{\half}
\end{gather*}

Hence
\begin{gather*}
4(b_{\half}y_0 )(x_0b_0)+4(b_{\half}x_0 )(y_0b_0)-(b_{\half}x_0 )y_0-(b_{\half}y_0 )x_0\\
-(b,y_0)b_{\half}x_0-(b,x_0)b_{\half}y_0-(b,x_0y_0)b_{\half}+b_{\half}(x_0y_0)=0. 
\end{gather*}
This shows (1) and similarly (4) holds.
\medskip

\noindent
(\ref{P002})  This immediate from (\ref{P001}).
\medskip

\noindent
(\ref{P003})  By Lemma \ref{P}(8b) we have,
\begin{gather*}
2b_{\half}(x_0y_0)+4 (b_{\half}y_0)(x_0b_0)
+4(b_{\half}x_0)(b_0y_0)\\
=(1+(a,b))(b_{\half}x_0)y_0+(1+(a,b))(b_{\half}y_0)x_0+2b_{\half}((y_0b_0)x_0)+2b_{\half}((x_0b_0)y_0)\\
 +2((b_{\half}y_0)x_0)b_0+2((b_{\half}x_0)y_0)b_0.
\end{gather*}

Hence, by (\ref{P002}),
\begin{gather*}
b_{\half}(x_0y_0)-4(b_{\half}y_0 )(x_0b_0)-4(b_{\half}x_0 )(y_0b_0)
+(b,y_0)b_{\half}x_0+(b,x_0)b_{\half}y_0
+(b,x_0y_0)b_{\half}\\
+4 (b_{\half}y_0)(x_0b_0)
+4(b_{\half}x_0)(b_0y_0)\\
=(a,b)(b_{\half}x_0)y_0+(a,b)(b_{\half}y_0)x_0+2b_{\half}((y_0b_0)x_0)+2b_{\half}((x_0b_0)y_0)\\
 +2((b_{\half}y_0)x_0)b_0+2((b_{\half}x_0)y_0)b_0\\\iff
 \\
 b_{\half}(x_0y_0) 
+(b,y_0)b_{\half}x_0+(b,x_0)b_{\half}y_0
+(b,x_0y_0)b_{\half} \\
=(a,b)(b_{\half}x_0)y_0+(a,b)(b_{\half}y_0)x_0+2b_{\half}((y_0b_0)x_0)+2b_{\half}((x_0b_0)y_0)\\
 +2((b_{\half}y_0)x_0)b_0+2((b_{\half}x_0)y_0)b_0.
\end{gather*}
Multiplying by $b_0$  using $(z_{\half}b_0)b_0=\half(1-(a,b))z_{\half}b_0,\ \forall z_{\half}\in A_{\half}(a),$ and $(b_{\half}z_0)b_0=\frac 14(b,z_0)b_0,\ \forall z_0\in A_0(a),$ we get
\begin{gather*}
\textstyle{ \frac 14(b,x_0y_0)b_{\half}+\frac 12 (b,y_0)(b,x_0)b_{\half} 
+\half(1-(a,b))(b,x_0y_0)b_{\half}} \\
=
(a,b)(((b_{\half}x_0)y_0))b_0+(a,b)((b_{\half}y_0)x_0)b_0+(b,(y_0b_0)x_0)b_{\half}\\
+(1-(a,b))((b_{\half}y_0)x_0)b_0+(1-(a,b))(((b_{\half}x_0)y_0))b_0.
\end{gather*}
Now we use $(y_0b_0)b_0=\half(1-(a,b))y_0b_0+\half(b,y_0)b_0$ to see that
\begin{gather*}
\textstyle{(b,(y_0b_0)x_0)=((y_0b_0)b_0,x_0)=\half(1-(a,b))(y_0b_0,x_0)+\half(b,y_0)(b_0,x_0)}\\
=\textstyle{\half(1-(a,b))(b_0,x_0y_0)+\half(b,x_0)(b,y_0),}
\end{gather*}
so after cancellation we are left with 
$\frac 14(b,x_0y_0)b_{\half}
=((b_{\half}y_0)x_0)b_0+((b_{\half}x_0)y_0)b_0.$
%
\end{proof}




\begin{lemma}\label{P120}
Putting $x=x_{\half}$ and $y=y_0$ in Lemma \ref{P1} we get
\begin{enumerate}
\item 
\begin{gather*}
0=\\
(2(a,b)-1)x_{\half}(y_0b_0)
+4(x_{\half}b_0)(y_0b_0)+4(x_{\half}b_{\half})( b_{\half}y_0)
-(x_{\half}b_0)y_0\\
\textstyle{-\half(b,y_0)(a,b)x_{\half}-(b,y_0)x_{\half}b_0-(b,x_{\half})b_{\half}y_0 
-(b,x_{\half}y_0)b_{\half}+(x_{\half}y_0)b_0.}
\end{gather*}

\item 
\begin{gather*} 
0=\\
b_{\half}(x_{\half}y_0) +(2(a,b)-1) (b_{\half}y_0)x_{\half}-(b_{\half}x_{\half} )y_0
+4(x_{\half}b_0)(b_{\half}y_0) \\
+4 (b_{\half}x_{\half} )(y_0b_0)-(b,y_0)x_{\half}b_{\half}-(b,x_{\half})y_0b_0 
-(b,x_{\half}y_0)(a,b)a-(b,x_{\half}y_0)b_0.
\end{gather*}
\end{enumerate}
\end{lemma}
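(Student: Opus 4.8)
The plan is to substitute $x=x_{\half}$ and $y=y_0$ directly into the identity of Lemma \ref{P1}(1) and to expand each of its four groups of terms using parts (2)--(5) of that lemma, following the same recipe as in the proofs of Lemmas \ref{Pa0} and \ref{P00}. Since $x_{\half}\in A_{\half}(a)$ and $y_0\in A_0(a)$, I would first record the elementary reductions $ax_{\half}=\half x_{\half}$, $ay_0=0$ and $aa=a$. These kill every summand of $4(bx_{\half})(by_0)$ carrying a factor $ay_0$, collapsing the nine products of Lemma \ref{P1}(2) to six, and they likewise trim the other three groups; in particular the two ``$a$-transported'' pieces $-(a,b)(ax_{\half})y_0$ (from part (3)) and $(a,b)a(x_{\half}y_0)$ (from part (5)) equal $-\half(a,b)x_{\half}y_0$ and $+\half(a,b)x_{\half}y_0$ respectively and cancel, using that $x_{\half}y_0\in A_{\half}(a)$ so $a(x_{\half}y_0)=\half x_{\half}y_0$.

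The crux is then to sort every surviving term into its $\ff a$-, $A_0(a)$- and $A_{\half}(a)$-component via the fusion rules. The incidences needed are that $A_{\half}(a)A_0(a)\subseteq A_{\half}(a)$, that $A_{\half}(a)A_{\half}(a)\subseteq\ff a+A_0(a)$, and that $A_0(a)\big(\ff a+A_0(a)\big)\subseteq A_0(a)$. Thus $x_{\half}(y_0b_0)$, $(x_{\half}b_0)(y_0b_0)$ and $(x_{\half}y_0)b_0$ lie purely in $A_{\half}(a)$, whereas $x_{\half}(y_0b_{\half})$, $(x_{\half}b_0)(y_0b_{\half})$ and $(x_{\half}y_0)b_{\half}$ lie in $\ff a+A_0(a)$; here one also uses that $(x_{\half}b_{\half})(y_0b_0)\in A_0(a)$, since the $\ff a$-part of $x_{\half}b_{\half}$ is annihilated by $y_0b_0\in A_0(a)$.

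Because $A=\ff a\oplus A_0(a)\oplus A_{\half}(a)$, the single relation $\text{(sum)}=0$ splits into independent identities on each summand. Reading off the $A_{\half}(a)$-part gives statement (1) (the pieces $2(a,b)x_{\half}(y_0b_0)$ and $-(y_0b_0)x_{\half}$ coalescing into the coefficient $2(a,b)-1$), and reading off the $\ff a\oplus A_0(a)$-part gives statement (2) (there $2(a,b)(b_{\half}y_0)x_{\half}$ and $-(b_{\half}y_0)x_{\half}$ coalesce into $(2(a,b)-1)(b_{\half}y_0)x_{\half}$). I expect no conceptual obstacle: the entire difficulty is bookkeeping---keeping the signs straight across the roughly twenty terms and, above all, assigning each product to the correct eigenspace before projecting, since a single misclassified term would corrupt both (1) and (2).
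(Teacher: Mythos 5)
Your proposal is correct and follows the same route as the paper: substitute $x=x_{\half}$, $y=y_0$ into Lemma \ref{P1}, expand via parts (2)--(5), simplify using $ax_{\half}=\half x_{\half}$, $ay_0=0$ (which in particular cancels $-\half(a,b)x_{\half}y_0$ against $+\half(a,b)x_{\half}y_0$), and then project onto the $A_{\half}(a)$-component for (1) and the $\ff a\oplus A_0(a)$-component for (2). Your eigenspace assignments of the surviving products all agree with the fusion rules and with the paper's bookkeeping, so there is nothing to add.
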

\begin{proof}
By Lemma \ref{P1}, replacing $x$ with $x_{\half}$ and $y$ with $y_0,$
\begin{gather*}
0=\\
4(a,b)^2(ax_{\half})(ay_0)+4(a,b)(ax_{\half})(y_0b_0)+4(a,b)(ay_0)(x_{\half}b_0)+4(a,b)(ax_{\half})(y_0b_{\half})\\+4(a,b)(ay_0)(x_{\half}b_{\half})+4(x_{\half}b_0)(y_0b_0)+4(x_{\half}b_{\half})(y_0b_{\half})+4(x_{\half}b_0)(y_0b_{\half})+4(y_0b_0)(x_{\half}b_{\half})\\
-(a,b)(ax_{\half})y_0-(x_{\half}b_0)y_0-(x_{\half}b_{\half})y_0-(a,b)(ay_0)x_{\half}-(y_0b_0)x_{\half}-(y_0b_{\half})x_{\half}\\
-(b,y_0)(a,b)ax_{\half}-(b,y_0)x_{\half}b_0-(b,y_0)x_{\half}b_{\half}-(b,x_{\half})(a,b)ay_0-(b,x_{\half})y_0b_0-(b,x_{\half})y_0b_{\half}\\
-(b,x_{\half}y_0)(a,b)a-(b,x_{\half}y_0)b_0-(b,x_{\half}y_0)b_{\half}+(a,b)a(x_{\half}y_0)+(x_{\half}y_0)b_0+(x_{\half}y_0)b_{\half}\\=
\end{gather*}
\begin{gather*}
2(a,b)x_{\half}(y_0b_0)+2(a,b)x_{\half}(b_{\half}y_0)\\
+4(x_{\half}b_0)(y_0b_0)+4(x_{\half}b_{\half})( b_{\half}y_0)+4(x_{\half}b_0)(b_{\half}y_0 )+4(y_0b_0)(x_{\half}b_{\half})\\
\textstyle{-\half(a,b)x_{\half}y_0-(x_{\half}b_0)y_0-(x_{\half}b_{\half})y_0-x_{\half}(y_0b_0) -(b_{\half}y_0 )x_{\half}}\\
\textstyle{-\half(b,y_0)(a,b)x_{\half}-(b,y_0)x_{\half}b_0-(b,y_0)x_{\half}b_{\half}-(b,x_{\half})y_0b_0-(b,x_{\half})b_{\half}y_0} \\
\textstyle{-(b,x_{\half}y_0)(a,b)a-(b,x_{\half}y_0)b_0-(b,x_{\half}y_0)b_{\half}+\half(a,b)x_{\half}y_0+(x_{\half}y_0)b_0+(x_{\half}y_0)b_{\half}.}
\end{gather*}
Hence, since the $\half$-component is $0,$
\begin{gather*}
0=\\
2(a,b)x_{\half}(y_0b_0)
+4(x_{\half}b_0)(y_0b_0)+4(x_{\half}b_{\half})( b_{\half}y_0)\\
\textstyle{-\half(a,b)x_{\half}y_0-(x_{\half}b_0)y_0-x_{\half}(y_0b_0)}\\
\textstyle{-\half(b,y_0)(a,b)x_{\half}-(b,y_0)x_{\half}b_0-(b,x_{\half})b_{\half}y_0} \\
\textstyle{-(b,x_{\half}y_0)b_{\half}+\half(a,b)x_{\half}y_0+(x_{\half}y_0)b_0.}
\end{gather*}
And, since the $1$-component plus the $0$-compoments are $0,$
\begin{gather*}
0=\\
2(a,b)x_{\half}(b_{\half}y_0)
+4(x_{\half}b_0)(b_{\half}y_0)+4(y_0b_0)(x_{\half}b_{\half})\\
-(x_{\half}b_{\half})y_0 -(b_{\half}y_0 )x_{\half}
-(b,y_0)x_{\half}b_{\half}-(b,x_{\half})y_0b_0 \\
-(b,x_{\half}y_0)(a,b)a-(b,x_{\half}y_0)b_0+(x_{\half}y_0)b_{\half}.\qedhere
\end{gather*}
\medskip

\noindent
\end{proof} 

\begin{lemma}\label{SSI}
 \begin{enumerate}
    \item
$(b_{\half}(x_{\half}y_{\half}))b_0=\frac 14(1-(a,b))(a,x_{\half}y_{\half})b_{\half}+\frac 14(b_0,x_{\half}y_{\half})b_{\half}.$

\item
$(b_{\half}(x_{\half}(y_{\half}b_0))b_0=\frac 14(1-(a,b))(b_0,x_{\half}y_{\half})b_{\half}.$

\item 
$(a,(x_{\half}b_0)y_{\half})=\half(b_0,x_{\half}y_{\half}).$

\item 
$(b_0,(x_{\half}b_0)y_{\half})=\half(1-(a,b))(b_0,x_{\half}y_{\half}).$

\item 
$(a, (x_{\half}b_0)(y_{\half}b_0))=\frac 14(1-(a,b))(b_0, x_{\half}y_{\half}).$

\item 
$(b_0,(x_{\half}b_0)(y_{\half}b_0))=\frac 14(1-(a,b))^2(b_0, x_{\half}y_{\half}).$

\item 
$(b, (x_{\half}b_0)y_{\half})=\half(b_0, x_{\half}y_{\half}).$

\item 
$(b,(x_{\half}b_0)(y_{\half}b_0))=\frac 14(1-(a,b))(b_0,x_{\half}y_{\half}). $

\item 
$(b_{\half}(x_{\half}b_0))y_{\half}=\frac 18(1-(a,b)(b,x_{\half})y_{\half}+\frac 14(b,x_{\half})y_{\half}b_0.$

\item 
$(b_{\half}(x_{\half}b_0))(y_{\half}b_0)=\frac 14(1-(a,b)(b,x_{\half})y_{\half}b_0.$
 \end{enumerate}   
\end{lemma}
\begin{proof}
(1) By Theorem \ref{SI}(1),
$(b_{\half}(x_{\half}y_{\half}))b_0=\Big(\Big[(a,x_{\half}y_{\half})a+\Big(x_{\half}y_{\half}\Big)_0\Big]b_{\half}\Big)b_0\\
\textstyle{=\frac 14(1-(a,b))(a,x_{\half}y_{\half})b_{\half}+\frac 14(b_0,x_{\half}y_{\half})b_{\half}.}$
\medskip

\noindent
(2) Replacing $y_{\half}$ with $y_{\half}b_0$ in (1), and using Theorem \ref{SI}(1), we see that
$\textstyle{(b_{\half}(x_{\half}(y_{\half}b_0))b_0=\frac 14(1-(a,b))(a,x_{\half}(y_{\half}b_0))b_{\half}+\frac 14(b_0,x_{\half}(y_{\half}b_0))b_{\half} }\\
=\textstyle{\frac 14(1-(a,b))(b_0,x_{\half}y_{\half})b_{\half}.}$
\medskip

\noindent
(3) is clear and (4) follows from Theorem \ref{SI}(1).  (5) follows from (3) replacing $y_{\half}$ with $y_{\half}b_0.$  Similarly (6) follows from (4).  For (7) we have $(b, (x_{\half}b_0)y_{\half})=\half(a,b)(b_0,x_{\half}y_{\half})+\half(1-(a,b))(b_0,x_{\half}y_{\half})=\half(b_0, x_{\half}y_{\half}),$ and (8) follow from (7).
\medskip

\noindent
(9)\&(10) $(b_{\half}(x_{\half}b_0)))y_{\half}=\Big[\frac 14(1-(a,b)(b,x_{\half})a+\frac 14(b,x_{\half})b_0\Big]y_{\half}=\frac 18(1-(a,b)(b,x_{\half})y_{\half}+\frac 14(b,x_{\half})y_{\half}b_0.$  This shows (9), and (10) is obtained by replacing in (9) $y_{\half}$ with $y_{\half}b_0.$
\end{proof}

\begin{lemma}\label{P1212}
For all $x_{\half}, y_{\half}\in A_{\half}(a),$
\begin{enumerate}
\item
\begin{gather*}
0=\\
(a,b)((a,b)-1)x_{\half}y_{\half}+(2(a,b)-1)x_{\half}(y_{\half}b_0)+(2(a,b)-1)y_{\half}(x_{\half}b_0)\\
+4(x_{\half}b_0)(y_{\half}b_0)+4(x_{\half}b_{\half})(y_{\half}b_{\half})\\
-(b,y_{\half})x_{\half}b_{\half}-(b,x_{\half})y_{\half}b_{\half}\\
-(b,x_{\half}y_{\half})(a,b)a-(b,x_{\half}y_{\half})b_0+(a,b)a(x_{\half}y_{\half})+(x_{\half}y_{\half})b_0\\\iff
\\
(2(a,b)-1)\Big(x_{\half}(y_{\half}b_0)\Big)_0+(2(a,b)-1)\Big(y_{\half}(x_{\half}b_0)\Big)_0\\
-(b,y_{\half})\Big(x_{\half}b_{\half}\Big)_0-(b,x_{\half})\Big(y_{\half}b_{\half}\Big)_0
+(x_{\half}y_{\half})b_0-(b,x_{\half}y_{\half})b_0\qquad =\\
(a,b)(1-(a,b))\Big(x_{\half}y_{\half}\Big)_0-4\Big((x_{\half}b_{\half})(y_{\half}b_{\half})\Big)_0-4\Big((x_{\half}b_0)(y_{\half}b_0)\Big)_0.
\end{gather*}

\item 
\begin{gather*}
0=\\
+(2(a,b)-1)\Big[(b_{\half}x_{\half} )y_{\half} +(b_{\half}y_{\half} )x_{\half}  \Big]
 +4(b_{\half}x_{\half} )(y_{\half}b_0) +4(b_{\half}y_{\half})(x_{\half}b_0)  \\
\textstyle{-\half(a,b)\Big[(b,y_{\half})x_{\half}+(b,x_{\half})y_{\half}\Big]-(b,y_{\half})x_{\half}b_0-(b,x_{\half})y_{\half}b_0}\\
 -(b,x_{\half}y_{\half})b_{\half}+(x_{\half}y_{\half})b_{\half}\\\iff
 \\
(2(a,b)-1)\Big[(b_{\half}x_{\half} )y_{\half} +(b_{\half}y_{\half} )x_{\half}  \Big]
 +4(b_{\half}x_{\half} )(y_{\half}b_0) +4(b_{\half}y_{\half})(x_{\half}b_0)\quad =  \\
\textstyle{\half(a,b)\Big[(b,y_{\half})x_{\half}+(b,x_{\half})y_{\half}\Big]+(b,y_{\half})x_{\half}b_0+(b,x_{\half})y_{\half}b_0}\\
 +(b,x_{\half}y_{\half})b_{\half}-(x_{\half}y_{\half})b_{\half}.
\end{gather*}

\item
\begin{gather*}
-(a,b)\Big[(b_{\half}x_{\half} )y_{\half} +(b_{\half}y_{\half} )x_{\half}  \Big]\\
\textstyle{+\half(a,b)\Big[(b,y_{\half})x_{\half}+(b,x_{\half})y_{\half}\Big]+(b,y_{\half})x_{\half}b_0+(b,x_{\half})y_{\half}b_0}\\
 +(b,x_{\half}y_{\half})b_{\half}\\=
\\
+2b_{\half}(x_{\half}(y_{\half}b_0))+2b_{\half}(y_{\half}(x_{\half}b_0))\\
+2((b_{\half}y_{\half})x_{\half})b_0+2((b_{\half}x_{\half})y_{\half})b_0\\
 +(2(a,b)-1)b_{\half}(x_{\half}y_{\half})
\end{gather*}

\item
\begin{equation}\label{eqP1212}
\begin{aligned}
&\Big[(b,y_{\half})x_{\half}b_0+(b,x_{\half})y_{\half}b_0\Big]-
2\Big[((b_{\half}y_{\half})x_{\half})b_0+((b_{\half}x_{\half})y_{\half})b_0\Big]\\
&\textstyle{=\half(b_0,x_{\half}y_{\half})b_{\half}-\half(1-(a,b))(a,x_{\half}y_{\half})b_{\half}}\\
&\textstyle{=\half(b,x_{\half}y_{\half})b_{\half}-\half(a,x_{\half}y_{\half})b_{\half}.}
 \end{aligned}
 \end{equation}

 \item
 $((b_{\half}y_{\half})(x_{\half}b_0))b_0=\frac 14(1-(a,b))(b,y_{\half})x_{\half}b_0.$ Hence,\\
 $\Big[\Big(b_{\half}y_{\half}\Big)_0(x_{\half}b_0)\Big]b_0=\frac 14(1-(a,b))(b,y_{\half})x_{\half}b_0-\frac 14(b,y_{\half})(x_{\half}b_0)b_0=\frac 18(1-(a,b))(b,y_{\half})x_{\half}b_0$

 \item 
\begin{gather*}
(1-2(a,b))b_{\half}(x_{\half}y_{\half})-(a,b)\Big[(b_{\half}x_{\half} )y_{\half} +(b_{\half}y_{\half} )x_{\half}  \Big]\\
\textstyle{+\half(a,b)\Big[(b,y_{\half})x_{\half}+(b,x_{\half})y_{\half}\Big]}\\
+\textstyle{\half(b_0,x_{\half}y_{\half})b_{\half}-\half(1-(a,b))(a,x_{\half}y_{\half})b_{\half}
 +(b,x_{\half}y_{\half})b_{\half}}\\=
\\
2b_{\half}(x_{\half}(y_{\half}b_0))+2b_{\half}(y_{\half} (x_{\half}b_0)).
\end{gather*}

\item 
\begin{enumerate}
\item
$b_{\half}((x_{\half}b_0)(y_{\half}b_0))=\frac 14(1-(a,b))(b_0,x_{\half}y_{\half})b_{\half}.$

\item 
$((x_{\half}b_0)(y_{\half}b_0))b_0=\frac 14(1-(a,b))(b_0,x_{\half}y_{\half})b_0$
\end{enumerate}

\item 
If $(a,b)\ne 0,$ then
\begin{gather*}
b_{\half}(y_{\half}(x_{\half}b_0) )+(b_{\half}y_{\half} )(x_{\half}b_0)\\
\textstyle{=\half(b,y_{\half})x_{\half}b_0 -\frac 14(b,x_{\half})y_{\half}b_0+\frac 18(1-(a,b)(b,x_{\half})y_{\half}} \\
   \textstyle{+\half(b_0, x_{\half}y_{\half})b_{\half}.} 
\end{gather*}

\end{enumerate}
\end{lemma}
\begin{proof}
(1)\&(2)  By Lemma \ref{P1},
\begin{gather*}
0=\\
4(a,b)^2(ax_{\half})(ay_{\half})+4(a,b)(ax_{\half})(y_{\half}b_0)+4(a,b)(ay_{\half})(x_{\half}b_0)+4(a,b)(ax_{\half})(y_{\half}b_{\half})\\
+4(a,b)(ay_{\half})(x_{\half}b_{\half})
+4(x_{\half}b_0)(y_{\half}b_0)+4(x_{\half}b_{\half})(y_{\half}b_{\half})+4(x_{\half}b_0)(y_{\half}b_{\half})+4(y_{\half}b_0)(x_{\half}b_{\half})\\
-(a,b)(ax_{\half})y_{\half}-(x_{\half}b_0)y_{\half}-(x_{\half}b_{\half})y_{\half}-(a,b)(ay_{\half})x_{\half}-(y_{\half}b_0)x_{\half}-(y_{\half}b_{\half})x_{\half}\\
-(b,y_{\half})(a,b)ax_{\half}-(b,y_{\half})x_{\half}b_0-(b,y_{\half})x_{\half}b_{\half}-(b,x_{\half})(a,b)ay_{\half}-(b,x_{\half})y_{\half}b_0-(b,x_{\half})y_{\half}b_{\half}\\
-(b,x_{\half}y_{\half})(a,b)a-(b,x_{\half}y_{\half})b_0-(b,x_{\half}y_{\half})b_{\half}+(a,b)a(x_{\half}y_{\half})+(x_{\half}y_{\half})b_0+(x_{\half}y_{\half})b_{\half}\\=
\end{gather*}
\begin{gather*}
(a,b)^2x_{\half}y_{\half}+2(a,b)x_{\half}(y_{\half}b_0)+2(a,b)y_{\half}(x_{\half}b_0)+2(a,b)x_{\half}(y_{\half}b_{\half})+2(a,b)y_{\half}(x_{\half}b_{\half})\\
+4(x_{\half}b_0)(y_{\half}b_0)+4(x_{\half}b_{\half})(y_{\half}b_{\half})+4(x_{\half}b_0)(y_{\half}b_{\half})+4(y_{\half}b_0)(x_{\half}b_{\half})\\
\textstyle{-\half(a,b)x_{\half}y_{\half}-(x_{\half}b_0)y_{\half}-(x_{\half}b_{\half})y_{\half}-\half(a,b)y_{\half}x_{\half}-(y_{\half}b_0)x_{\half}-(y_{\half}b_{\half})x_{\half}}\\
\textstyle{-\half(a,b)(b,y_{\half})x_{\half}-(b,y_{\half})x_{\half}b_0-(b,y_{\half})x_{\half}b_{\half}-\half(a,b)(b,x_{\half})y_{\half}-(b,x_{\half})y_{\half}b_0-(b,x_{\half})y_{\half}b_{\half}}\\
-(b,x_{\half}y_{\half})(a,b)a-(b,x_{\half}y_{\half})b_0-(b,x_{\half}y_{\half})b_{\half}+(a,b)a(x_{\half}y_{\half})+(x_{\half}y_{\half})b_0+(x_{\half}y_{\half})b_{\half}\\=
\end{gather*}
\begin{gather*}
(a,b)((a,b)-1)x_{\half}y_{\half}+(2(a,b)-1)x_{\half}(y_{\half}b_0)+(2(a,b)-1)y_{\half}(x_{\half}b_0)\\
+(2(a,b)-1)x_{\half}(y_{\half}b_{\half})+(2(a,b)-1)y_{\half}(x_{\half}b_{\half})\\
+4(x_{\half}b_0)(y_{\half}b_0)+4(x_{\half}b_{\half})(y_{\half}b_{\half})+4(x_{\half}b_0)(y_{\half}b_{\half})+4(y_{\half}b_0)(x_{\half}b_{\half})\\
\textstyle{-\half(a,b)(b,y_{\half})x_{\half}-(b,y_{\half})x_{\half}b_0-(b,y_{\half})x_{\half}b_{\half}-\half(a,b)(b,x_{\half})y_{\half}-(b,x_{\half})y_{\half}b_0-(b,x_{\half})y_{\half}b_{\half}}\\
-(b,x_{\half}y_{\half})(a,b)a-(b,x_{\half}y_{\half})b_0-(b,x_{\half}y_{\half})b_{\half}+(a,b)a(x_{\half}y_{\half})+(x_{\half}y_{\half})b_0+(x_{\half}y_{\half})b_{\half}.
\end{gather*}
Thus the $\{0,1\}$ component is $0,$ which is (1), and the $\half$-component is $0,$ which is (2).\qedhere 
\medskip

\noindent
(3)
To prove (3) we use  Lemma \ref{1212}(3)(b) and part (2) to get:
\begin{gather*}
-(a,b)\Big[(b_{\half}x_{\half} )y_{\half} +(b_{\half}y_{\half} )x_{\half}  \Big]\\
\textstyle{+\half(a,b)\Big[(b,y_{\half})x_{\half}+(b,x_{\half})y_{\half}\Big]+(b,y_{\half})x_{\half}b_0+(b,x_{\half})y_{\half}b_0}\\
 +(b,x_{\half}y_{\half})b_{\half}-b_{\half}(x_{\half}y_{\half})\\=
\\
+2b_{\half}(x_{\half}(y_{\half}b_0))+2b_{\half}(y_{\half}(x_{\half}b_0))\\
+2((b_{\half}y_{\half})x_{\half})b_0+2((b_{\half}x_{\half})y_{\half})b_0\\
 +2((a,b)-1)b_{\half}(x_{\half}y_{\half}),
\end{gather*}
and (3) follows.
\medskip

\noindent
(4) Multiplying (3) by $b_0,$ using Theorem \ref{SI}(1), we get
 \begin{gather*}
-(a,b)\Big[((b_{\half}x_{\half} )y_{\half})b_0 +((b_{\half}y_{\half} )x_{\half})b_0  \Big]\\
\textstyle{\half(a,b)\Big[(b,y_{\half})x_{\half}b_0+(b,x_{\half})y_{\half}b_0\Big]}\\
\textstyle{+\half(1-(a,b))\Big[(b,y_{\half})x_{\half}b_0+(b,x_{\half})y_{\half}b_0\Big]}\\
 \textstyle{+\half(1-(a,b))(b,x_{\half}y_{\half})b_{\half}}\\=
\\
+2(b_{\half}(x_{\half}(y_{\half}b_0)))b_0+2(b_{\half}(y_{\half}(x_{\half}b_0)))b_0
+(1-(a,b))\Big[((b_{\half}y_{\half})x_{\half})b_0+((b_{\half}x_{\half})y_{\half})b_0\Big]\\
 +(2(a,b)-1)(b_{\half}(x_{\half}y_{\half}))b_0\\\iff
 \\
\textstyle{\half\Big[(b,y_{\half})x_{\half}b_0+(b,x_{\half})y_{\half}b_0\Big]}\\
 \textstyle{+\half(1-(a,b))(b,x_{\half}y_{\half})b_{\half}}\\=
\\
+2(b_{\half}(x_{\half}(y_{\half}b_0)))b_0+2(b_{\half}(y_{\half}(x_{\half}b_0)))b_0\\
\Big[((b_{\half}y_{\half})x_{\half})b_0+((b_{\half}x_{\half})y_{\half})b_0\Big]\\
 +(2(a,b)-1)(b_{\half}(x_{\half}y_{\half}))b_0.
\end{gather*}
Using Lemma \ref{SSI}(1\&2) we get
\begin{gather*}
\textstyle{\half\Big[(b,y_{\half})x_{\half}b_0+(b,x_{\half})y_{\half}b_0\Big]}\\
 \textstyle{+\half(1-(a,b))(a,b)(a,x_{\half}y_{\half})b_{\half}+\half(1-(a,b))(b_0,x_{\half}y_{\half})b_{\half}}\\=
\\
(1-(a,b))(b_0,x_{\half}y_{\half})b_{\half}\\
+\Big[((b_{\half}y_{\half})x_{\half})b_0+((b_{\half}x_{\half})y_{\half})b_0\Big]\\
\textstyle{(2(a,b)-1)\Big[\frac 14(1-(a,b))(a,x_{\half}y_{\half})b_{\half}+\frac 14(b_0,x_{\half}y_{\half})b_{\half}\Big]}\\\iff
\\
\textstyle{\half\Big[(b,y_{\half})x_{\half}b_0+(b,x_{\half})y_{\half}b_0\Big]-\Big[((b_{\half}y_{\half})x_{\half})b_0+((b_{\half}x_{\half})y_{\half})b_0\Big]}\\=
\\
\textstyle{-\frac 14(1-(a,b))(a,x_{\half}y_{\half})b_{\half}+\frac 14(b_0,x_{\half}y_{\half})b_{\half}.}
\end{gather*}
\medskip

\noindent
(5)
 Putting $x_{\half}b_0$ in place of $x_{\half}$ in equation \eqref{eqP1212}  we get (using also Lemma \ref{SSI}(9)),
 \begin{gather*}
\Big[(b,y_{\half})(x_{\half}b_0)b_0+(b,x_{\half}b_0)y_{\half}b_0\Big]-
2\Big[((b_{\half}y_{\half})(x_{\half}b_0))b_0+((b_{\half}(x_{\half}b_0))y_{\half})b_0\Big]\\
=\textstyle{\half(b_0,(x_{\half}b_0)y_{\half})b_{\half}-\half(1-(a,b))(a,(x_{\half}b_0)y_{\half})b_{\half}}\\\iff
\\
\textstyle{\half(1-(a,b))\Big[(b,y_{\half})x_{\half}b_0+(b,x_{\half})y_{\half}b_0\Big]-
2\Big[((b_{\half}y_{\half})(x_{\half}b_0))b_0+\frac 14 (1-(a,b))(b,x_{\half})y_{\half}b_0\Big]}\\
=\textstyle{\frac 14(1-(a,b))(b_0,x_{\half}y_{\half})b_{\half}-\frac 14(1-(a,b))(b_0,x_{\half}y_{\half})b_{\half}}\\\iff
\\
\textstyle{((b_{\half}y_{\half})(x_{\half}b_0))b_0=\frac 14(1-(a,b))(b,y_{\half})x_{\half}b_0.}
 \end{gather*}
 \medskip

 \noindent
 (6)  This follows from (3) and (4).
 \medskip

 \noindent
 (7) 
(a) Replacing in (6), $x_{\half}, y_{\half}$ with $x_{\half}b_0, y_{\half}b_0,$ using Lemma \ref{SSI} (5), (6), (8) and (10), we get
\begin{gather*}
\textstyle{(1-2(a,b))b_{\half}((x_{\half}b_0)(y_{\half}b_0))-\frac 14(1-(a,b))(a,b)\Big[(b,x_{\half})y_{\half}b_0 +(b,y_{\half})x_{\half}b_0  \Big]}\\
\textstyle{+\frac 14(1-(a,b))(a,b)\Big[(b,y_{\half})x_{\half}b_0+(b,x_{\half})y_{\half}b_0\Big]}\\
\textstyle{+\frac 18(1-(a,b)^2(b_0,x_{\half}y_{\half})b_{\half}-\frac 18(1-(a,b))^2(b_0,x_{\half}y_{\half})b_{\half}
 +\frac 14(1-(a,b))(b_0,x_{\half}y_{\half})b_{\half}}\\=
\\
2(1-(a,b))
b_{\half}((x_{\half}b_0)(y_{\half}b_0)).
\end{gather*}
\medskip

\noindent
(b)
Replacing $x_{\half}, y_{\half}$ by $x_{\half}b_0, y_{\half}b_0$ in (1), and using Theorem \ref{SI}((1)\&(5)) and Lemma \ref{SSI}(8), we get,
\begin{gather*}
(2(a,b)-1)(1-(a,b))\Big((x_{\half}b_0)(y_{\half}b_0)\Big)_0\\
-(b,(y_{\half}b_0))\Big( b_{\half}(x_{\half}b_0)\Big)_0-(b,(x_{\half}b_0))\Big( b_{\half}(y_{\half}b_0)\Big)_0\\
+((x_{\half}b_0)(y_{\half}b_0))b_0-(b,(x_{\half}b_0)(y_{\half}b_0))b_0\qquad =\\
(a,b)(1-(a,b))\Big((x_{\half}b_0)(y_{\half}b_0)\Big)_0-4\Big((b_{\half}(x_{\half}b_0))( b_{\half}(y_{\half}b_0))\Big)_0\\
-(1-(a,b))^2\Big((x_{\half}b_0)(y_{\half}b_0)\Big)_0\\\iff
\\
\textstyle{((x_{\half}b_0)(y_{\half}b_0))b_0-\frac 14(b,x_{\half})(b,y_{\half})b_0-\frac 14(1-(a,b))(b_0,x_{\half}y_{\half})=}\\
\textstyle{-\frac 14(b,x_{\half})(b,y_{\half})b_0.}
\end{gather*}
 \medskip

 \noindent
 (8) Replacing in (6) $x_{\half}$ with $x_{\half}b_0,$ using Lemma \ref{SSI} and part (7) we get
\begin{gather*}
\textstyle{(1-2(a,b))b_{\half}((x_{\half}b_0)y_{\half})-\frac 18(1-(a,b)(a,b)(b,x_{\half})y_{\half}-\frac 14(a,b)(b,x_{\half})y_{\half}b_0}\\
\textstyle{-(a,b)(b_{\half}y_{\half} )(x_{\half}b_0)
+\half(a,b)(b,y_{\half})x_{\half}b_0+\frac 14(1-(a,b))(a,b)(b,x_{\half})y_{\half}}\\
\textstyle{+\frac 14(1-(a,b))(b_0,x_{\half}y_{\half})b_{\half}-\frac 14(1-(a,b))(b_0,x_{\half}y_{\half})b_{\half}
 +\half(b_0, x_{\half}y_{\half})b_{\half}}\\=
\\
\textstyle{\half(1-(a,b))(b_0,x_{\half}y_{\half})b_{\half}+(1-(a,b))b_{\half}(y_{\half} (x_{\half}b_0))}\\\iff
\\
\textstyle{-(a,b))b_{\half}((x_{\half}b_0)y_{\half})+\frac 18(1-(a,b)(a,b)(b,x_{\half})y_{\half}-\frac 14(a,b)(b,x_{\half})y_{\half}b_0}\\
\textstyle{-(a,b)(b_{\half}y_{\half} )(x_{\half}b_0) +\half(a,b)(b,y_{\half})x_{\half}b_0+\half(a,b)(b_0, x_{\half}y_{\half})b_{\half}}\\=0. 
\end{gather*}
\end{proof}

\section{The proof that $b_{\half}(x_0y_0)=(b_{\half}x_0)y_0+(b_{\half}y_0)x_0$}

In this section we prove.
\begin{thm}\label{0012}
If $(a,b)\ne \frac 14,$ then
$b_{\half}(x_0y_0)=(b_{\half}x_0)y_0+x_0(b_{\half}y_0).$
\end{thm}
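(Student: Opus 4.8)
The plan is to show that the symmetric element
$S := b_{\half}(x_0y_0)-(b_{\half}x_0)y_0-(b_{\half}y_0)x_0\in A_{\half}(a)$
vanishes; since $(b_{\half}y_0)x_0=x_0(b_{\half}y_0)$, this is exactly the asserted identity. The whole strategy is to manufacture a scalar relation of the shape $\big((a,b)-\tfrac14\big)S=0$ and then invoke the hypothesis $(a,b)\ne\tfrac14$. Throughout I abbreviate $T:=(b_{\half}x_0)(y_0b_0)+(b_{\half}y_0)(x_0b_0)$, $C:=(b,y_0)b_{\half}x_0+(b,x_0)b_{\half}y_0+(b,x_0y_0)b_{\half}$, and $U:=b_{\half}(x_0(y_0b_0))+b_{\half}(y_0(x_0b_0))$, all of which lie in $A_{\half}(a)$ and are symmetric in $x_0,y_0$.

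First I would assemble the relations already on hand. Lemma \ref{P00}(\ref{P002}) is precisely $S=-4T+C$, i.e.\ $4T=C-S$. Lemma \ref{P00}(\ref{P003}) gives $((b_{\half}x_0)y_0)b_0+((b_{\half}y_0)x_0)b_0=\tfrac14(b,x_0y_0)b_{\half}$. Now Lemma \ref{P}(\ref{10b}) is exactly the symmetric sum whose left side is $2b_{\half}(x_0y_0)+4T$ and whose right side, after recognizing the two ``$b_0$-capped'' terms via Lemma \ref{P00}(\ref{P003}) and rewriting $(b_{\half}x_0)y_0+(b_{\half}y_0)x_0=b_{\half}(x_0y_0)-S$, becomes $(1+(a,b))\big(b_{\half}(x_0y_0)-S\big)+2U+\tfrac12(b,x_0y_0)b_{\half}$. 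Eliminating $T$ through $4T=C-S$ collapses this to the first key relation
\[
2U=(1-(a,b))b_{\half}(x_0y_0)+(a,b)S+C-\tfrac12(b,x_0y_0)b_{\half}. \tag{D}
\]

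The crux is to compute $U$ a second, independent way. Here I would apply Lemma \ref{P00}(\ref{P002}) itself with the \emph{decorated} arguments $(x_0,\,y_0b_0)$ and $(y_0,\,x_0b_0)$, which is legitimate since $y_0b_0,\,x_0b_0\in A_0(a)$. Each application rewrites a summand of $U$ as $(b_{\half}x_0)(y_0b_0)$ plus a term $(b_{\half}(y_0b_0))x_0$ plus a remainder $R(x_0,y_0b_0)$. Every product that then appears is of an already-evaluated type: $b_{\half}(z_0b_0)$ and $(b_{\half}z_0)b_0$ come from Lemma \ref{Pa0}(1),(4); $(z_0b_0)b_0$ comes from Theorem \ref{SI}(2); and the Frobenius scalars $(b,z_0b_0)$ and $(b,x_0(y_0b_0))$ are handled by the Frobenius law together with $b_0^2=(1-(a,b))b_0$ from Lemma \ref{Q}(3). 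The point to verify is the clean collapse $R(x_0,y_0b_0)=R(y_0,x_0b_0)=-2(1-(a,b))T+\tfrac12(1-(a,b))C$ (all the $(b,x_0)(b,y_0)b_{\half}$ contributions cancel). Reassembling, together with $(b_{\half}(y_0b_0))x_0+(b_{\half}(x_0b_0))y_0=\half(1-(a,b))(b_{\half}(x_0y_0)-S)+\tfrac14\big((b,y_0)b_{\half}x_0+(b,x_0)b_{\half}y_0\big)$, gives the second key relation $U=(4(a,b)-3)T+\tfrac12(1-(a,b))b_{\half}(x_0y_0)-\tfrac12(1-(a,b))S+(\tfrac54-(a,b))C-\tfrac14(b,x_0y_0)b_{\half}$.

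Finally I would equate (D) with twice this second expression. The terms $b_{\half}(x_0y_0)$ and $(b,x_0y_0)b_{\half}$ drop out, and after substituting $4T=C-S$ once more the coefficient of $C$ vanishes identically, while the coefficient of $S$ is exactly $(a,b)-\tfrac14$; that is, $\big((a,b)-\tfrac14\big)S=0$. Since $(a,b)\ne\tfrac14$ we conclude $S=0$, which is the theorem. The main obstacle is entirely the bookkeeping in the second evaluation of $U$: there are many fourfold products to reduce, and the argument lives or dies on two exact cancellations — the collapse of $R$ to a combination of $T$ and $C$, and the vanishing of the final $C$-coefficient — so the computation must be carried out with care rather than estimated.
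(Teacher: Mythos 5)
Your argument is correct, and I verified the two cancellations on which it hinges: in $R(x_0,y_0b_0)$ the $(b,x_0)(b,y_0)b_{\half}$ contributions do cancel ($-\tfrac14-\tfrac12+\tfrac14+\tfrac12=0$), so $R(x_0,y_0b_0)=-2(1-(a,b))T+\tfrac12(1-(a,b))C$ as you claim; and after substituting $4T=C-S$ into the comparison of your relation (D) with the second evaluation of $U$, the coefficient of $C$ vanishes identically while that of $S$ is $2\big((a,b)-\tfrac14\big)$ --- twice what you state, which is immaterial since $\charc(\ff)\ne 2$. The route is genuinely different from the paper's. The paper obtains its key relation \eqref{eq0012a} by substituting $x_{\half}=b_{\half}x_0$ into the mixed identity of Lemma \ref{P120}(1), i.e.\ it leaves the pure $A_0(a)$ setting and exploits the $(\half,0)$ form of the fundamental identity, and only then symmetrizes and quotes Lemma \ref{P00}(\ref{P002}) and Lemma \ref{P00}(\ref{P003}) exactly as you do at the end. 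You instead stay entirely inside the $(0,0)$ computations: relation (D) comes from Lemma \ref{P}(\ref{10b}) combined with Lemma \ref{P00}(\ref{P002})--(\ref{P003}), and the second evaluation of $U$ comes from feeding the decorated argument $y_0b_0$ back into Lemma \ref{P00}(\ref{P002}) itself, reduced via Theorem \ref{SI}, Lemma \ref{Pa0} and Lemma \ref{Q}(3). What your approach buys is homogeneity --- every ingredient is a consequence of the $A_0(a)$-identities and the already-tabulated binary products --- at the price of a longer reduction of $U$; the paper's substitution $x_{\half}=b_{\half}x_0$ is shorter but imports the mixed-type Lemma \ref{P120}. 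Both proofs ultimately run on the same mechanism: playing two evaluations of a symmetric quantity against each other until the factor $(a,b)-\tfrac14$ appears in front of $S$.
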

\begin{proof}
We first show the following. 
\begin{equation}\label{eq0012a}
\begin{aligned}
&2(a,b)\Big((b_{\half}x_0)y_0+(b_{\half}y_0)x_0-b_{\half}(x_0y_0)\Big)\\
&+(b,x_0)b_{\half}y_0+4((b_{\half}x_0)y_0)b_0-4(b_{\half}x_0)(y_0b_0)=0.
\end{aligned}
\end{equation}

\noindent
{\bf Proof of \eqref{eq0012a}:} We replace $x_{\half}$ with $b_{\half}x_0$ in Lemma \ref{P120}(1), using $(b_{\half}x_0)b_0=\frac 14(b,x_0)b_{\half}$ and $2b_{\half}(b_{\half}x_0)=(a,b)(b,x_0)a+(a,b)x_0b_0,$ and $b_{\half}(y_0b_0)=\half(1-(a,b))b_{\half}y_0+\frac 14(b,y_0)b_{\half}$ (Theorem \ref{SI}). We get
\medskip

 \begin{gather*}
0=\\
(2(a,b)-1)(b_{\half}x_0)(y_0b_0)
+4((b_{\half}x_0)b_0)(y_0b_0)+4((b_{\half}x_0)b_{\half})( b_{\half}y_0)
-((b_{\half}x_0)b_0)y_0\\
\textstyle{-\half(b,y_0)(a,b)b_{\half}x_0-(b,y_0)(b_{\half}x_0)b_0-(b,b_{\half}x_0)b_{\half}y_0 
-(b,(b_{\half}x_0)y_0)b_{\half}+((b_{\half}x_0)y_0)b_0}\\=
\\
(2(a,b)-1)(b_{\half}x_0)(y_0b_0)
+(b,x_0)b_{\half}(y_0b_0)+(a,b)(b,x_0) b_{\half}y_0+2(a,b)(b_{\half}y_0)(x_0b_0)\\
\textstyle{-\frac 14 (b,x_0)b_{\half}y_0
-\half(b,y_0)(a,b)b_{\half}x_0-\frac 14 (b,x_0)(b,y_0)b_{\half}-(a,b)(b,x_0)b_{\half}y_0 
-(b,(b_{\half}x_0)y_0)b_{\half}}\\
+((b_{\half}x_0)y_0)b_0
\end{gather*}
\begin{gather*}
=\\
\textstyle{(2(a,b)-1)(b_{\half}x_0)(y_0b_0)
+(b,x_0)\Big(\half (1-(a,b))b_{\half}y_0+\frac 14 (b,y_0)b_{\half}\Big)+2(a,b)(b_{\half}y_0)(x_0b_0)}\\
\textstyle{-\frac 14 (b,x_0)b_{\half}y_0
-\half(b,y_0)(a,b)b_{\half}x_0-\frac 14 (b,x_0)(b,y_0)b_{\half}
-(b,(b_{\half}x_0)y_0)b_{\half}}\\
+((b_{\half}x_0)y_0)b_0\\\iff
\\
\textstyle{2(a,b)(b_{\half}x_0)(y_0b_0)+2(a,b)(b_{\half}y_0)(x_0b_0)-\half (a,b)(b,y_0)b_{\half}x_0-\half (a,b) (b,x_0)b_{\half}y_0}\\
\textstyle{-\half (a,b)(x_0b_0,y_0)b_{\half}}\\
\textstyle{+\frac 14 (b,x_0)b_{\half}y_0+((b_{\half}x_0)y_0)b_0-(b_{\half}x_0)(y_0b_0)=0}\\\iff'
\\ 
(a,b)\Big(4(b_{\half}x_0)(y_0b_0)+4(b_{\half}y_0)(x_0b_0)-(b,y_0)b_{\half}x_0-(b,x_0)b_{\half}y_0-(b,x_0y_0)b_{\half}\Big)\\
\textstyle{+\frac 12 (b,x_0)b_{\half}y_0+2((b_{\half}x_0)y_0)b_0-2(b_{\half}x_0)(y_0b_0)=0.}
\end{gather*}
Recall now that by Lemma \ref{P00}(\ref{P002}),
\begin{gather*}
4(b_{\half}y_0 )(x_0b_0)+4(b_{\half}x_0 )(y_0b_0)
-(b,y_0)b_{\half}x_0-(b,x_0)b_{\half}y_0
-(b,x_0y_0)b_{\half}\\
= (b_{\half}x_0 )y_0+(b_{\half}y_0 )x_0-b_{\half}(x_0y_0). 
\end{gather*}
Plugging this into the last identity gives \eqref{eq0012a}.\qedhere

We now complete the proof of the theorem.  Interchanging $x_0, y_0$ in \eqref{eq0012a} and adding, we get

\begin{gather*}
0=\\
4(a,b)\Big((b_{\half}x_0)y_0+(b_{\half}y_0)x_0-b_{\half}(x_0y_0)\Big)\\
+(b,x_0)b_{\half}y_0+4((b_{\half}x_0)y_0)b_0-4(b_{\half}x_0)(y_0b_0)\\
+(b,y_0)b_{\half}x_0+4((b_{\half}y_0)x_0)b_0-4(b_{\half}y_0)(x_0b_0)\\=
\\
4(a,b)\Big((b_{\half}x_0)y_0+(b_{\half}y_0)x_0-b_{\half}(x_0y_0)\Big)\\
+4((b_{\half}x_0)y_0)b_0+4((b_{\half}y_0)x_0)b_0\\
-4(b_{\half}x_0)(y_0b_0)-4(b_{\half}y_0)(x_0b_0)+(b,y_0)b_{\half}x_0+(b,x_0)b_{\half}y_0
\end{gather*}
\begin{gather*}
=\\
\\\text{ (using Lemma \ref{P00}(\ref{P003}), and Lemma \ref{P00}(2))}\\
4(a,b)\Big((b_{\half}x_0)y_0+(b_{\half}y_0)x_0-b_{\half}(x_0y_0)\Big)\\
+(b,x_0y_0)b_{\half}\\
+\Big(b_{\half}x_0)y_0-(b_{\half}y_0)x_0-b_{\half}(x_0y_0\Big)-(b,x_0y_0)b_{\half},
\end{gather*}
which completes the proof of the theorem.
\end{proof}

\section{The proof of the identity\\ $
b_{\half}(x_{\half}y_{\half})+(b_{\half}x_{\half} )y_{\half} +(b_{\half}y_{\half} )x_{\half}=
\half(b,y_{\half} )x_{\half} +\half(b,x_{\half} )y_{\half}+(a,x_{\half}y_{\half})b_{\half}
$}

In this section we prove,
\begin{thm}\label{121212}
If $(a,b)\notin\{1,\frac 14\},$ then\\
$
b_{\half}(x_{\half}y_{\half})+(b_{\half}x_{\half} )y_{\half} +(b_{\half}y_{\half} )x_{\half}=\\
\half(b,y_{\half} )x_{\half} +\half(b,x_{\half} )y_{\half}+(a,x_{\half}y_{\half})b_{\half}.
$
\end{thm}

We start with,

\begin{lemma}\label{lem121212}$ $
 \begin{enumerate}
 \item 
If $(a,b)\ne\frac 14,$ then
\begin{gather*}
(1-(a,b))(y_{\half}x_0)b_0-2((y_{\half}b_0)x_0)b_0\\
=(1-(a,b)) y_{\half}(x_0b_0)
-2(y_{\half}b_0)(b_0x_0).
\end{gather*}

\item 
If $(a,b)\notin\{1,\frac 14\},$ then
\begin{gather*}
((b_{\half}x_{\half})y_{\half})b_0+(y_{\half}b_0)(b_{\half}x_{\half})\\
\textstyle{=\half(1-(a,b))(b_{\half}x_{\half})y_{\half}-\frac 18(1-(a,b))(b,x_{\half})y_{\half}+\frac 34(b,x_{\half})y_{\half}b_0.}
\end{gather*}

\item If $(a,b)\notin\{1,\frac 14\},$ then
\begin{gather*}
(y_{\half}b_0)(b_{\half}x_{\half})+(x_{\half}b_0)(b_{\half}y_{\half})\\
\textstyle{=\half(1-(a,b))(b_{\half}x_{\half})y_{\half}+\half(1-(a,b))(b_{\half}y_{\half})x_{\half}}\\
\textstyle{-\frac 18(1-(a,b))(b,x_{\half})y_{\half}-\frac 18(1-(a,b))(b,y_{\half})x_{\half}}\\
\textstyle{\frac 14\Big[(b,y_{\half})x_{\half}b_0+(b,x_{\half})y_{\half}b_0\Big]}\\
\textstyle{+\frac 14(b,x_{\half}y_{\half})b_{\half}-\frac 14(a,x_{\half}y_{\half})b_{\half}.}
\end{gather*}
\end{enumerate}
\end{lemma}
\begin{proof}
(1)  By Lemma \ref{012}(2) and Theorem \ref{0012}. using also Theorem \ref{SI}(4a), we get,
 \begin{gather*}
\textstyle{\half(a,b)(1-(a,b))y_{\half}x_0+(1-(a,b))(y_{\half}x_0)b_0
-(a,b)(y_{\half}b_0)x_0-2((y_{\half}b_0)x_0)b_0}\\
-2b_{\half}((b_{\half}y_{\half})x_0)\\
=(1-(a,b)) y_{\half}(x_0b_0)
-2(y_{\half}b_0)(b_0x_0)
-2(b_{\half}x_0)(b_{\half}y_{\half})\\\iff
\\
\textstyle{\half(a,b)(1-(a,b))y_{\half}x_0+(1-(a,b))(y_{\half}x_0)b_0
-(a,b)(y_{\half}b_0)x_0-2((y_{\half}b_0)x_0)b_0}\\
-2\Big[b_{\half}\Big(b_{\half}y_{\half}\Big)_0\Big]x_0-2(b_{\half}x_0)\Big(b_{\half}y_{\half}\Big)_0\\
=(1-(a,b)) y_{\half}(x_0b_0)
-2(y_{\half}b_0)(b_0x_0)
-2(b_{\half}x_0)(b_{\half}y_{\half})\\\iff
\\
\textstyle{\half(a,b)(1-(a,b))y_{\half}x_0+(1-(a,b))(y_{\half}x_0)b_0
-(a,b)(y_{\half}b_0)x_0-2((y_{\half}b_0)x_0)b_0}\\
\textstyle{-2\Big[\frac 14(a,b)(1-(a,b))y_{\half}-\half(a,b)y_{\half}b_0+\frac 14(b,y_{\half})b_{\half}\Big]x_0+\half(b,y_{\half})b_{\half}x_0}\\
=(1-(a,b)) y_{\half}(x_0b_0)
-2(y_{\half}b_0)(b_0x_0)\\\iff
\\
(1-(a,b))(y_{\half}x_0)b_0
-(a,b)(y_{\half}b_0)x_0-2((y_{\half}b_0)x_0)b_0\\
+(a,b)x_0(y_{\half}b_0)\\
=(1-(a,b)) y_{\half}(x_0b_0)
-2(y_{\half}b_0)(b_0x_0)\\\iff
\\
(1-(a,b))(y_{\half}x_0)b_0-2((y_{\half}b_0)x_0)b_0\\
=(1-(a,b)) y_{\half}(x_0b_0)
-2(y_{\half}b_0)(b_0x_0).
\end{gather*}
\medskip

\noindent
(2) Taking $x_0=\Big(b_{\half}x_{\half}\Big)_0$ in (1), and using $\Big(b_{\half}x_{\half}\Big)_0z=(b_{\half}x_{\half})z-\frac 14(b,x_{\half})z,$ for $z\in \{y_{\half}, y_{\half}b_0\},$
and also Lemma \ref{P1212}(5), we get
\begin{gather*}
(1-(a,b))\Big[\Big(b_{\half}x_{\half}\Big)_0y_{\half}\Big]b_0-2\Big[(y_{\half}b_0)\Big(b_{\half}x_{\half}\Big)_0\Big]b_0\\
=\textstyle{\Big[(1-(a,b)) y_{\half}-2y_{\half}b_0\Big]\Big[\half(1-(a,b))\Big(b_{\half}x_{\half}\Big)_0+\frac 14(b,x_{\half})b_0\Big]}
\\\iff
\\
\textstyle{(1-(a,b))\Big[(b_{\half}x_{\half})y_{\half}\Big]b_0-\frac 14(1-(a,b))(b,x_{\half})y_{\half}b_0-\frac 14(1-(a,b))(b,x_{\half})y_{\half}b_0}\\
\textstyle{=\half(1-(a,b))^2(b_{\half}x_{\half})y_{\half}-\frac 18(1-(a,b))^2(b,x_{\half})y_{\half}+\frac 14(1-(a,b))(b,x_{\half})y_{\half}b_0}\\
\textstyle{-(1-(a,b))(b_{\half}x_{\half})(y_{\half}b_0) +\frac 14(1-(a,b))(b,x_{\half})y_{\half}b_0}\\
\textstyle{-\frac 14(1-(a,b))(b,x_{\half})y_{\half}b_0}
\end{gather*}
\begin{gather*}
\\\iff
\\
((b_{\half}x_{\half})y_{\half})b_0+(y_{\half}b_0)(b_{\half}x_{\half})\\
\textstyle{=\half(1-(a,b))(b_{\half}x_{\half})y_{\half}-\frac 18(1-(a,b))(b,x_{\half})y_{\half}+\frac 34(b,x_{\half})y_{\half}b_0.}
\end{gather*}
\medskip

\noindent
(3)
Interchanging $x_{\half}, y_{\half}$ in (2) adding and using Equation \eqref{eqP1212}  for $(b_{\half}x_{\half})y_{\half})b_0+(b_{\half}y_{\half})x_{\half})b_0,$ we get

\begin{gather*}
\textstyle{\half\Big[(b,y_{\half})x_{\half}b_0+(b,x_{\half})y_{\half}b_0\Big]-\frac 14(b,x_{\half}y_{\half})b_{\half}+\frac 14(a,x_{\half}y_{\half})b_{\half}}\\
+(y_{\half}b_0)(b_{\half}x_{\half})+(x_{\half}b_0)(b_{\half}y_{\half})\\
\textstyle{=\half(1-(a,b))\Big[(b_{\half}x_{\half})y_{\half}+(b_{\half}y_{\half})x_{\half}\Big]}\\
\textstyle{-\frac 18(1-(a,b))\Big[(b,x_{\half})y_{\half}+(b,y_{\half})x_{\half}\Big]}\\
\textstyle{+\frac 34(b,x_{\half})y_{\half}b_0+\frac 34(b,y_{\half})x_{\half}b_0}\\\iff
\\
(y_{\half}b_0)(b_{\half}x_{\half})+(x_{\half}b_0)(b_{\half}y_{\half})\\
\textstyle{=\half(1-(a,b))(b_{\half}x_{\half})y_{\half}+\half(1-(a,b))(b_{\half}y_{\half})x_{\half}}\\
\textstyle{-\frac 18(1-(a,b))(b,x_{\half})y_{\half}-\frac 18(1-(a,b))(b,y_{\half})x_{\half}}\\
\textstyle{+\frac 14\Big[(b,y_{\half})x_{\half}b_0+(b,x_{\half})y_{\half}b_0\Big]}\\
\textstyle{+\frac 14(b,x_{\half}y_{\half})b_{\half}-\frac 14(a,x_{\half}y_{\half})b_{\half}.}\qedhere
\end{gather*}
\end{proof}
\begin{proof}[{\bf Proof of Theorem \ref{121212}} ]
We replace in Lemma \ref{P1212}(2), $4(b_{\half}x_{\half} )(y_{\half}b_0) +4(b_{\half}y_{\half})(x_{\half}b_0),$ by the formula of Lemma \ref{lem121212}(3) to get
 \begin{gather*}
 0=\\
(2(a,b)-1)\Big[(b_{\half}x_{\half} )y_{\half} +(b_{\half}y_{\half} )x_{\half}  \Big]\\
+2(1-(a,b))\Big[(b_{\half}x_{\half})y_{\half}+(b_{\half}y_{\half})x_{\half}\Big]\\
\textstyle{-\half(1-(a,b))\Big[(b,x_{\half})y_{\half}+(b,y_{\half})x_{\half}\Big]}\\
(b,y_{\half})x_{\half}b_0+(b,x_{\half})y_{\half}b_0\\
+(b,x_{\half}y_{\half})b_{\half}-(a,x_{\half}y_{\half})b_{\half}\\
\textstyle{-\half(a,b)\Big[(b,y_{\half})x_{\half}+(b,x_{\half})y_{\half}\Big]-(b,y_{\half})x_{\half}b_0-(b,x_{\half})y_{\half}b_0}\\
 -(b,x_{\half}y_{\half})b_{\half}+(x_{\half}y_{\half})b_{\half}
 \end{gather*}
 \begin{gather*}
 \\=
 \\
 b_{\half}(x_{\half}y_{\half}) +(b_{\half}x_{\half} )y_{\half} +(b_{\half}y_{\half} )x_{\half} \\
\textstyle{-\half\Big[b,x_{\half})y_{\half}+(b,y_{\half})x_{\half}\Big]-(a,x_{\half}y_{\half})b_{\half}.}\qedhere
 \end{gather*}
\end{proof}

\section{The proof of the identity\\ $\Big(b_{\half}(x_{\half}y_0)\Big)_0-(b_{\half}x_{\half})y_0+\Big((b_{\half}y_0)x_{\half}\Big)_0=0.$}
The purpose of this section is to prove:

\begin{thm}\label{thm120}
If $(a,b)\notin\{1,\frac 14\},$ then\\
$\Big(b_{\half}(x_{\half}y_0)\Big)_0-(b_{\half}x_{\half})y_0+\Big((b_{\half}y_0)x_{\half}\Big)_0=0.$
\end{thm}

\begin{prop}\label{importantprop1}
\begin{equation}\label{importanteq}
\begin{aligned}
&\Big(b_{\half}(x_{\half}y_0)\Big)_0-(b_{\half}x_{\half})y_0+\Big((b_{\half}y_0)x_{\half}\Big)_0=\\
&2\Big[b_{\half}(x_{\half}y_0)-(b_{\half}x_{\half})y_0+(b_{\half}y_0)x_{\half}\Big]b_0+ \\
&\textstyle{2\Big(b_{\half}(x_{\half}(y_0b_0))\Big)_0-2\Big(b_{\half}((x_{\half}b_0)y_0)\Big)_0  -\half(b,x_{\half}y_0)b_0.}    
\end{aligned}
\end{equation}
\end{prop}
\begin{proof}
Recall that by Lemma \ref{012}(3),
\begin{equation}\label{120a}
\begin{aligned}
&(1-(a,b))b_{\half}(x_{\half}y_0)-(1-(a,b))(b_{\half}y_0)x_{\half}+2(b_{\half}y_0)(x_{\half}b_0)\\
&+2(b_{\half}x_{\half})(y_0b_0 )-2((b_{\half}x_{\half})y_0)b_0
-2b_{\half}((x_{\half}b_0)y_0)\\
&= 0.
\end{aligned}
\end{equation}
By Lemma \ref{120}(3),
\begin{equation}\label{120b}
\begin{aligned}
&b_{\half}(x_{\half}y_0)+(a,b)(b_{\half}y_0)x_{\half}-(b_{\half}x_{\half})y_0+2(x_{\half}b_0)(b_{\half}y_0)\\
&+2(b_{\half}x_{\half})(y_0b_0)-2b_{\half}(x_{\half}(y_0b_0) )\\
&-2(a,b)a(x_{\half}(b_{\half}y_0))-2((b_{\half}y_0)x_{\half} )b_0\\
&= 0.
\end{aligned}
\end{equation}
Using $2(b_{\half}(x_{\half}y_0))b_0-(1-(a,b))\Big(b_{\half}(x_{\half}y_0)\Big)_0-\half(b,x_{\half}y_0)b_0=0,$ from Theorem \ref{ISI}(6), we compute \eqref{120a} minus \eqref{120b} to get,
\begin{gather*}
0=\\
-(a,b)\Big(b_{\half}(x_{\half}y_0)\Big)_0-\Big((b_{\half}y_0)x_{\half}\Big)_0+(b_{\half}x_{\half})y_0\\
-2((b_{\half}x_{\half})y_0)b_0-2\Big(b_{\half}((x_{\half}b_0)y_0)\Big)_0\\
+2\Big(b_{\half}(x_{\half}(y_0b_0))\Big)_0+2((b_{\half}y_0)x_{\half} )b_0\\=
\\
-(a,b)\Big(b_{\half}(x_{\half}y_0)\Big)_0-\Big((b_{\half}y_0)x_{\half}\Big)_0+(b_{\half}x_{\half})y_0\\
-2((b_{\half}x_{\half})y_0)b_0+2((b_{\half}y_0)x_{\half} )b_0 \\
+2\Big(b_{\half}(x_{\half}(y_0b_0))\Big)_0-2\Big(b_{\half}((x_{\half}b_0)y_0)\Big)_0\\
+2(b_{\half}(x_{\half}y_0))b_0-(1-(a,b))\Big(b_{\half}(x_{\half}y_0)\Big)_0-\half(b,x_{\half}y_0)b_0\\=
\\
-\Big(b_{\half}(x_{\half}y_0)\Big)_0-\Big((b_{\half}y_0)x_{\half}\Big)_0+(b_{\half}x_{\half})y_0\\
+2(b_{\half}(x_{\half}y_0))b_0-2((b_{\half}x_{\half})y_0)b_0+2((b_{\half}y_0)x_{\half} )b_0 \\
\textstyle{+2\Big(b_{\half}(x_{\half}(y_0b_0))\Big)_0-2\Big(b_{\half}((x_{\half}b_0)y_0)\Big)_0  -\half(b,x_{\half}y_0)b_0.}\qedhere
\end{gather*}
\end{proof}

\begin{lemma}\label{eq2x120b-120c}
\begin{equation}\label{eq2x120b-120c}
\begin{aligned}
&\Big(b_{\half}(x_{\half}y_0)\Big)_0+\Big((b_{\half}y_0)x_{\half}\Big)_0-(b_{\half}x_{\half} )y_0=\\
&4\Big((b_{\half}(x_{\half}(y_0b_0))\Big)_0+4((b_{\half}y_0)x_{\half} )b_0\\
&-(b,y_0)\Big(b_{\half}x_{\half}\Big)_0 -(b,x_{\half})y_0b_0-(b,x_{\half}y_0)b_0.
\end{aligned}
\end{equation}
\end{lemma}
\begin{proof}
By Lemma \ref{P120}(2),
\begin{equation}\label{120c} 
\begin{aligned}
&0=\\
&b_{\half}(x_{\half}y_0) +(2(a,b)-1) (b_{\half}y_0)x_{\half}-(b_{\half}x_{\half} )y_0+4(x_{\half}b_0)(b_{\half}y_0) \\
&+4 (b_{\half}x_{\half} )(y_0b_0)\\
&-(b,y_0)x_{\half}b_{\half}-(b,x_{\half})y_0b_0-(b,x_{\half}y_0)(a,b)a-(b,x_{\half}y_0)b_0.
\end{aligned}
\end{equation}
Computing $2$ times \eqref{120b} minus \eqref{120c} we get
\begin{equation*}
\begin{aligned}
&\Big(b_{\half}(x_{\half}y_0)\Big)_0+\Big((b_{\half}y_0)x_{\half}\Big)_0-(b_{\half}x_{\half} )y_0\\
&-4\Big((b_{\half}(x_{\half}(y_0b_0))\Big)_0-4((b_{\half}y_0)x_{\half} )b_0\\
&+(b,y_0)\Big(b_{\half}x_{\half}\Big)_0 +(b,x_{\half})y_0b_0+(b,x_{\half}y_0)b_0=0.\qedhere
\end{aligned}
\end{equation*}
\end{proof}

\begin{prop}\label{prop120a}$ $
\begin{enumerate}
\item 
$\half(1-(a,b))\Big[\Big(b_{\half}(x_{\half}y_0)\Big)_0+\Big((b_{\half}y_0)x_{\half}\Big)_0-(b_{\half}x_{\half} )y_0\Big]=$\\
$\Big(b_{\half}(x_{\half}(y_0b_0))\Big)+\Big((b_{\half}(y_0b_0))x_{\half}\Big)_0-(b_{\half}x_{\half} )(y_0b_0).$
\item 
$\Big(b_{\half}(x_{\half}(y_0b_0))\Big)_0 -(b_{\half}x_{\half} )(y_0b_0)=$\\
$\half(1-(a,b))\Big[\Big(b_{\half}(x_{\half}y_0)\Big)_0-(b_{\half}x_{\half} )y_0\Big]-\frac 14(b,y_0)\Big(b_{\half}x_{\half}\Big)_0.$
\end{enumerate}
\end{prop}
\begin{proof}
We put $y_0b_0$ in place of $y_0$ in \eqref{eq2x120b-120c}, noticing that by Theorem \ref{ISI}, if we denote by $z$ the element after the equality sign in \eqref{eq2x120b-120c}, then replacing $y_0$ by $y_0b_0$ in $z$ gives 
\begin{gather*}
\textstyle{\half(1-(a,b)) z+2(b,y_0)\Big(b_{\half}(x_{\half}b_0)\Big)_0+(b,y_0)(b_{\half}x_{\half})b_0}\\
\textstyle{-\half(1-(a,b))(b,y_0)\Big(b_{\half}x_{\half}\Big)_0 -\half(b,x_{\half})(b,y_0)b_0-\frac 14(b,y_0)(b,x_{\half})b_0.}
\end{gather*}
Hence
\begin{gather*}
\Big(b_{\half}(x_{\half}(y_0b_0))\Big)_0+\Big((b_{\half}(y_0b_0))x_{\half}-(b_{\half}x_{\half} )(y_0b_0)\Big)_0=\\
\textstyle{\half(1-(a,b))\Big[\Big(b_{\half}(x_{\half}y_0)\Big)_0+\Big((b_{\half}y_0)x_{\half}\Big)_0-(b_{\half}x_{\half} )y_0\Big]}\\
+2(b,y_0)\Big(b_{\half}(x_{\half}b_0)\Big)_0+(b,y_0)(b_{\half}x_{\half})b_0\\
\textstyle{-\half(1-(a,b))(b,y_0)\Big(b_{\half}x_{\half}\Big)_0 -\half(b,x_{\half})(b,y_0)b_0-\frac 14(b,y_0)(b,x_{\half})b_0}\\\iff
\\
\Big(b_{\half}(x_{\half}(y_0b_0))\Big)_0+\Big((b_{\half}(y_0b_0))x_{\half}\Big)_0-(b_{\half}x_{\half} )(y_0b_0)=\\
\textstyle{\half(1-(a,b))\Big[\Big(b_{\half}(x_{\half}y_0)\Big)_0+\Big((b_{\half}y_0)x_{\half}\Big)_0-(b_{\half}x_{\half} )y_0\Big]}\\\iff
\\
\Big(b_{\half}(x_{\half}(y_0b_0))\Big)_0 -(b_{\half}x_{\half} )(y_0b_0)\\=
\\
\textstyle{\half(1-(a,b))\Big[\Big(b_{\half}(x_{\half}y_0)\Big)_0-(b_{\half}x_{\half} )y_0\Big]-\frac 14(b,y_0)\Big(b_{\half}x_{\half}\Big)_0.}\qedhere
\end{gather*}
\end{proof}

\begin{lemma}\label{important}$ $
\begin{enumerate}
\item 
$\Big(b_{\half}((x_{\half}b_0)y_0)\Big)_0 + \Big((b_{\half}y_0)(x_{\half}b_0)\Big)_0-\frac 14(b,x_{\half})y_0b_0=0.$

\item 
$\Big(b_{\half}((x_{\half}b_0)(y_0b_0))\Big)_0=\\
-\half(1-(a,b))\Big((b_{\half}y_0)(x_{\half}b_0)\Big)_0+\frac 18(1-(a,b))(b,x_{\half})y_0b_0\\
+\frac{1}{16}(b,y_0)(b,x_{\half})b_0=\\
\half(1-(a,b))\Big(b_{\half}((x_{\half}b_0)y_0)\Big)_0+\frac{1}{16}(b,y_0)(b,x_{\half})b_0.$

\item 
If $(a,b)\notin\{\frac 14, 1\},$ then\\
$\Big(b_{\half}(x_{\half}(y_0b_0))\Big)_0=-\Big((b_{\half}y_0)(x_{\half}b_0)\Big)_0+\frac 14(b,x_{\half})y_0b_0\\
+\frac {1}{4}(b,x_{\half}y_0)b_0\\
=b_{\half}((x_{\half}b_0)y_0)+\frac {1}{4}(b,x_{\half}y_0)b_0.$

\end{enumerate}
\end{lemma}
\begin{proof}
(1)
Putting in \eqref{120c} $x_{\half}b_0$ in place of $x_{\half}$ we get
\begin{gather*}
0=\\
\textstyle{\Big(b_{\half}((x_{\half}b_0)y_0)\Big)_0 +(2(a,b)-1) \Big((b_{\half}y_0)(x_{\half}b_0)\Big)_0-\frac 14(b,x_{\half})y_0b_0}\\
+2(1-(a,b))\Big((x_{\half}b_0)(b_{\half}y_0)\Big)_0+(b,x_{\half})(y_0b_0)b_0\\
\textstyle{-\frac 14(b,y_0)(b,x_{\half})b_0-\half(1-(a,b))(b,x_{\half})y_0b_0-\frac 14(b,x_{\half})(b,y_0)b_0}\\=
\\
\textstyle{\Big(b_{\half}((x_{\half}b_0)y_0)\Big)_0 + \Big((b_{\half}y_0)(x_{\half}b_0)\Big)_0-\frac 14(b,x_{\half})y_0b_0} \\
\textstyle{+\half(1-(a,b))(b,x_{\half})y_0b_0+\half(b,y_0)(b,x_{\half})b_0}\\
\textstyle{-\frac 14(b,y_0)(b,x_{\half})b_0-\half(1-(a,b))(b,x_{\half})y_0b_0-\frac 14(b,x_{\half})(b,y_0)b_0}\\=
\\
\textstyle{\Big(b_{\half}((x_{\half}b_0)y_0)\Big)_0 + \Big((b_{\half}y_0)(x_{\half}b_0)\Big)_0-\frac 14(b,x_{\half})y_0b_0.}
\end{gather*}
\medskip

\noindent
(2) Putting in (1) $y_0b_0$ in place of $y_0,$ we get
\begin{gather*}
\textstyle{\Big(b_{\half}((x_{\half}b_0)(y_0b_0))\Big)_0=-\Big((b_{\half}(y_0b_0))(x_{\half}b_0)\Big)_0+\frac 14(b,x_{\half})(y_0b_0)b_0}\\
\textstyle{=-\half(1-(a,b))\Big((b_{\half}y_0)(x_{\half}b_0)\Big)_0-\frac{1}{16}(b,y_0)(b,x_{\half})b_0}\\
\textstyle{+\frac 18(1-(a,b))(b,x_{\half})y_0b_0+\frac 18(b,x_{\half})(b,y_0)b_0.}
\end{gather*}
\medskip

\noindent
(3)
By Lemma \ref{lem121212}(1) and part (2), using Theorem \ref{SI},
\begin{gather*}
(1-(a,b))\Big(b_{\half}((y_{\half}x_0)b_0)\Big)_0-2\Big(b_{\half}(((y_{\half}b_0)x_0)b_0)\Big)_0\\
=(1-(a,b)) \Big(b_{\half}(y_{\half}(x_0b_0))\Big)_0
-2\Big(b_{\half}((y_{\half}b_0)(b_0x_0))\Big)_0\\\iff
\\
\textstyle{\frac 14(1-(a,b))(b,y_{\half}x_0)b_0-\half(b,(y_{\half}b_0)x_0))b_0} \\
=(1-(a,b)) \Big(b_{\half}(y_{\half}(x_0b_0))\Big)_0+(1-(a,b))\Big((b_{\half}x_0)(y_{\half}b_0)\Big)_0\\
\textstyle{-\frac 14(1-(a,b))(b,y_{\half})x_0b_0
-\frac{1}{8}(b,x_0)(b,y_{\half})b_0}
\end{gather*}
\begin{gather*}
\\\iff
\\
\textstyle{\frac {1}{4}(1-(a,b))(b,y_{\half}x_0)b_0-\frac 18(b,y_{\half})(b,x_0)b_0} \\
=(1-(a,b)) \Big(b_{\half}(y_{\half}(x_0b_0))\Big)_0\\
\textstyle{+(1-(a,b))\Big((b_{\half}x_0)(y_{\half}b_0)\Big)_0-\frac 14(1-(a,b))(b,y_{\half})x_0b_0}\\
\textstyle{-\frac{1}{8}(b,x_0)(b,y_{\half})b_0,}
\end{gather*}
so the first equality in (3) follows from by replacing $y_{\half}$ with $x_{\half},$ and $x_0$ with $y_0.$  The second equality follows from this and (1).
\end{proof}

\begin{cor}\label{cor120}$ $
\begin{enumerate}

\item 
$(2(a,b)-1)\Big[\Big(b_{\half}(x_{\half}y_0)\Big)_0-(b_{\half}x_{\half} )y_0+\Big((b_{\half}y_0)x_{\half}\Big)_0\Big]$\\
$-4b_{\half}((x_{\half}b_0)y_0)+4\Big(b_{\half}(x_{\half}(y_0b_0))\Big)_0-(b,x_{\half}y_0)b_0=0. $                

\item 
If $(a,b)\notin\{\frac 14, 1\},$ then\\
$(2(a,b)-1)\Big[\Big(b_{\half}(x_{\half}y_0)\Big)_0-(b_{\half}x_{\half} )y_0+\Big((b_{\half}y_0)x_{\half}\Big)_0\Big]=0$
\end{enumerate}
\end{cor}
\begin{proof}
By Proposition \ref{prop120a}(2),
 $4(b_{\half}x_{\half} )(y_0b_0)-(b,y_0)\Big(b_{\half}x_{\half}\Big)_0\quad =\\
4\Big(b_{\half}(x_{\half}(y_0b_0))\Big)_0+2((a,b)-1)\Big[\Big(b_{\half}(x_{\half}y_0)\Big)_0-(b_{\half}x_{\half} )y_0\Big].$

Hence by \eqref{120c}, and since $\Big(b_{\half}((x_{\half}b_0)y_0)\Big)_0 + \Big((b_{\half}y_0)(x_{\half}b_0)\Big)_0-\frac 14(b,x_{\half})y_0b_0=0,$ by Lemma \ref{important}(1),
\begin{gather*}
0=\\
\Big(b_{\half}(x_{\half}y_0)\Big)_0 +(2(a,b)-1) \Big((b_{\half}y_0)x_{\half}\Big)_0-(b_{\half}x_{\half} )y_0+4\Big((x_{\half}b_0)(b_{\half}y_0)\Big)_0 \\
+4 (b_{\half}x_{\half} )(y_0b_0)-(b,y_0)\Big(b_{\half}x_{\half}\Big)_0\\
  -(b,x_{\half})y_0b_0-(b,x_{\half}y_0)b_0\\=
  \\
(2(a,b)-1)\Big[\Big(b_{\half}(x_{\half}y_0)\Big)_0-(b_{\half}x_{\half} )y_0+\Big((b_{\half}y_0)x_{\half}\Big)_0\Big]\\
+4\Big((x_{\half}b_0)(b_{\half}y_0)\Big)_0+4\Big(b_{\half}(x_{\half}(y_0b_0))\Big)_0\\
 -(b,x_{\half})y_0b_0-(b,x_{\half}y_0)b_0\\=
 \\
 (2(a,b)-1)\Big[\Big(b_{\half}(x_{\half}y_0)\Big)_0-(b_{\half}x_{\half} )y_0+\Big((b_{\half}y_0)x_{\half}\Big)_0\Big]\\
-4b_{\half}((x_{\half}b_0)y_0)+4\Big(b_{\half}(x_{\half}(y_0b_0))\Big)_0-(b,x_{\half}y_0)b_0.
\end{gather*}
%
%
%
\medskip

\noindent
(2)  Follows from (1) and Lemma \ref{important}(3).
\end{proof}

\begin{cor}\label{cor120a}
$m=\half(1-(a,b))\Big((b_{\half}y_0)(x_{\half}b_0)\Big)_0+\frac{1}{16}(b,x_{\half})(b,y_0)b_0.$
\end{cor}
\begin{proof}
By Lemma \ref{important}(1), $\Big((b_{\half}y_0)(x_{\half}b_0)\Big)_0=-\Big(b_{\half}((x_{\half}b_0)y_0)\Big)_0+\frac 14(b,x_{\half})y_0b_0.$  Note that by Theorem \ref{SI}, for $z:=-\Big(b_{\half}((x_{\half}b_0)y_0)\Big)_0+\frac 14(b,x_{\half})y_0b_0,$ we have
\begin{gather*}
\textstyle{zb_0=\half(1-(a,b))z-\frac 14(b,(x_{\half}b_0)y_0)b_0+\frac 18(b,x_{\half})(b,y_0)b_0}\\
\textstyle{=\half(1-(a,b))z+\frac{1}{16} (b,x_{\half})(b,y_0)b_0.}
\end{gather*}
But $z=\Big((b_{\half}y_0)(x_{\half}b_0)\Big)_0,$ so the colrollary holds.
\end{proof}

%
\begin{prop}\label{prop120b}$ $
\begin{enumerate}
\item
$(a,b)\Big[\Big(b_{\half}(x_{\half}y_0)\Big)_0-(b_{\half}x_{\half})y_0+\Big((b_{\half}y_0)x_{\half}\Big)_0\Big]$\\
$+2((x_{\half}b_0)(b_{\half}y_0) -2((b_{\half}y_0)x_{\half} )b_0+\half(b,y_0)\Big(b_{\half}x_{\half}\Big)_0=0.$
 
\item
Let $z:=\Big(b_{\half}(x_{\half}y_0)\Big)_0- (b_{\half}x_{\half})y_0+\Big((b_{\half}y_0)x_{\half}\Big)_0.$ If $(a,b)\ne 0,$ then $zb_0=\half(1-(a,b)z.$
\end{enumerate}
\end{prop}
\begin{proof}
(1) By \eqref{120b} and Proposition \ref{prop120a}(2),
\begin{gather*}
0=\\
\Big(b_{\half}(x_{\half}y_0)\Big)_0+(a,b)\Big((b_{\half}y_0)x_{\half}\Big)_0-(b_{\half}x_{\half})y_0+2\Big((x_{\half}b_0)(b_{\half}y_0)\Big)_0\\
\textstyle{-(1-(a,b))\Big[\Big(b_{\half}(x_{\half}y_0)\Big)_0-(b_{\half}x_{\half})y_0\Big]+\half(b,y_0)\Big(b_{\half}x_{\half}\Big)_0.}\\
 -2((b_{\half}y_0)x_{\half} )b_0\\=
 \\
(a,b)\Big[\Big(b_{\half}(x_{\half}y_0)\Big)_0-(b_{\half}x_{\half})y_0+(b_{\half}y_0)x_{\half}\Big]\\
\textstyle{+2((x_{\half}b_0)(b_{\half}y_0) -2((b_{\half}y_0)x_{\half} )b_0+\half(b,y_0)\Big(b_{\half}x_{\half}\Big)_0.}
\end{gather*}
\medskip

\noindent
(2)
By (1), $(a,b)z=-w,$ with $w=2((x_{\half}b_0)(b_{\half}y_0) -2((b_{\half}y_0)x_{\half} )b_0+\half(b,y_0)\Big(b_{\half}x_{\half}\Big)_0.$  However, by Corollary \ref{cor120a}, and by Theorem \ref{ISI},
\begin{gather*}
\textstyle{wb_0=\half(1-(a,b))w+\frac 18(b,x_{\half})(b,y_0)b_0-(b_0,(b_{\half}y_0)x_{\half})b_0+\frac 18(b,y_0)(b,x_{\half})b_0}\\
\textstyle{=\half(1-(a,b))w.}\qedhere
\end{gather*}
\end{proof}
We can now prove Theorem \ref{thm120}.
\medskip

\noindent
\begin{proof}[{\bf Proof of Theorem \ref{thm120}.}]$ $
\medskip

\noindent
By Corollary \ref{cor120}(2), we may assume that $(a,b)=\half.$  

Let $z:=\Big(b_{\half}(x_{\half}y_0)\Big)_0-(b_{\half}x_{\half})y_0+\Big((b_{\half}y_0)x_{\half}\Big)_0.$  By Proposition \ref{prop120b}(2), $zb_0=\half(1-(a,b))z.$ By \eqref{importanteq}, 
\[
\textstyle{z=2zb_0+2b_{\half}(x_{\half}(y_0b_0) )-2b_{\half}((x_{\half}b_0)y_0)  -\half(b,x_{\half}y_0)b_0.} 
\]
But by Lemma \ref{important}(3), $2b_{\half}(x_{\half}(y_0b_0) )-2b_{\half}((x_{\half}b_0)y_0)  -\half(b,x_{\half}y_0)b_0=0.$  Hence, by Proposition \ref{prop120b}(2), $z=2zb_0=(1-(a,b))z.$  It follows that $(a,b)z=0,$ so $z=0.$
\end{proof}

\section{The case $(a,b)=1$}

In this section we assume that $a,b$ are axes and that $(a,b)=1.$ 

\begin{thm}\label{ab12}
\begin{enumerate}
\item 
If $b_0=0,$ then $[L_a,L_b]$ is a derivation.

\item 
Assume $\charc(\ff)\ne 3.$  Then
\begin{itemize}
\item[(i)]
$(x_{\half}y_{\half})b_{\half}+(b_{\half}x_{\half} )y_{\half} +(b_{\half}y_{\half} )x_{\half}=$\\
$\half(b,y_{\half})x_{\half}+\half(b,x_{\half})y_{\half} 
 +(a,x_{\half}y_{\half})b_{\half}.$
 
\item[(ii)] 
$b_{\half}(x_{\half}y_0)+(b_{\half}y_0)x_{\half} 
-(b_{\half}x_{\half})y_0=0.$
\end{itemize}
\end{enumerate}
 In particular, $[L_a,L_b]$ is a derivation.
\end{thm}

\noindent
{\it Proof.}
{\bf (1)} Suppose $b_0=0.$  Then, by Lemma \ref{120}(3), $\Big(b_{\half}(x_{\half}y_0)\Big)_0=(b_{\half}x_{\half})y_0-\Big(x_{\half}(b_{\half}y_0)\Big)_0.$  By Lemma \ref{P1212}(2), $(x_{\half}y_{\half})b_{\half}+(b_{\half}x_{\half} )y_{\half} +(b_{\half}y_{\half} )x_{\half}=
 \half(b,y_{\half})x_{\half}+\half(b,x_{\half})y_{\half} 
 +(a,x_{\half}y_{\half})b_{\half},$ because $(b,x_{\half}y_{\half})=(a,x_{\half}y_{\half}).$  So (1) follows from Theorem \ref{0012}, and Theorem \ref{der}.
 \medskip

\noindent
%
{\bf (2)} 
Note that since $(a,b)=1,$  theorem \ref{SI}  implies
\begin{equation}\label{eqSI}
\begin{aligned}
&(1)\ (x_{\half}b_0)b_0=0.\\
& (2)\ \textstyle{(x_0b_0)b_0= \half(b,x_0)b_0.}\\
& (3)\ \textstyle{b_{\half}(b_{\half}x_0)=\half(b,x_0)a+\half x_0b_0.}\\
&(4)\ \textstyle{b_{\half}(b_{\half}x_{\half})=-\half x_{\half}b_0+\frac 12(b,x_{\half})b_{\half}\text{ and}}\\
&\quad\ \ \textstyle{b_{\half}\Big(b_{\half}x_{\half}\Big)_0=-\half x_{\half}b_0+\frac 14(b,x_{\half})b_{\half} .}\\
&(5)\ \textstyle{b_{\half}(x_{\half}b_0)=(b_{\half}x_{\half})b_0=\frac 14(b,x_{\half})b_0.}\\
&(6)\ \textstyle{b_{\half}(x_0b_0)=\frac 14(b,x_0)b_{\half}.}\\
&(7)\ (b_0, (x_{\half}b_0)y)=0=(b_{\half}, yb_0),\ \forall y\in A.\\
&(8)\ b_{\half}b_0=b_0^2=0\ \text{and $b_{\half}^2=b_0,$ by Theorem \ref{Q}.}
\end{aligned}
\end{equation}

 \begin{lemma}\label{lemab11}$ $
\begin{enumerate}
\item 
$(b_{\half}y_{\half})(x_{\half}b_0)=\half(b,y_{\half})x_{\half}b_0-\frac 14(b,x_{\half})y_{\half}b_0+\half(b_0,x_{\half}y_{\half})b_{\half}-b_{\half}(y_{\half}(x_{\half}b_0)).$

\item
$4(b_{\half}y_{\half})(x_{\half}b_0)+4(b_{\half}x_{\half})(y_{\half}b_0)=\\(b,y_{\half}) x_{\half}b_0+(b,x_{\half}) y_{\half}b_0+4(b_0,x_{\half}y_{\half})b_{\half}-4b_{\half}(y_{\half}(x_{\half}b_0))-4b_{\half}(x_{\half}(y_{\half}b_0)).$

\item 
$((b_{\half}y_{\half})(x_{\half}b_0))b_0=0.$

\item 
$((b_{\half}x_{\half} )y_{\half})b_0 +((b_{\half}y_{\half} )x_{\half})b_0 = 
\half\Big[(b,y_{\half})x_{\half}b_0+(b,x_{\half})y_{\half}b_0\Big]-\frac 14(b_0,x_{\half}y_{\half})b_{\half}.$

\item 
$4(b_{\half}y_{\half})(x_{\half}b_0)+4(b_{\half}x_{\half})(y_{\half}b_0)\quad =
\\
(b,y_{\half})x_{\half}b_0+(b,x_{\half})y_{\half}b_0 -\half (b_0,x_{\half}y_{\half})b_{\half}
+2b_{\half}(x_{\half}(y_{\half}b_0))+2b_{\half}(y_{\half}(x_{\half}b_0)).$

\item 
$6\Big[b_{\half}(y_{\half}(x_{\half}b_0))+b_{\half}(x_{\half}(y_{\half}b_0))\Big]=\frac 92 (b_0,x_{\half}y_{\half})b_{\half}.$


\end{enumerate}
 \end{lemma}
\begin{proof} 
(1)
By Lemma \ref{P1212}(2),
\begin{equation}\label{eqab17}
\begin{aligned}
&(b_{\half}x_{\half} )y_{\half} +(b_{\half}y_{\half} )x_{\half}+4(b_{\half}x_{\half} )(y_{\half}b_0) +4(b_{\half}y_{\half})(x_{\half}b_0)\quad =  \\
&\textstyle{\half\Big[(b,y_{\half})x_{\half}+(b,x_{\half})y_{\half}\Big]+(b,y_{\half})x_{\half}b_0+(b,x_{\half})y_{\half}b_0}\\
&+(b,x_{\half}y_{\half})b_{\half}-(x_{\half}y_{\half})b_{\half}.
\end{aligned}
\end{equation}
 Replacing $x_{\half}$ by $x_{\half}b_0$ in \eqref{eqab17} using \eqref{eqSI}, we get

\begin{gather*}
\textstyle{\frac 14(b,x_{\half})y_{\half}b_0 +(b_{\half}y_{\half})(x_{\half}b_0)+0 +0\quad =  }\\
\textstyle{\half\Big[(b,y_{\half})x_{\half}b_0+0\Big]+0+0}\\
\textstyle{+(a,(x_{\half}b_0)y_{\half})b_{\half}-b_{\half}((x_{\half}b_0)y_{\half}),}
\end{gather*}
so (1) holds.
\medskip

\noindent
(2)  This follows by interchanging $x_{\half}$ and $y_{\half}$ in (1) and adding.
\medskip

\noindent
(3)
Multiplying (1) by $b_0$ using \eqref{eqSI} we get $((b_{\half}y_{\half})(x_{\half}b_0))b_0=0-0+0-(b_{\half}(y_{\half}(x_{\half}b_0)))b_0=\frac 14(b_0,(y_{\half}(x_{\half}b_0))b_0=0.$
\medskip

\noindent
(4)
Multiplying \eqref{eqab17} by $b_0$ using (3) and \eqref{eqSI}(6) we get 
\begin{gather*}
((b_{\half}x_{\half} )y_{\half})b_0 +((b_{\half}y_{\half} )x_{\half})b_0 
 +0 +0\quad =  \\
\textstyle{\half\Big[(b,y_{\half})x_{\half}b_0+(b,x_{\half})y_{\half}b_0\Big]+0+0}\\
\textstyle{+0-\frac 14(b_0,x_{\half}y_{\half})b_{\half}.}
\end{gather*}
\medskip

\noindent
(5)
By Lemma \ref{1212}(3b) we have
\begin{gather*}
+4(b_{\half}y_{\half})(x_{\half}b_0)+4(b_{\half}x_{\half})(y_{\half}b_0)\quad =
\\
2b_{\half}(x_{\half}(y_{\half}b_0))+2b_{\half}(y_{\half}(x_{\half}b_0))\\
+2((b_{\half}y_{\half})x_{\half})b_0+2((b_{\half}x_{\half})y_{\half})b_0.
\end{gather*} 
Using (4) we get
\begin{gather*}
+4(b_{\half}y_{\half})(x_{\half}b_0)+4(b_{\half}x_{\half})(y_{\half}b_0)\quad =
\\
2b_{\half}(x_{\half}(y_{\half}b_0))+2b_{\half}(y_{\half}(x_{\half}b_0))\\
\textstyle{+(b,y_{\half})x_{\half}b_0+(b,x_{\half})y_{\half}b_0-\half (b_0,x_{\half}y_{\half})b_{\half}.}
\end{gather*} 
\medskip

\noindent
(6) 
By (2) and (5),
\begin{gather*}
4(b_0,x_{\half}y_{\half})b_{\half}-4\Big[b_{\half}(y_{\half}(x_{\half}b_0))+b_{\half}(x_{\half}(y_{\half}b_0))\Big]=\\\textstyle{-\half(b_0,x_{\half}y_{\half})b_{\half}+2\Big[b_{\half}(y_{\half}(x_{\half}b_0))+b_{\half}(x_{\half}(y_{\half}b_0))\Big],}
\end{gather*}
so (6) holds.
\end{proof}
\begin{proof}[{\bf Proof of Theorem \ref{ab12}(2i)}]$ $
\medskip

\noindent
By  \eqref{eqab17} and Lemma \ref{lemab11}(2),\\
$b_{\half}(x_{\half}y_{\half})+(b_{\half}x_{\half} )y_{\half} +(b_{\half}y_{\half} )x_{\half}=\\
\half(b,y_{\half})x_{\half}+\half(b,x_{\half})y_{\half}+(a,x_{\half}y_{\half})b_{\half}-3(b_0,x_{\half}y_{\half})+4\Big[b_{\half}(y_{\half}(x_{\half}b_0))+b_{\half}(x_{\half}(y_{\half}b_0))\Big],$ so (2i) follows from Lemma \ref{lemab11}(6), since $\charc(\ff)\ne 3.$
\end{proof}
 
For the proof of part (2ii), we need the following three lemmas.
%
\begin{lemma}\label{claim1}$ $
\begin{enumerate}
\item 
$\Big((b_{\half}y_0)(x_{\half}b_0)\Big)_0+2(x_{\half}b_{\half})(y_0b_0)=\\
\frac 34(b,y_0)\Big(x_{\half}b_{\half}\Big)_0+\half(b,x_{\half})y_0b_0
-(x_{\half}(b_{\half}y_0))b_0+\half(b,x_{\half}y_0)b_0.$

\item 
$\Big((x_{\half}b_0)(y_{\half}b_0)\Big)_0+((x_{\half}b_0)y_{\half})b_0-\half(b_0,x_{\half}y_{\half})b_0=0.$

\item 
$(x_{\half}b_0)(b_{\half}y_0)b_0=\frac{1}{16}(b,x_{\half})(b,y_0)b_0.$

\item 
$\Big(b_{\half}(x_{\half}(y_0b_0))\Big)_0+\frac 14(b,y_0)\Big(b_{\half}x_{\half}\Big)_0-(b_{\half}x_{\half})(y_0b_0)=0.$

\item 
$\Big(b_{\half}(x_{\half}y_0)\Big)_0+\Big((b_{\half}y_0)x_{\half}\Big)_0 
-(b_{\half}x_{\half})y_0+ \\
2\Big((x_{\half}b_0)(b_{\half}y_0)\Big)_0
 +\half(b,y_0)\Big(b_{\half}x_{\half}\Big)_0 -2((b_{\half}y_0)x_{\half} )b_0= 0.$

\item 
$\Big(b_{\half}((x_{\half}b_0)y_0)\Big)_0+\Big((b_{\half}y_0)(x_{\half}b_0)\Big)_0 -\frac 14 (b,x_{\half}) y_0b_0= 0.$

\item 
\begin{itemize}
\item[(i)] 
$\Big[b_{\half}(x_{\half}y_0)\Big)_0+\Big((b_{\half}y_0)x_{\half}\Big)_0 
-(b_{\half}x_{\half})y_0\Big]b_0=\\\frac 14(b,x_{\half}y_0)b_0+((b_{\half}y_0)x_{\half})b_0 
-((b_{\half}x_{\half})y_0)b_0=0.$

\item[(ii)] 
$\Big(b_{\half}(x_{\half}y_0)\Big)_0-(b_{\half}x_{\half})y_0+\Big((b_{\half}y_0)x_{\half}\Big)_0=\\
2\Big(b_{\half}(x_{\half}(y_0b_0))\Big)_0-2\Big(b_{\half}((x_{\half}b_0)y_0)\Big)_0  -\half(b,x_{\half}y_0)b_0.$

\item[(iii)] 
$((b_{\half}y_0)x_{\half})b_0-\half(b,x_{\half}y_0)b_0=- \frac 34 (b,x_{\half}y_0)b_0+((b_{\half}x_{\half})y_0)b_0.$
\end{itemize}

 

\end{enumerate}
\end{lemma}
\begin{proof}
 By Lemma \ref{P1212}(1),
\begin{equation}\label{eq1P12121}
\begin{aligned}
&\Big(x_{\half}(y_{\half}b_0)\Big)_0+\Big(y_{\half}(x_{\half}b_0)\Big)_0+4\Big((x_{\half}b_{\half})(y_{\half}b_{\half})\Big)_0\\
&+4\Big((x_{\half}b_0)(y_{\half}b_0)\Big)_0
 \quad =\\
 &(b,y_{\half})\Big(x_{\half}b_{\half}\Big)_0+(b,x_{\half})\Big(y_{\half}b_{\half}\Big)_0
-(x_{\half}y_{\half})b_0+(b,x_{\half}y_{\half})b_0.
\end{aligned}
\end{equation}
(1) Replacing in \eqref{eq1P12121} $y_{\half}$ by $b_{\half}y_0$ we get
\begin{equation*}
\begin{aligned}
&\textstyle{\frac 14 (b,y_0)\Big(x_{\half}b_{\half}\Big)_0
+\Big((b_{\half}y_0)(x_{\half}b_0)\Big)_0+2(x_{\half}b_{\half})(y_0b_0)+\frac 14(b,y_0)(b,x_{\half})b_0=}\\
&\textstyle{(b,y_0)\Big(x_{\half}b_{\half}\Big)_0+\half(b,x_{\half})y_0b_0
-(x_{\half}(b_{\half}y_0))b_0+\half(b,x_{\half}y_0)b_0+\frac 14(b,y_0)(b, x_{\half})b_0.}
\end{aligned}
\end{equation*}
\medskip

\noindent
(2) Replacing $x_{\half}$ with $x_{\half}b_0$ in \eqref{eq1P12121}, using \eqref{eqSI} we get
\begin{equation*}
\begin{aligned}
&\textstyle{\Big((x_{\half}b_0)(y_{\half}b_0)\Big)_0+0+\frac 14(b,x_{\half})(b,y_{\half})b_0+0\quad =}\\
 &\textstyle{\frac 14(b,x_{\half})(b,y_{\half})b_0+0
-((x_{\half}b_0)y_{\half})b_0+\half(b_0,x_{\half}y_{\half})b_0.}
\end{aligned}
\end{equation*}
\medskip

\noindent
(3) Replacing in (1) $x_{\half}$ with $x_{\half}b_0$ we get
\begin{gather*}
\textstyle{0+\frac 14(b,x_{\half})(b,y_0)b_0 =}\\
\textstyle{\frac {3}{16}(b,y_0)(b,x_{\half})b_0+0
-((x_{\half}b_0)(b_{\half}y_0))b_0+\frac 18(b,x_{\half})(b,y_0)b_0.}
\end{gather*}
\medskip

\noindent
(4\&5)  By Lemma \ref{120}(3),
\begin{equation*}\tag{$*$}
\begin{aligned} 
&\Big(b_{\half}(x_{\half}y_0)\Big)_0+\Big((b_{\half}y_0)x_{\half}\Big)_0 
-(b_{\half}x_{\half})y_0+2\Big((x_{\half}b_0)(b_{\half}y_0)\Big)_0\\
&+2(b_{\half}x_{\half})(y_0b_0)-2\Big((b_{\half}(x_{\half}(y_0b_0)\Big)_0 -2((b_{\half}y_0)x_{\half} )b_0= 0.
\end{aligned}
\end{equation*}
Replacing $y_0$ with $y_0b_0$, and using  \eqref{eqSI}, we get
\begin{gather*}
\textstyle{\Big(b_{\half}(x_{\half}(y_0b_0))\Big)_0+\frac 14(b,y_0)\Big(b_{\half}x_{\half}\Big)_0-(b_{\half}x_{\half})(y_0b_0)
+\frac 18(b,y_0)(b,x_{\half})b_0}\\
\textstyle{+\frac 14(b,y_0)(b,x_{\half})b_0-\frac 14(b,y_0)(b,x_{\half})b_0 
-\frac 18(b,y_0)(b,x_{\half})b_0=0,}
\end{gather*}
so (4) holds; going back to ($*$) we get (5).
\medskip

\noindent
(6) Replacing $x_{\half}$ with $x_{\half}b_0$ in (5), we get
\begin{equation*}
\begin{aligned} 
&\Big(b_{\half}((x_{\half}b_0)y_0)\Big)_0+\Big((b_{\half}y_0)(x_{\half}b_0)\Big)_0 
-(b_{\half}(x_{\half}b_0))y_0+0\\
&\textstyle{+\half(b,y_0)\Big(b_{\half}(x_{\half}b_0)\Big)_0 -2((b_{\half}y_0)(x_{\half}b_0) )b_0= 0.}
\end{aligned}
\end{equation*}
So (6) follows from (3) and Theorem \ref{SI}(5).
\medskip

\noindent
(7) (i) Multiplying (5) by $b_0$ using \eqref{eqSI} we get
\begin{equation*}
\begin{aligned} 
&\Big[\Big(b_{\half}(x_{\half}b_0)\Big)_0+(b_{\half}y_0)x_{\half} -(b_{\half}x_{\half})y_0\Big]b_0\\
&\textstyle{+\frac 18(b,x_{\half})(b,y_0)b_0+\frac 18(b,x_{\half})(b,y_0)b_0-(b_0,(b_{\half}y_0)x_{\half})b_0
= 0.}
\end{aligned}
\end{equation*}
Since $(b_0,(b_{\half}y_0)x_{\half})=\frac 14(b,x_{\half})(b,y_0)b_0,$ and $b_{\half}(x_{\half}b_0)b_0=\frac 14(b,x_{\half}y_0)b_0,$ (7i) holds.
\medskip

\noindent
(ii) This follows from \eqref{importanteq} and (i).
\medskip

\noindent
(iii)  This follows from (i).
\medskip

\noindent

\end{proof}

\begin{lemma}\label{lem 8.5}
\begin{gather*}
 2(b_{\half}y_0)(x_{\half}b_0) +2(b_{\half}x_{\half})(y_0b_0 )-2((b_{\half}x_{\half})y_0)b_0\\
 -2b_{\half}((x_{\half}b_0)y_0)=0.
\end{gather*}
\end{lemma}
\begin{proof}
This follows from equation \eqref{120a}.
\end{proof}

\begin{lemma}\label{fact7}
 \begin{enumerate}
\item 
$(x_0b_0)(y_0b_0)=(x_0(y_0b_0))b_0.$

\item
$(x_0y_0)b_0+2\Big((b_{\half}x_0)(b_{\half}y_0)\Big)_0=y_0(x_0b_0)+2\Big(b_{\half}((b_{\half}y_0)x_0)\Big)_0.$

\item
$4\Big((b_{\half}x_0)(b_{\half}y_0)\Big)_0  =y_0(x_0b_0)+x_0(y_0b_0)-(x_0y_0)b_0.$

\item 
\begin{itemize}
\item[(i)]
$2\Big((x_{\half}b_0)(b_{\half}y_0)\Big)_0-\frac 14(b,x_{\half})y_0b_0+(b_{\half}x_{\half})(y_0b_0)
 -((b_{\half}x_{\half})y_0)b_0=0.$

\item[(ii)] 
$2(b_{\half}y_0)(x_{\half}b_0)+2b_{\half}((x_{\half}b_0)y_0)-\half(b,x_{\half}) y_0b_0=0.$
\end{itemize} 
\item 
If $\charc(\ff)\ne 3,$ then
\begin{itemize}
\item[(i)]
$\Big((b_{\half}y_0)(x_{\half}b_0)\Big)_0+(x_{\half}b_{\half})(y_0b_0)- \frac 14 (b,x_{\half}y_0)b_0\\
=\frac 14(b,y_0)\Big(x_{\half}b_{\half}\Big)_0+\frac 14(b,x_{\half})y_0b_0.$

\item[(ii)] 
$\Big((b_{\half}y_0)(x_{\half}b_0)\Big)_0+\Big(b_{\half}(x_{\half}(y_0b_0))\Big)_0- \frac 14 (b,x_{\half}y_0)b_0\\
=\frac 14(b,x_{\half})y_0b_0.$
\item[(iii)] $-\Big(b_{\half}((x_{\half}b_0)y_0)\Big)_0+\Big(b_{\half}(x_{\half}(y_0b_0))\Big)_0- \frac 14 (b,x_{\half}y_0)b_0=0.$
 \end{itemize}
\end{enumerate} 
 %
 \end{lemma}
 \begin{proof}
(1\&2):\ By Lemma \ref{P}(9),
\begin{gather*}
(x_0y_0)b_0+2(x_0b_0)(y_0b_0)+2\Big((b_{\half}x_0)(b_{\half}y_0)\Big)_0\\
=y_0(x_0b_0)+2(x_0(y_0b_0))b_0+2\Big(b_{\half}((b_{\half}y_0)x_0)\Big)_0.
\end{gather*}
Replacing $y_0$ with $y_0b_0$ using \eqref{eqSI}, we get
\begin{gather*}
\textstyle{(x_0(y_0b_0))b_0+\half(b,x_0)(b,y_0)b_0+\frac 14(b,y_0)x_0b_0}\\
\textstyle{=(y_0b_0)(x_0b_0)+\half(b,x_0)(b,y_0)b_0 +\frac 14(b,y_0)x_0b_0.}
\end{gather*}
This shows (1), and then (2) follows.
\medskip

\noindent
(3) Interchanging $x_0, y_0$ in (2) and adding, and then using Theorem \ref{0012} and \eqref{eqSI}(5), we get
\begin{gather*}
2(x_0y_0)b_0+4\Big((b_{\half}x_0)(b_{\half}y_0)\Big)_0\\
=y_0(x_0b_0)+x_0(y_0b_0)+2\Big(b_{\half}((b_{\half}y_0)x_0)\Big)_0+2\Big(b_{\half}((b_{\half}x_0)y_0)\Big)_0\\=
\\
y_0(x_0b_0)+x_0(y_0b_0)+(x_0y_0)b_0.
\end{gather*}
\medskip

\noindent
(4)
(i) Putting $\Big(b_{\half}x_{\half}\Big)_0$ in place of $x_0$ in (3) we get using \eqref{eqSI},
\begin{gather*}
\textstyle{4\Big(\Big[-\half x_{\half}b_0+\frac 14(b,x_{\half})b_{\half} \Big](b_{\half}y_0)\Big)_0}\\
\textstyle{=\frac 14(b,x_{\half})y_0b_0+(b_{\half}x_{\half})(y_0b_0)-((b_{\half}x_{\half})y_0)b_0}\\\iff
\\
\textstyle{-2\Big((x_{\half}b_0)(b_{\half}y_0)\Big)_0+\half (b,x_{\half})y_0b_0}\\
\textstyle{=\frac 14(b,x_{\half})y_0b_0+(b_{\half}x_{\half})(y_0b_0)-((b_{\half}x_{\half})y_0)b_0.}
\end{gather*}
\medskip

\noindent
(ii) Computing $2$ times the equality in (i) minus the equality in Lemma \ref{lem 8.5} yields (ii).
\medskip

\noindent
(5)  (i) By Lemma \ref{claim1} (1) and (7)(iii),
\begin{gather*}
\textstyle{\Big((b_{\half}y_0)(x_{\half}b_0)\Big)_0+2(x_{\half}b_{\half})(y_0b_0)- \frac 34 (b,x_{\half}y_0)b_0+((b_{\half}x_{\half})y_0)b_0=}\\
\textstyle{\frac 34(b,y_0)\Big(x_{\half}b_{\half}\Big)_0+\half(b,x_{\half})y_0b_0.}
\end{gather*}
Adding (4i) we get
\begin{gather*}
\textstyle{3\Big((b_{\half}y_0)(x_{\half}b_0)\Big)_0+3(x_{\half}b_{\half})(y_0b_0)- \frac 34 (b,x_{\half}y_0)b_0}\\
\textstyle{=\frac 34(b,y_0)\Big(x_{\half}b_{\half}\Big)_0+\frac 34(b,x_{\half})y_0b_0.}
\end{gather*}
\medskip

\noindent
(ii) By Lemma \ref{claim1}(4), $(b_{\half}x_{\half})(y_0b_0)-\frac 14(b,y_0)\Big(b_{\half}x_{\half}\Big)_0=\Big(b_{\half}(x_{\half}(y_0b_0))\Big)_0,$ so (ii) follows from (i).
\medskip

\noindent
(iii)  By Lemma \ref{claim1}(6), $\Big((b_{\half}y_0)(x_{\half}b_0)\Big)_0 -\frac 14 (b,x_{\half}) y_0b_0=-\Big(b_{\half}((x_{\half}b_0)y_0)\Big)_0,$ so (iii) follows from (ii).\qedhere

%
 \end{proof}
 \begin{proof}[{\bf Proof of Theorem \ref{ab12}(2ii)}]$ $
 \medskip

 \noindent
 The proof of this part is immediate from Lemma \ref{claim1}(7ii) and Lemma \ref{fact7}(5iii).
 \end{proof}

 \end{document}